 \newtheorem{thm}{Theorem}[section]
 \newtheorem{cor}[thm]{Corollary}
 \newtheorem{lem}[thm]{Lemma}
 \newtheorem{prop}[thm]{Proposition}
 \theoremstyle{definition}
 \newtheorem{defn}[thm]{Definition}
 \theoremstyle{remark}
 \newtheorem{rem}[thm]{Remark}
 \numberwithin{equation}{section}
\newcommand{\scal}[1]{\left<#1\right>}
\newcommand{\Hq}{\mathbb H}
\newcommand{\Sq}{\mathbb S}
\newcommand{\R}{\mathbb{R}}      
\newcommand{\C}{\mathbb{C}}
\title[On the Bargmann-Fock-Fueter and Bergman-Fueter integral transforms ]{On the Bargmann-Fock-Fueter and Bergman-Fueter integral transforms }
\author[K. Diki]{Kamal Diki}
\thanks{Kamal Diki : Marie Sklodowska-Curie fellow of the Istituto Nazionale di Alta Matematica }
\address{(KD) Politecnico di
Milano\\Dipartimento di Matematica\\Via E. Bonardi, 9\\20133 Milano,
Italy}
\email{kamal.diki@polimi.it}
\author[S. Krausshar]{Rolf S\"{o}ren Krausshar}
\address{(RSK) Fachgebiet Mathematik, Erziehungswissenschaftliche Fakultat, Universtat Erfurt, Nordh\"{a}user Str. 63, D-99089 Erfurt,
Germany }
\email{soeren.krausshar@uni-erfurt.de}
\author[I. Sabadini]{Irene Sabadini}
\address{(IS) Politecnico di
Milano\\Dipartimento di Matematica\\Via E. Bonardi, 9\\20133 Milano\\Italy}
\email{irene.sabadini@polimi.it}
\begin{document}
\maketitle
\begin{abstract}
This paper deals with some special integral transforms of Bargmann-Fock type in the setting of quaternionic valued slice hyperholomorphic and Cauchy-Fueter regular functions. The construction is based on the well-known Fueter mapping theorem. In particular, starting with the normalized Hermite functions we can construct an Appell system of quaternionic regular polynomials. The ranges of such integral transforms are quaternionic reproducing kernel Hilbert spaces of regular functions. New integral representations and generating functions in this quaternionic setting are obtained in both the Fock and Bergman cases.
 \end{abstract}

\noindent AMS Classification: Primary 30G35.

\noindent {\em Key words}: Bargmann-Fock-Fueter transform; Fock-Fueter kernel; Bergman-Fueter transform; Monogenic Appell sets; Quaternions.

\section{Introduction}
In 1880, the French mathematician Appell introduced a new class of polynomial sequences generalizing the well-known property satisfied by the classical monomials with respect to the real derivative, namely $\displaystyle \frac{d}{dx}P_n=nP_{n-1}$ see \cite{Appell1880}. A polynomial sequence $\lbrace{P_n}\rbrace_{n\geq 0}$ of degree $n$ satisfying such an identity with respect to a derivative is called an Appell set or an Appell sequence. Other significant and interesting examples of this class of polynomials are the well-known Hermite, Bernoulli and Euler polynomials. In \cite{Chihara1978,Sheffer} the authors followed a different approach to define an Appell set by requesting the identity $\displaystyle \frac{d}{dx}P_n=P_{n-1}$. The study of Appell sequences has also been performed in the setting of Clifford Analysis with respect to the hypercomplex derivative, see for example \cite{CMF2011,MF2007,Pena2011}.
Moreover, in the recent papers \cite{CSS2016,PSS2016} the authors introduced some special modules of monogenic functions of Bargmann-type in Clifford Analysis. This line of research opens some new research directions on Bargmann-Fock spaces and associated transforms in the setting of Clifford Analysis.
In this paper, we construct an Appell sequence of spherical monogenics in the right Fueter-Bargmann space over quaternions, denoted by $\mathcal{RB}(\Hq)$, and consisting of quaternionic Fueter regular functions that are square integrable with respect to the Gaussian measure. The main tool that we use is the Fueter mapping theorem which relates slice hyperholomorphic functions to Fueter regular ones through the Laplacian. More precisely, we apply the Fueter mapping on a special basis of the slice hyperholomorphic Fock space constructed in \cite{AlpayColomboSabadini2014} and obtain a set of homogeneous monogenic polynomials in the right  monogenic Bargmann space over the quaternions. This allows us to construct on the standard Hilbert space on the real line the so called Bargmann-Fock-Fueter integral transform whose range is a quaternionic reproducing kernel Hilbert space of Cauchy-Fueter regular functions. In particular, we believe that this paper may give a partial answer to Remark 4.6 in \cite{KMNQ2016} about Clifford coherent state transforms using the Fueter mapping theorem in the setting of quaternions. \\ In order to present our results, we collect some basic definitions and preliminaries in Section 2. In Section 3, we study how the Fueter mapping acts on special basis elements of the slice hyperholomorphic Fock space. Then, we show that the obtained polynomials constitute an Appell set in the Bargmann space of Cauchy-Fueter regular functions over the quaternions. In Section 4, the Fock-Fueter kernel is discussed and the Bargmann-Fock-Fueter integral transform is introduced and studied. Section 5 deals with explicit formulas for the slice hyperholomorphic Bergman kernel in some specific cases. Finally, in the last Section we treat a similar integral transform in the case of the Bergman spaces of slice hyperholomorphic functions on the unit ball, on the half space and on the unit half-ball on quaternions.
\section{Preliminaries}
Let $\lbrace{e_1,e_2, . . . , e_m}\rbrace$ be an orthonormal basis of the Euclidean vector space $\R^m$ in which we introduce a non-commutative product defined by the following multiplication law
$$e_ke_s + e_se_k = -2\delta_{k,s}, k, s= 1, . . . , m$$
where $\delta_{k,s}$ is the Kronecker symbol. The set $$\lbrace{e_A : A \subset\lbrace{1, . . . , m}\rbrace \text{ with }
e_A = e_{h_1}e_{h_2}...e_{h_r}, 1 \leq h_1 < ...< h_r \leq m, e_{\emptyset} = 1}\rbrace $$
forms a basis of the $2^m$-dimensional Clifford algebra $\R_m$ over $\R$. Let $\R^{m+1}$ be embedded in $\R_m$ by identifying
$(x_0, x_1,..., x_m) \in \R^{m+1}$ with the paravector $x=x_0+\underline{x}\in \R_m$. The conjugate of $x$ is given by $\bar{x} = x_0-\underline{x}$
and the norm $\vert{x}\vert$ of $x$ is defined by $\vert{x}\vert^2=x_0^2+...+x_m^2$. \\
For $m\geq 1$, the Euclidean Dirac operator on $\R^m$ is given by $$\displaystyle\partial_{\underline{x}}=\sum_{k=1}^me_k\partial_{x_k}.$$  The generalized Cauchy-Riemann operator (also known as Weyl operator) and its conjugate in $\R^{m+1}$ are given respectively by $$\displaystyle\partial:=\partial_{x_0}+\partial_{\underline{x}} \text{ and }  \displaystyle\overline{\partial}:=\partial_{x_0}-\partial_{\underline{x}}.$$ Notice that $$\overline{\partial}\partial=\partial\overline{\partial}=\Delta_{\R^{m+1}}$$ where $\Delta_{\R^{m+1}}$ stands for the usual Laplacian on the Euclidean space $\R^{m+1}$. Real differentiable functions on an open subset of $\R^{m+1}$ taking their values in $\R_m$ that are in the kernel of the generalized Cauchy-Riemann operator are called left monogenic or monogenic, for short. Moreover, for a monogenic function $f$ we have the following Leibniz rule, see e.g. \cite{Pena2008}
\begin{equation} \label{lb}
\displaystyle\partial_{\underline{x}}(\underline{x}f)=-mf-\underline{x}\partial_{\underline{x}}f-2\sum_{l=1}^mx_l\partial_{x_l}f.
 \end{equation} The latter formula will be very important for our calculations. In the particular case of quaternions the generalized Cauchy-Riemann operator in $\R^{m+1}$ becomes the Cauchy-Fueter operator and this leads to the theory of quaternionic Fueter regular functions. Specifically, we have the following
\begin{defn} Let $U\subset \Hq$ be an open set and let $f:U\longrightarrow \Hq$ be a real differentiable function. We say that $f$ is (left) Fueter regular or regular for short on $U$ if $$\partial f(q):=\displaystyle\left(\frac{\partial}{\partial x_0}+i\frac{\partial}{\partial x_1}+j\frac{\partial}{\partial x_2}+k\frac{\partial}{\partial x_3}\right)f(q)=0, \forall q\in U.$$
\\
The right linear space of Fueter regular functions will be denoted by $\mathcal{R}(U)$.
\end{defn}
 As customary, a Fueter regular polynomial of degree $k$ is called a quaternionic spherical monogenic of degree $k$.
\

For more details about the theory of quaternionic Fueter regular functions we refer the reader to, e.g. \cite{CSSS2004,GHS}. In 2006 a new approach to quaternionic regular functions was introduced  and then extensively studied in several directions, and it is nowadays widely developed, see \cite {ACS_book,CSS,CSS_book, GentiliSS}.

 We briefly revise here the basics of this function theory. Let  $\mathbb{S}=\lbrace{q\in{\Hq};q^2=-1}\rbrace$ be the unit sphere of imaginary units in $\mathbb H$.
   Note that any $q\in \Hq\setminus \R$ can be rewritten in a unique way as $q=x+I y$ for some real numbers $x$ and $y>0$, and imaginary unit $I\in \mathbb{S}$. For every given $I\in{\mathbb{S}}$, we define $\C_I = \mathbb{R}+\mathbb{R}I.$
It is isomorphic to the complex plane $\C$ so that it can be considered as a complex plane in $\Hq$ passing through $0$, $1$ and $I$. Their union is the whole space of quaternions $$\Hq=\underset{I\in{\mathbb{S}}}{\cup}\C_I =\underset{I\in{\mathbb{S}}}{\cup}(\mathbb{R}+\mathbb{R}I).$$ Then, we have the following
\begin{defn}
A real differentiable function $f: \Omega \longrightarrow \Hq$, on a given domain $\Omega\subset \Hq$, is said to be a (left) slice hyperholomorphic function if, for every $I\in \Sq$, the restriction $f_I$ to $\C_{I}=\R+I\R$, with variable $q=x+Iy$, is holomorphic on $\Omega_I := \Omega \cap \C_I$, that is it has continuous partial derivatives with respect to $x$ and $y$ and the function
$\overline{\partial_I} f : \Omega_I \longrightarrow \Hq$ defined by
$$
\overline{\partial_I} f(x+Iy):=
\dfrac{1}{2}\left(\frac{\partial }{\partial x}+I\frac{\partial }{\partial y}\right)f_I(x+yI)
$$
vanishes identically on $\Omega_I$. The set of slice hyperholomorphic functions will be denoted by $\mathcal{SR}(\Omega)$.
\end{defn}
An important result of this function theory is the following:
\begin{thm}[Representation formula]
Let $f\in{\mathcal{SR}(\Hq)}$ and $I,J\in{\mathbb{S}}$.\\
For all $q=x+yI\in{\Hq}$, we have
 $$
\displaystyle f(x+yI)= \alpha(x,y)+I\beta(x,y)
$$  where
$
\displaystyle \alpha(x,y)= \dfrac{1}{2}[f(x+yJ)+f(x-yJ)]$ and $\displaystyle\beta(x,y)=\frac{J}{2}[f(x-yJ)-f(x+yJ)]
$ Moreover, $\alpha$ and $\beta$ satisfy the Cauchy-Riemann conditions.

\end{thm}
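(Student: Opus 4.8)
The plan is to prove the two assertions separately: the explicit reconstruction formula $f(x+yI)=\alpha(x,y)+I\beta(x,y)$, and the fact that $\alpha,\beta$ solve the Cauchy--Riemann system $\partial_x\alpha-\partial_y\beta=0$, $\partial_y\alpha+\partial_x\beta=0$. It is convenient first to record that the claimed identity can be rewritten in the closed form $f(x+yI)=\tfrac12(1-IJ)f(x+yJ)+\tfrac12(1+IJ)f(x-yJ)$, which is just $\alpha+I\beta$ after collecting terms. This suggests the strategy: extract all the information from the single slice $\C_J$, check that the right-hand side defines a slice hyperholomorphic function agreeing with $f$ on $\C_J$, and then globalize to arbitrary $I\in\mathbb{S}$.

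First I would set $u(x,y):=f(x+yJ)$ and $w(x,y):=f(x-yJ)=u(x,-y)$, so that $\alpha=\tfrac12(u+w)$ and $\beta=\tfrac{J}{2}(w-u)$. Holomorphicity of the restriction $f_J$ means $\overline{\partial_J}f_J=0$, i.e. $\partial_x u+J\,\partial_y u=0$; differentiating $w(x,y)=u(x,-y)$ yields the reflected equation $\partial_x w-J\,\partial_y w=0$. Substituting $\partial_x u=-J\,\partial_y u$ and $\partial_x w=J\,\partial_y w$ into $\partial_x\alpha-\partial_y\beta$ and into $\partial_y\alpha+\partial_x\beta$ makes both expressions collapse to zero by the routine cancellation $J^2=-1$. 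This is a purely computational step and it already settles the ``Moreover'' part of the statement. It also shows that, for every $K\in\mathbb{S}$, the function $g(x+yK):=\alpha(x,y)+K\beta(x,y)$ satisfies $\overline{\partial_K}g=\tfrac12\big[(\partial_x\alpha-\partial_y\beta)+K(\partial_x\beta+\partial_y\alpha)\big]=0$, so $g\in\mathcal{SR}(\Hq)$.

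Next I would verify that $g$ coincides with $f$ on the slice $\C_J$. Evaluating at $K=J$ gives $\alpha+J\beta=\tfrac12(u+w)+\tfrac12(u-w)=u=f(x+yJ)$, and at $K=-J$ one gets $\alpha-J\beta=w=f(x-yJ)$; the case $y=0$ is immediate since $\beta(x,0)=0$ and $\alpha(x,0)=f(x)$. Thus $f$ and $g$ are two slice hyperholomorphic functions on $\Hq$ that agree on all of $\C_J$.

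The hard part is the final globalization: concluding $f\equiv g$ on $\Hq$. This is genuinely the crux, because the defining condition constrains $f$ only \emph{within} each slice and says nothing a priori about how the values on different imaginary units $x+yK$, for fixed $(x,y)$, are related. I would close this gap in one of two standard ways. The cleanest self-contained route uses that an entire left slice hyperholomorphic function admits a globally convergent expansion $f(q)=\sum_{n\ge0}q^n a_n$; writing $(x+yI)^n=A_n(x,y)+I\,B_n(x,y)$ with \emph{real} polynomials $A_n,B_n$ (exactly as for $(x+iy)^n$) and factoring the central real coefficients gives $f(x+yI)=\sum_n A_n(x,y)a_n+I\sum_n B_n(x,y)a_n$, where the two sums are independent of $I$; specializing $I=\pm J$ identifies them with $\alpha$ and $\beta$, which yields the formula for all $I$ at once. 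Alternatively, one invokes the Identity Principle for slice hyperholomorphic functions (itself obtained from the Splitting Lemma and the classical complex identity theorem) to pass directly from $f|_{\C_J}=g|_{\C_J}$ to $f=g$. Either way, the essential input is a background result on the slice structure, and that is where the real content of the theorem resides.
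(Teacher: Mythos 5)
The paper does not actually prove this statement: it is recalled in the Preliminaries as a known result of the theory of slice hyperholomorphic functions (it is the classical Representation Formula from the Colombo--Sabadini--Struppa / Gentili--Struppa literature), so there is no in-paper argument to compare against. Judged on its own, your proof is correct and is essentially the standard one. The computation of the Cauchy--Riemann system for $\alpha,\beta$ from $\partial_x u+J\partial_y u=0$ and its reflection is right (note that it uses that $J$ is a constant commuting with the real partial derivatives, and $J^2=-1$), and the verification that $g(x+yK):=\alpha(x,y)+K\beta(x,y)$ is slice hyperholomorphic and agrees with $f$ on $\C_J$ is correct; you should add one sentence observing that $g$ is well defined, i.e.\ that $\alpha(x,-y)=\alpha(x,y)$ and $\beta(x,-y)=-\beta(x,y)$ so that the two writings $x+yK=x+(-y)(-K)$ give the same value, and that $\beta(x,0)=0$ handles real points. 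You correctly identify the globalization as the crux; both of your proposed closures work and are non-circular. The power-series route is valid on $\Hq$ because the coefficients $a_n=\frac{1}{n!}\partial_x^n f(0)$ are computed at the real point $0$ and hence are slice-independent, and $(x+yI)^n=A_n(x,y)+IB_n(x,y)$ with real $A_n,B_n$ independent of $I$. The Identity Principle route is also legitimate, since its standard proof uses only the Splitting Lemma, the complex identity theorem on the slice $\C_J$ (which forces $f-g\equiv 0$ on $\C_J$ and in particular on $\R$), and then the complex identity theorem again on each other slice $\C_K$ through the real axis --- no appeal to the Representation Formula itself. Either version would be accepted as a complete proof.
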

The two theories of slice hyperholomorphic and Fueter regular functions are related  by the Fueter mapping theorem, see \cite{ColomboSabadiniSommen2010}. We briefly recall below the variation of this result that we will use later and we refer the reader to \cite{Qian2014} for more information.
\begin{thm}[Fueter mapping theorem, see \cite{ColomboSabadiniSommen2010}]
Let $U$ be an axially symmetric set in $\mathbb H$ and let $f:U\subset \Hq\longrightarrow \Hq$ be a slice regular function of the form $f(x+yI)=\alpha(x,y)+I\beta(x,y)$,where $\alpha(x,y)$ and $\beta(x,y)$ are quaternionic-valued functions such that $\alpha(x,-y)=\alpha(x,y)$, $\beta(x,-y)=-\beta(x,y)$ and satisfying the Cauchy-Riemann system. Then, the function $$\overset{\sim}f(x_0,\vert{\underline{q}}\vert)=\displaystyle \Delta\left(\alpha(x_0,\vert{\underline{q}}\vert)+\frac{\underline{q}}{\vert{\underline{q}}\vert}\beta(x_0,\vert{\underline{q}}\vert)\right)$$
is Fueter regular.
\end{thm}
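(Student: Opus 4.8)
The plan is to verify directly that the Cauchy-Fueter operator $\partial=\partial_{x_0}+\partial_{\underline{q}}$, where $\partial_{\underline{q}}=i\partial_{x_1}+j\partial_{x_2}+k\partial_{x_3}$ is the Dirac part, annihilates $\widetilde f$. Set $r=|\underline{q}|$ and $\omega=\underline{q}/r$, and write $g(q)=\alpha(x_0,r)+\omega\,\beta(x_0,r)$ for the axial extension, so that $\widetilde f=\Delta g$ by definition. The first reduction I would make exploits that $\partial$ and $\Delta$ are constant-coefficient operators, hence commute, together with the factorization $\Delta=\partial\overline{\partial}$ recalled in Section~2 (with $m=3$). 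These give
$$
\partial\widetilde f=\partial\Delta g=\Delta(\partial g),
$$
so it suffices to compute $\partial g$ once and then show the result is harmonic.

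First I would compute $\partial g$ term by term. The scalar part contributes $\partial_{x_0}\alpha=\alpha_{x_0}$ and, by the chain rule with $\partial_{x_l}r=x_l/r$, the vector term $\partial_{\underline{q}}\alpha=\omega\,\alpha_r$. For the genuinely noncommutative piece I would write $\omega\beta=\underline{q}\,(\beta/r)$ and apply the Leibniz identity \eqref{lb} with $f=\beta/r$; using $\underline{q}^{2}=-r^{2}$ and $\sum_l x_l\partial_{x_l}(\beta/r)=r\,\partial_r(\beta/r)$, the three contributions $-mf$, $-\underline{q}\,\partial_{\underline{q}}f$ and the radial term combine to $\partial_{\underline{q}}(\omega\beta)=-\beta_r-\tfrac{2}{r}\beta$. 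Collecting everything yields
$$
\partial g=\bigl(\alpha_{x_0}-\beta_r-\tfrac{2}{r}\beta\bigr)+\omega\,\bigl(\beta_{x_0}+\alpha_r\bigr).
$$
Invoking the Cauchy-Riemann system $\alpha_{x_0}=\beta_r$ and $\alpha_r=-\beta_{x_0}$ satisfied by $\alpha,\beta$, both brackets collapse and I am left with the purely scalar radial expression $\partial g=-\tfrac{2}{r}\beta$.

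It then remains to check $\Delta(\partial g)=0$. For a function of $(x_0,r)$ on $\R^{4}$ the Laplacian reads $\Delta F=F_{x_0x_0}+F_{rr}+\tfrac{2}{r}F_r$, and substituting $F=-2\beta/r$ causes the $1/r^{2}$ and $1/r^{3}$ terms to cancel, leaving $-\tfrac{2}{r}\bigl(\beta_{x_0x_0}+\beta_{rr}\bigr)$. Differentiating the Cauchy-Riemann equations once more gives $\beta_{x_0x_0}=-\beta_{rr}$, i.e.\ $\beta$ is harmonic in $(x_0,r)$, so this quantity vanishes and $\partial\widetilde f=\Delta(\partial g)=0$.

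The main obstacle will be the vector-part computation $\partial_{\underline{q}}(\omega\beta)$, which is exactly where noncommutativity enters; the role of the Leibniz rule \eqref{lb} is precisely to organize the calculation so that the $\underline{q}$- and $\omega$-factors recombine through $\underline{q}^{2}=-r^{2}$ and the surviving scalar terms match the Cauchy-Riemann relations. A secondary point to treat with care is the real axis $r=0$, where $\omega$ is undefined: there one uses the evenness of $\alpha$ and oddness of $\beta$ in $y$ to argue that $g$, and hence $\widetilde f$, extend smoothly across $\underline{q}=0$, so that the identity $\partial\widetilde f=0$ holds on all of the axially symmetric set $U$ by continuity.
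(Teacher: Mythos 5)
The paper does not prove this theorem at all: it is quoted verbatim from \cite{ColomboSabadiniSommen2010} as a preliminary, so there is no internal argument to compare against. Your direct verification is correct and self-contained, and the computations check out: the identity \eqref{lb} applied to $\beta/r$ gives $\partial_{\underline{x}}(\underline{x}\,\beta/r)=-3\beta/r+(\beta_r-\beta/r)+(-2\beta_r+2\beta/r)=-\beta_r-\tfrac{2}{r}\beta$, the Cauchy--Riemann relations $\alpha_{x_0}=\beta_r$, $\alpha_r=-\beta_{x_0}$ kill the remaining terms, and the axial Laplacian of $-2\beta/r$ reduces to $-\tfrac{2}{r}(\beta_{x_0x_0}+\beta_{rr})$, which vanishes by differentiating the same relations. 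Two points deserve an explicit word. First, the paper states \eqref{lb} ``for a monogenic function $f$,'' whereas you apply it to $\beta/r$, which is not monogenic; you should remark that the identity is purely algebraic (product rule plus $e_ke_l+e_le_k=-2\delta_{kl}$) and holds for any $C^1$ Clifford-valued function, so the monogenicity hypothesis in the paper's formulation is superfluous --- otherwise this step looks unjustified. Second, the continuity argument at $r=0$ should be backed by the observation that slice regular functions on axially symmetric domains are real analytic, so the parity conditions on $\alpha,\beta$ do make $g$, hence $\widetilde f$ and $\partial\widetilde f$, smooth across the real axis. With those two clarifications the proof is complete, and it is the standard direct route (reduce $\partial\Delta g$ to $\Delta\partial g$ and exploit the axial form), essentially the Fueter--Sce computation that the cited reference carries out in integral/operator form.
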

\begin{rem}
The representation formula combined with this version of the Fueter mapping theorem shows that to each slice regular function $f$ on $\Hq$ is associated a Cauchy-Fueter regular function given by $\overset{\sim}f(q)=\Delta f(q)$ where $\Delta$ denotes the Laplacian on the Euclidean space $\R^4$. Below, we will also denote the Fueter mapping by $$\tau:\mathcal{SR}(U)\rightarrow\mathcal{R}(U), \text{ } f\longmapsto \tau(f)=\overset{\sim}f.$$
\end{rem}
 Finally, we recall the definition of the slice hyperholomorphic Fock space $\mathcal{F}_{Slice}(\Hq)$ and the right Fueter-Bargmann space $\mathcal{RB}(\Hq)$ in the context of quaternions. These two spaces were introduced in the quaternionic and Clifford Analysis setting in the papers \cite{AlpayColomboSabadini2014,CSS2016,PSS2016}.
\paragraph{}
The authors of \cite{AlpayColomboSabadini2014} defined the slice hyperholomorphic quaternionic Fock space $\mathcal{F}_{Slice}(\Hq)$, for a given $I\in{\mathbb{S}}$  to be
$$\mathcal{F}_{Slice}(\Hq):=\lbrace{f\in{\mathcal{SR}(\Hq); \, \displaystyle  \frac{1}{\pi} \int_{\C_I}\vert{f_I(p)}\vert^2 e^{-\vert{p}\vert^2}d\lambda_I(p) <\infty}}\rbrace,$$
 where $f_I = f|_{\C_I}$ and $d\lambda_I(p)=dxdy$ for $p=x+yI$.
This right $\Hq$-vector space is endowed with the following inner product
\begin{equation}\label{spfg}
\scal{f,g}_{\mathcal{F}_{Slice}(\Hq)} = \frac{1}{\pi}\int_{\C_I}\overline{g_I(p)}f_I(p)e^{-\vert{p}\vert^2} d\lambda_I(p),
\end{equation}
 where $f,g\in{\mathcal{F}_{Slice}(\Hq)}$, so that the associated norm is given by
      $$\Vert{f}\Vert_{\mathcal{F}_{Slice}(\Hq)}^2= \frac{1}{\pi}\int_{\C_I}\vert{f_I(p)}\vert^2 e^{-\vert{p}\vert^2}d\lambda_I(p).$$
 Note that all these norms, which depend on $I\in\mathbb S$, are equivalent. In fact, it was shown in \cite{AlpayColomboSabadini2014} that $\mathcal{F}_{Slice}(\Hq)$ does not depend on the choice of the imaginary unit $I\in \mathbb{S}$. Moreover, the monomials $f_n(p):=p^n$; $n=0,1,2, \cdots,$ form an orthogonal basis of the space
 with
 \begin{equation}\label{spmonomials}
 \scal{f_m,f_n}_{\mathcal{F}_{Slice}(\Hq)}  = m!\delta_{m,n}.
\end{equation}
On the other hand, in Section 5 of \cite{PSS2016} and Section 3 of \cite{CSS2016} the real monogenic Bargmann module on the Euclidean space $\R^m$ was defined to be the module consisting of solutions of the $s$-th power of the Dirac operator that are square integrable on $\R^m$ with respect to a Gaussian measure. In this paper we work with a similar definition for the quaternions by replacing the $s$-th power of the Dirac operator by the Cauchy-Fueter operator. So, we call the right Fueter-Bargmann space on quaternions  the space defined by

$$\mathcal{RB}(\Hq):=\lbrace{f\in{\mathcal{R}(\Hq); \, \displaystyle  \frac{1}{\pi^2} \int_{\Hq}\vert{f(q)}\vert^2 e^{-\vert{q}\vert^2}d\lambda(q) <\infty}}\rbrace,$$
where $d\lambda$ denotes the usual Lebesgue measure on the Euclidean vector space $\R^4$.
\section{The action of the Fueter mapping on the quaternionic monomials}
The main goal of this section is to apply the Fueter mapping on the quaternionic monomials forming an orthogonal basis of the slice hyperholomorphic Fock space  $\mathcal{F}_{Slice}(\Hq)$ and to get an Appell set of $\mathcal{RB}(\Hq)$. A different proof of this result using Cauchy-Kowalevski extension arguments can be found in \cite{NG2009}.
\\ \\ First, we need a lemma that describes the action of the Cauchy-Fueter operator on the quaternionic monomials $f_n(q)=q^n$:
\begin{lem}[see \cite{Begehr1999}] \label{DiracAct}
For all $n\geq 2$, we have
$$\partial f_n(q)=-2\displaystyle\sum_{k=1}^{n}q^{n-k}\overline{q}^{k-1}.$$
\end{lem}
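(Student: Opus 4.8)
The plan is to compute the action of the Cauchy-Fueter operator $\partial = \partial_{x_0} + i\partial_{x_1} + j\partial_{x_2} + k\partial_{x_3}$ on the monomial $f_n(q) = q^n$ by induction on $n$, exploiting the product rule for $\partial$ combined with the fact that $q$ itself is not Fueter regular. The key preliminary observation is that $\partial(q) = \partial_{x_0}(x_0+\underline{q}) + \partial_{\underline{q}}(x_0+\underline{q})$, where $\underline{q} = ix_1+jx_2+kx_3$; a direct computation gives $\partial_{x_0}q = 1$ and $\partial_{\underline{q}}\,\underline{q} = \sum_{l=1}^{3} e_l\partial_{x_l}\underline{q} = -3$, so $\partial(q) = 1 - 3 = -2$, which is exactly the $n=1$ analogue and motivates the constant $-2$ appearing in the statement.

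First I would establish a Leibniz-type identity for $\partial$ acting on a product $q\cdot g(q)$ where $g$ is a function of $q$. Since $\partial$ is a first-order operator with quaternionic coefficients and $q$ does not commute with the imaginary units, the product rule is not the naive one; I would write $\partial(qg) = (\partial q)g + \text{(correction terms)}$ using the Leibniz rule \eqref{lb} specialized to $m=3$. In the notation of \eqref{lb} one has $\partial_{\underline{q}}(\underline{q}\,g) = -3g - \underline{q}\,\partial_{\underline{q}}g - 2\sum_{l=1}^{3}x_l\partial_{x_l}g$, and combining this with the $\partial_{x_0}$ part yields a manageable expression for $\partial(qg)$ in terms of $g$, $\partial g$, and the radial (Euler) operator $\sum x_l\partial_{x_l}$.

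Next I would set up the induction. Writing $f_n = q\cdot f_{n-1}$ and applying the product identity above, I expect the radial derivative terms and the $\partial f_{n-1}$ terms to reorganize via the inductive hypothesis $\partial f_{n-1} = -2\sum_{k=1}^{n-1} q^{n-1-k}\overline{q}^{\,k-1}$. The appearance of $\overline{q}$ is natural here because the Euler operator $\sum x_l\partial_{x_l}$ acting on powers of $q$ produces terms that, together with $\partial_{x_0}$, assemble into conjugates; indeed $q + \overline{q} = 2x_0$ and $q\overline{q} = |q|^2$ are the building blocks that let one convert radial derivatives into the symmetric sums $\sum_{k} q^{n-k}\overline{q}^{\,k-1}$. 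I would carry the algebra one index at a time, checking that the coefficient $-2$ propagates and that the sum acquires exactly one new term in each step.

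The main obstacle I anticipate is the noncommutativity bookkeeping: because $q$ and $\overline{q}$ do not commute with the imaginary units $i,j,k$ individually (though $q$ and $\overline{q}$ commute with each other), I must track the ordering of factors carefully when applying $\partial_{\underline{q}}$ and the Euler operator to the mixed monomials $q^{a}\overline{q}^{\,b}$. The cleanest route is probably to verify a closed formula for $\partial(q^{a}\overline{q}^{\,b})$ first and then specialize, or alternatively to use the representation formula together with the Fueter mapping theorem to reduce the quaternionic computation to a one-variable identity in $\C_I$; I would keep the slice-regular viewpoint in reserve as a consistency check, since $\tau(f_n) = \Delta(q^n)$ gives an independent handle on the result. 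Verifying the base case $n=2$ by hand against both approaches should pin down the normalization before pushing the induction through.
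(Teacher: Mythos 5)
Your proposal is correct, but note that the paper does not actually prove this lemma: it imports it wholesale from \cite{Begehr1999}, so there is no internal argument to compare against. What you propose is a self-contained induction, and it does close. Writing $q=x_0+\underline{x}$ and applying \eqref{lb} with $m=3$ to an arbitrary differentiable $g$ (together with the trivial $\partial_{x_0}$-part of the product rule) gives the key identity
\begin{equation*}
\partial(qg)=-2g+q\,\partial_{x_0}g+\overline{q}\,\partial_{\underline{x}}g-2\sum_{l=1}^{3}x_l\partial_{x_l}g,
\end{equation*}
and taking $g=f_{n-1}=q^{n-1}$, using $\partial_{x_0}q^{n-1}=(n-1)q^{n-2}$, Euler's homogeneity relation $\sum_{l=0}^{3}x_l\partial_{x_l}q^{n-1}=(n-1)q^{n-1}$, the identity $q+\overline{q}=2x_0$, the commutation of $q$ with $\overline{q}$, and the inductive hypothesis $\partial_{\underline{x}}q^{n-1}=\partial q^{n-1}-(n-1)q^{n-2}$, all the terms carrying the factor $(n-1)$ cancel and one is left with exactly $-2q^{n-1}-2\sum_{k=1}^{n-1}q^{n-1-k}\overline{q}^{\,k}=-2\sum_{k=1}^{n}q^{n-k}\overline{q}^{\,k-1}$, as required. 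Two small points to make explicit when you write this up: (1) the paper states \eqref{lb} ``for a monogenic function $f$,'' but you are applying it to $q^{n-1}$, which is not monogenic; this is harmless because the derivation of \eqref{lb} is purely algebraic and valid for any $C^1$ function, but you should say so (or rederive it), since otherwise the step looks illegitimate; (2) the ``slice-regular consistency check'' via $\tau(f_n)=\Delta q^n=\overline{\partial}\partial q^n$ is a fine sanity check against Theorem 3.2, but it cannot replace the computation of $\partial q^n$ itself, since the Fueter map only sees the Laplacian. Your route has the merit of making the paper self-contained and of reusing precisely the ingredients (the $m=3$ Leibniz rule and the Euler operator) that the authors deploy anyway in Lemmas 3.5 and 3.6.
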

Then, we prove the following
\begin{thm} \label{FueterAct}
For all $n\geq 2$, we have
$$\tau [f_n](q)=\overset{\sim}f_n(q)=-4\displaystyle\sum_{k=1}^{n-1}(n-k)q^{n-k-1}\overline{q}^{k-1}.$$
\end{thm}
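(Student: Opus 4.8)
The plan is to use the factorization $\Delta=\overline{\partial}\,\partial$ recorded in Section~2 together with the expression for $\partial f_n$ supplied by Lemma~\ref{DiracAct}. Since the Fueter map is $\tau(f_n)=\widetilde{f_n}=\Delta f_n$ (the Remark after the Fueter mapping theorem), and $\Delta=\overline{\partial}\,\partial$, I would first apply $\partial$ via the lemma and then $\overline{\partial}$, writing
\[
\widetilde{f_n}(q)=\overline{\partial}\bigl(\partial f_n(q)\bigr)=-2\,\overline{\partial}\,S_n(q),\qquad S_n(q):=\sum_{k=1}^{n}q^{\,n-k}\bq^{\,k-1}.
\]
Everything then reduces to computing $\overline{\partial}S_n$.

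The key observation, which is what makes the argument tractable, is that $q$ and $\bq$ commute, since $q\bq=\bq q=|q|^2\in\R$. Hence $S_n$ is the symmetric sum $\sum_{a+b=n-1}q^{a}\bq^{\,b}$, and expanding $q=x_0+\underline q$, $\bq=x_0-\underline q$ with $\underline q^{\,2}=-r^2$ (where $r:=|\underline q|$) shows that all odd powers of $\underline q$ cancel, so that $S_n$ is in fact \emph{real (scalar) valued} and depends on the imaginary part only through $r$. I would record this as a short preliminary step, because the scalar, radial nature of $S_n$ is precisely what removes every noncommutativity issue from the subsequent application of $\overline{\partial}$: for a scalar function that is radial in $\underline q$ one has the elementary identity $\partial_{\underline x}S_n=\tfrac{\underline q}{r}\,\partial_r S_n$, where $\partial_{\underline x}=i\partial_{x_1}+j\partial_{x_2}+k\partial_{x_3}$ is the Dirac part of $\overline{\partial}=\partial_{x_0}-\partial_{\underline x}$.

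Next I would treat $q$ and $\bq$ as independent commuting variables, which is legitimate here since $\partial_{x_0}q=\partial_{x_0}\bq=1$, $\partial_r q=\underline q/r$, $\partial_r\bq=-\underline q/r$, and $q,\bq,\underline q$ all mutually commute. This gives $\partial_{x_0}S_n=(\partial_q+\partial_{\bq})S_n$ and $\partial_r S_n=\tfrac{\underline q}{r}(\partial_q-\partial_{\bq})S_n$. Substituting into $\overline{\partial}S_n=\partial_{x_0}S_n-\tfrac{\underline q}{r}\,\partial_r S_n$ and using $\underline q^{\,2}=-r^2$ collapses the two terms into the single clean identity
\[
\overline{\partial}S_n=(\partial_q+\partial_{\bq})S_n+(\partial_q-\partial_{\bq})S_n=2\,\partial_q S_n .
\]
Evaluating $\partial_q S_n=\sum_{a=1}^{n-1}a\,q^{\,a-1}\bq^{\,n-1-a}$ then yields
\[
\widetilde{f_n}=-2\,\overline{\partial}S_n=-4\,\partial_q S_n=-4\sum_{a=1}^{n-1}a\,q^{\,a-1}\bq^{\,n-1-a},
\]
and the substitution $k=n-a$ turns this into exactly $-4\sum_{k=1}^{n-1}(n-k)q^{\,n-k-1}\bq^{\,k-1}$.

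The only genuine subtlety, and the step I would be most careful to justify, is the reduction to the scalar radial function $S_n$ and the consequent identity $\overline{\partial}S_n=2\,\partial_q S_n$; once this is established the remainder is a one-line reindexing. As a sanity check one verifies the two smallest cases: $S_2=2x_0$ gives $\widetilde{f_2}=-4$, while $S_3=3x_0^2-r^2$ gives $\widetilde{f_3}=-8q-4\bq$, both matching the claimed formula.
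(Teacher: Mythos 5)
Your proof is correct, but it follows a genuinely different route from the paper's. The paper proves the formula by induction on $n$: starting from $f_{n+1}=qf_n$ and the classical Leibniz rule for second derivatives it derives the recursion $\widetilde{f}_{n+1}=2\partial f_n+q\widetilde{f}_n$, and then feeds in Lemma~\ref{DiracAct} together with the induction hypothesis. You instead compute $\Delta f_n=\overline{\partial}(\partial f_n)$ in one pass: Lemma~\ref{DiracAct} gives $\partial f_n=-2S_n$ with $S_n=\sum_{a+b=n-1}q^a\bq^{\,b}$, and your key observation is that this symmetric sum is scalar-valued and radial in $\underline{q}$, so that $\overline{\partial}S_n=2\partial_q S_n$ after the chain rule through $q=x_0+r\omega$, $\bq=x_0-r\omega$ (all of which lives in the commutative slice $\R+\R\omega$, so the formal manipulation is legitimate); I checked the computation and the reindexing, and your sanity checks $S_2=2x_0$, $S_3=3x_0^2-r^2$ come out right. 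The trade-off: the paper's induction is entirely elementary but requires the answer to be guessed in advance, whereas your argument derives the closed form directly, and the identity $\overline{\partial}S=2\partial_q S$ for symmetric scalar polynomials in $q,\bq$ is a reusable device --- it is essentially the same mechanism the paper later reconstructs by hand via the Euler operator in Lemmas~\ref{Euleraction} and~\ref{l2}, so your packaging would actually streamline that part of Section~3 as well. The one point you rightly flag as needing care, the reduction to a scalar radial $S_n$ before invoking $\partial_{\underline{x}}S_n=\tfrac{\underline{q}}{r}\partial_r S_n$, is adequately justified by the symmetry of the sum under $\underline{q}\mapsto-\underline{q}$.
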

\begin{proof} Let $q=x_0+x_1i+x_2j+x_3k$, thanks to the quaternions  multiplication rules we have $$f_2(q)=q^2=x_0^2-x_1^2-x_2^2-x_3^2+2x_0(x_1i+x_2j+x_3k)$$ It is easy to check that $\overset{\sim}f_2(q)=-4$, so the formula holds for $n=2$.\\
  Let $n\geq 2$. We suppose the proposition is true for $n$ and we show that $$\overset{\sim}f_{n+1}(q)=-4\displaystyle\sum_{l=1}^{n}(n+1-k)q^{n-k}\overline{q}^{k-1}.$$
  Indeed, we have $\overset{\sim}f_{n+1}=\Delta f_{n+1}$ and $f_{n+1}(q)=qf_n(q).$ Therefore, applying the classical Leibniz rule we get the following system

  \[
\left \{
\begin{array}{c @{=} c}
    \displaystyle\frac{\partial^2}{\partial x_{0}^2}f_{n+1}(q) &\displaystyle 2\frac{\partial}{\partial x_0}f_n(q)+q\frac{\partial^2}{\partial x_{0}^2}f_n(q) \\
    \displaystyle \frac{\partial^2}{\partial x_{1}^2}f_{n+1}(q) & \displaystyle 2i\frac{\partial}{\partial x_1}f_n(q)+q\frac{\partial^2}{\partial x_{1}^2}f_n(q) \\
    \displaystyle  \frac{\partial^2}{\partial x_{2}^2}f_{n+1}(q) & \displaystyle 2j\frac{\partial}{\partial x_2}f_n(q)+q\frac{\partial^2}{\partial x_{2}^2}f_n(q) \\
      \displaystyle \frac{\partial^2}{\partial x_{3}^2}f_{n+1}(q) & \displaystyle 2k\frac{\partial}{\partial x_3}f_n(q)+q\frac{\partial^2}{\partial x_{3}^2}f_n(q)
\end{array}
\right.
\] Thus, by adding both sides of the system we obtain
 $$\overset{\sim}f_{n+1}=\Delta f_{n+1}=2\partial f_n+q\Delta f_n=2\partial f_n+q\overset{\sim}f_{n}$$
Then, thanks to Lemma \ref{DiracAct} combined with the induction hypothesis we obtain $$\overset{\sim}f_{n+1}(q)=-4\displaystyle\sum_{k=1}^{n}(n+1-k)q^{n-k}\overline{q}^{k-1}.$$
This completes the proof.
  \end{proof}

  \begin{prop}
For all $n\geq 2$, we have $\overset{\sim}f_{n}\in \mathcal{RB}(\Hq)$.
  \end{prop}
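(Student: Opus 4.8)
The plan is to verify the two defining requirements of membership in $\mathcal{RB}(\Hq)$ separately: that $\overset{\sim}f_n$ is Fueter regular, and that it is square integrable against the Gaussian weight $e^{-\vert q\vert^2}$ on $\R^4$. The first requirement is essentially free. The monomial $f_n(q)=q^n$ is slice hyperholomorphic, so by the Fueter mapping theorem its image $\overset{\sim}f_n=\tau[f_n]=\Delta f_n$ lies in $\mathcal{R}(\Hq)$; equivalently, one reads off from Theorem~\ref{FueterAct} that $\overset{\sim}f_n$ is an explicit polynomial which, being in the range of $\tau$, is a spherical monogenic. Thus the entire content of the proposition is the integrability estimate.

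For the integrability I would start from the closed form in Theorem~\ref{FueterAct},
$$\overset{\sim}f_n(q)=-4\sum_{k=1}^{n-1}(n-k)q^{n-k-1}\overline{q}^{\,k-1},$$
and observe that every summand is homogeneous of degree $(n-k-1)+(k-1)=n-2$, so that $\overset{\sim}f_n$ is a homogeneous polynomial of degree $n-2$. Applying the triangle inequality together with $\vert q\vert=\vert\overline{q}\vert$ gives the pointwise bound
$$\vert\overset{\sim}f_n(q)\vert\leq 4\vert q\vert^{n-2}\sum_{k=1}^{n-1}(n-k)=2n(n-1)\vert q\vert^{n-2},$$
where I have used $\sum_{k=1}^{n-1}(n-k)=\tfrac{n(n-1)}{2}$.

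The final step is to check that $\vert q\vert^{2(n-2)}$ is integrable against the Gaussian measure on $\R^4$. Squaring the bound and integrating,
$$\frac{1}{\pi^2}\int_{\Hq}\vert\overset{\sim}f_n(q)\vert^2 e^{-\vert q\vert^2}\,d\lambda(q)\leq \frac{[2n(n-1)]^2}{\pi^2}\int_{\R^4}\vert q\vert^{2(n-2)}e^{-\vert q\vert^2}\,d\lambda(q),$$
and the right-hand integral is a standard Gaussian moment: passing to polar coordinates in $\R^4$ (where the unit sphere $S^3$ has measure $2\pi^2$) and substituting $t=r^2$ reduces it to $\pi^2\,\Gamma(n)=\pi^2(n-1)!$, which is finite. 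Hence the norm is finite and $\overset{\sim}f_n\in\mathcal{RB}(\Hq)$.

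I expect no genuine obstacle here, since the statement is the elementary fact that a polynomial is square integrable against a Gaussian weight. The only point requiring care is to recall that membership in $\mathcal{RB}(\Hq)$ demands Fueter regularity \emph{in addition} to square integrability; this is supplied immediately by the Fueter mapping theorem applied to the slice hyperholomorphic monomial $f_n$, so it need not be reproved. The remaining work, namely the triangle-inequality bound and the computation of the radial moment integral, is routine bookkeeping.
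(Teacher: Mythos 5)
Your proof is correct and follows essentially the same route as the paper: Fueter regularity from the mapping theorem, the triangle-inequality bound $\vert\overset{\sim}f_n(q)\vert\leq 2n(n-1)\vert q\vert^{n-2}$, and then square integrability against the Gaussian. The only difference is cosmetic --- you evaluate the radial moment explicitly as $\pi^2(n-1)!$, where the paper simply invokes the square integrability of the monomials $f_{n-2}$.
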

  \begin{proof}
 First of all, by the Fueter mapping theorem the functions $\overset{\sim}f_{n}$ are monogenic.
 We now show that for $n\geq 2$, we have $$\displaystyle \int_{\Hq}\vert{\overset{\sim}f_{n}(q)}\vert^2e^{-\vert{q}\vert^2}d\lambda(q)<\infty.$$
Indeed, we have \begin{align*}
\displaystyle \left|\overset{\sim}f_{n}(q)\right|&=4 \left|\sum_{k=1}^{n-1}(n-k)q^{n-k-1}\overline{q}^{k-1}\right|
 \\& \leq 4 \sum_{k=1}^{n-1}(n-k)\left|q\right|^{n-2}
 .
 \end{align*}
 And since $\displaystyle\sum_{k=1}^{n-1}(n-k)=\frac{(n-1)n}{2}$ we get the following estimate $$\vert{\overset{\sim}f_n(q)}\vert\leq 2(n-1)n \vert{q}\vert^{n-2}$$
 Hence, for all $n\geq 2,$ we have $\Vert{\overset{\sim}f_n}\Vert_{\mathcal{RB}(\Hq)}\leq 2n(n-1)\Vert{f_{n-2}}\Vert_{L^2(\Hq)}$. The proof is completed since the quaternionic monomials are square integrable with respect to the Gaussian measure on $\Hq$.

  \end{proof}
  \begin{rem}
  Let $n\geq 2$ and $k\geq 0$. Then
 \begin{enumerate}

 \item The functions $\overset{\sim}f_n$ are spherical monogenics of degree $n-2$.
 \item $\overset{\sim}f_{n+1}=2\partial f_n+q\overset{\sim}f_{n}$.
 \item  $\overset{\sim}f_{k+2}(q)=-4\displaystyle\sum_{j=0}^{k}(k+1-j)q^{k-j}\overline{q}^{j}$.
 \end{enumerate}
  \end{rem}
 As a consequence we obtain an Appell set of spherical monogenics in
$\mathcal{RB}(\Hq)$. To prove this fact we need some preliminary lemmas.
\begin{lem} \label{l1} Let $f:\Hq\longrightarrow \Hq$ be a Fueter regular function. Then, $$\displaystyle\overline{\partial}(qf)=4f+2\sum_{l=0}^3x_l\partial_{x_l}f.$$
\end{lem}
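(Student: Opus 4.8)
The plan is to reduce the identity to a direct computation that feeds into the Leibniz rule \eqref{lb} recorded in the preliminaries, the regularity hypothesis $\partial f=0$ being used to cancel the terms that would otherwise obstruct the result. First I would split the quaternionic variable into its scalar and vector parts, $q=x_0+\underline{q}$ with $\underline{q}=x_1 i+x_2 j+x_3 k$, and use the linearity of $\overline{\partial}=\partial_{x_0}-\partial_{\underline{x}}$ to write $\overline{\partial}(qf)=\overline{\partial}(x_0 f)+\overline{\partial}(\underline{q}f)$, treating the two summands separately.

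For the scalar part the ordinary product rule gives $\overline{\partial}(x_0 f)=f+x_0\overline{\partial}f$, since $x_0$ is real and $\partial_{x_k}x_0=0$ for $k\geq 1$. Here I would rewrite regularity in the form $\overline{\partial}f=2\partial_{x_0}f$: indeed $\partial f=0$ forces $\partial_{\underline{x}}f=-\partial_{x_0}f$, whence $\overline{\partial}f=\partial_{x_0}f-\partial_{\underline{x}}f=2\partial_{x_0}f$. This yields $\overline{\partial}(x_0 f)=f+2x_0\partial_{x_0}f$.

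The vector part is where the real work lies, and where the non-commutativity of $i,j,k$ must be handled with care; this is the main obstacle. I would expand $\overline{\partial}(\underline{q}f)=\partial_{x_0}(\underline{q}f)-\partial_{\underline{x}}(\underline{q}f)=\underline{q}\,\partial_{x_0}f-\partial_{\underline{x}}(\underline{q}f)$ and feed the last term directly into \eqref{lb} with $m=3$ and $\underline{x}=\underline{q}$ (legitimate, since $i,j,k$ satisfy exactly the anticommutation relations $e_ke_s+e_se_k=-2\delta_{k,s}$ that \eqref{lb} rests on), obtaining $\partial_{\underline{x}}(\underline{q}f)=-3f-\underline{q}\,\partial_{\underline{x}}f-2\sum_{l=1}^3 x_l\partial_{x_l}f$. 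Substituting and collecting the two terms that carry the factor $\underline{q}$ produces $\underline{q}(\partial_{x_0}+\partial_{\underline{x}})f=\underline{q}\,\partial f$, which vanishes by regularity, leaving $\overline{\partial}(\underline{q}f)=3f+2\sum_{l=1}^3 x_l\partial_{x_l}f$.

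Finally I would add the two contributions: the scalar part $f+2x_0\partial_{x_0}f$ and the vector part $3f+2\sum_{l=1}^3 x_l\partial_{x_l}f$ combine to $4f+2\sum_{l=0}^3 x_l\partial_{x_l}f$, which is the claimed identity. The only vigilance needed throughout is to keep every quaternionic factor on its correct (left) side when differentiating, since the coefficient operators act from the left; all the genuine non-commutativity is absorbed into the single application of \eqref{lb}, and regularity is what removes the two leftover $\underline{q}$- and $x_0$-weighted derivative terms that do not fit the final form.
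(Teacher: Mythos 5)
Your proof is correct and follows essentially the same route as the paper's: decompose $q=x_0+\underline{x}$, apply the Leibniz rule \eqref{lb} with $m=3$ to the term $\partial_{\underline{x}}(\underline{x}f)$, and use regularity in the two forms $\overline{\partial}f=2\partial_{x_0}f$ and $\partial f=0$ to absorb the leftover terms. The only difference is organizational (you group the scalar and vector contributions separately before adding), and you are slightly more explicit than the paper in justifying $\overline{\partial}f=2\partial_{x_0}f$.
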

\begin{proof}
Notice that for the particular case of quaternions the Leibniz rule given by  \eqref{lb} correspond to $m=3$. Then, if we write $q=x_0+\underline{x}$ with $\underline{x}=x_1i+x_2j+x_3k$ we obtain \begin{equation} \label{Leibniz} \displaystyle\partial_{\underline{x}}(\underline{x}f)=-3f-\underline{x}\partial_{\underline{x}}f-2\sum_{l=1}^3x_l\partial_{x_l}f. \end{equation}
Morever, we have $$\displaystyle\overline{\partial}(qf)=f+x_0\overline{\partial}f+\underline{x}\partial_{x_0}f-\partial_{\underline{x}}(\underline{x}f).$$
So, thanks to \eqref{Leibniz} we obtain $$\displaystyle\overline{\partial}(qf)=4f+x_0\overline{\partial}f +\underline{x}\partial f+2\sum_{l=1}^3x_l\partial_{x_l}f.$$  It is easy to see that  $\overline{\partial}f=2\partial_{x_0}f$. Moreover, if $f$ is regular then $\partial f=0$ which completes the proof.
\end{proof}
Let us consider the Euler operator $$\displaystyle E_q:=\sum_{l=0}^3x_l\partial_{x_l}.$$ We have:
\begin{lem}  \label{Euleraction}
Let $h\geq 2$ and  $0\leq s \leq h$. Then
$$E_q(q^{h-s}\overline{q}^{s})=hq^{h-s}\overline{q}^{s}.$$
\end{lem}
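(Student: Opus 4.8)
The plan is to recognize $E_q$ as the Euler operator and to exploit the fact that the quaternionic monomials $q^{h-s}\overline{q}^{s}$ are homogeneous of degree $h$ in the real variables $x_0,x_1,x_2,x_3$. Indeed, both $q=x_0+x_1i+x_2j+x_3k$ and $\overline{q}=x_0-x_1i-x_2j-x_3k$ are homogeneous of degree $1$, so their ordered product has total degree $(h-s)+s=h$, and Euler's identity for homogeneous functions gives the claim at once. The only point deserving care is the non-commutativity of the quaternionic product, so rather than invoking the scalar Euler theorem as a black box I would reconstruct the argument from the Leibniz rule, checking that the ordering of factors is respected throughout.

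First I would compute the action of $E_q$ on the two basic building blocks. A direct calculation yields
$$E_q(q)=\sum_{l=0}^3 x_l\partial_{x_l}(x_0+x_1i+x_2j+x_3k)=x_0+x_1i+x_2j+x_3k=q,$$
and in the same way $E_q(\overline{q})=\overline{q}$. Next I would observe that $E_q$ is a derivation: since each $\partial_{x_l}$ satisfies $\partial_{x_l}(fg)=(\partial_{x_l}f)g+f(\partial_{x_l}g)$ with the order of the (possibly non-commuting) factors preserved, multiplying by $x_l$ and summing over $l$ gives the Leibniz identity $E_q(fg)=(E_q f)g+f(E_q g)$.

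Finally, writing $q^{h-s}\overline{q}^{s}$ as an ordered product $a_1a_2\cdots a_h$ of $h$ factors, each equal to $q$ or $\overline{q}$, I would apply the derivation property factor by factor. Each term of the resulting sum replaces exactly one factor $a_i$ by $E_q(a_i)=a_i$ and so reproduces the full product unchanged; since there are $h$ such terms, we obtain
$$E_q(q^{h-s}\overline{q}^{s})=\sum_{i=1}^{h}a_1\cdots a_{i-1}(E_q a_i)\,a_{i+1}\cdots a_h=h\,q^{h-s}\overline{q}^{s},$$
which is the assertion. I do not expect any genuine obstacle: the statement is essentially Euler's homogeneity theorem, and the non-commutativity is harmless precisely because the Leibniz rule preserves the ordering of the factors, so no reshuffling of $q$'s and $\overline{q}$'s is ever required.
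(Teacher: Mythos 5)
Your proof is correct, and it takes a genuinely different (and cleaner) route than the paper's. The paper proves the lemma by brute force: it records the explicit non-commutative expansions
$\frac{d}{dx_1}(q^l)=iq^{l-1}+qiq^{l-2}+\cdots+q^{l-1}i$ and the analogous formula for $\bar q^l$ (and for the derivatives in $x_2,x_3$), applies the Leibniz rule to $q^{h-s}\bar q^{s}$, and then asserts that ``standard computations'' assemble these pieces into $h\,q^{h-s}\bar q^{s}$. You instead isolate the structural reason the identity holds: $E_q$ is a derivation (because each $\partial_{x_l}$ is an order-preserving derivation and the weights $x_l$ are real, hence central), $E_q(q)=q$ and $E_q(\bar q)=\bar q$, so $E_q$ acts on any ordered word of length $h$ in $q,\bar q$ as multiplication by $h$ --- Euler's homogeneity theorem, verified directly so that non-commutativity is visibly harmless. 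Your argument buys transparency and generality (it works for any word in $q$ and $\bar q$, not just $q^{h-s}\bar q^{s}$, and for all $h\ge 0$), and it dispenses with the explicit derivative formulas \eqref{x1q}--\eqref{x1qbar} altogether; the paper's computation, when one actually carries it out, amounts to resumming exactly the telescoping terms your derivation identity produces, so the two proofs agree at bottom but yours makes the mechanism explicit where the paper leaves it implicit. No gaps.
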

\begin{proof}
Note that for all $l\geq 0$, we have
\begin{equation} \label{x1q}
\displaystyle\frac{d}{dx_1}(q^l)=iq^{l-1}+qiq^{l-2}+q^{2}iq^{l-3}+...+q^{l-1}i
\end{equation}
and
\begin{equation} \label{x1qbar}
\displaystyle\frac{d}{dx_1}(\bar{q}^l)=-i\bar{q}^{l-1}-\bar{q}i\bar{q}^{l-2}-\bar{q}^{2}i\bar{q}^{l-3}-...-\bar{q}^{l-1}i.
\end{equation}
We have analogous relations for $\displaystyle\frac{d}{dx_2}(q^l),\frac{d}{dx_3}(q^l)$ and $\displaystyle\frac{d}{dx_2}(\bar{q}^l),\frac{d}{dx_3}(\bar{q}^l).$ Now observe that by the classical Leibniz rule we have $$\displaystyle\frac{d}{dx_0}(q^{h-s}\bar{q}^s)=sq^{h-s}\bar{q}^{s-1}+(h-s)q^{h-s-1}\bar{q}^{s}.$$
On the other hand, applying the Leibniz rule we also have $$\displaystyle\frac{d}{dx_1}(q^{h-s}\bar{q}^s)=q^{h-s}\frac{d}{dx_1}(\bar{q}^s)+\frac{d}{dx_1}(q^{h-s})\bar{q}^s.$$
Therefore, we use the formulas \eqref{x1q}, \eqref{x1qbar} and those ones  with respect to all other derivatives to compute $\displaystyle\frac{d}{dx_1}(q^{h-s}\bar{q}^s), \displaystyle\frac{d}{dx_2}(q^{h-s}\bar{q}^s)$ and $\displaystyle\frac{d}{dx_3}(q^{h-s}\bar{q}^s)$.
Then, by standard  computations we obtain the result.
\end{proof}
\begin{lem} \label{l2}
For all $k\geq 1,$
$$\displaystyle\overline{\partial}\overset{\sim}f_{k+2}=2(k+2)\overset{\sim}f_{k+1}.$$
\end{lem}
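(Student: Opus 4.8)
The plan is to differentiate the recursion relation. Recall from the Remark above that $\overset{\sim}f_{k+2}=2\partial f_{k+1}+q\overset{\sim}f_{k+1}$, so applying $\overline{\partial}$ to both sides and using linearity gives
$$\overline{\partial}\overset{\sim}f_{k+2}=2\,\overline{\partial}\partial f_{k+1}+\overline{\partial}(q\overset{\sim}f_{k+1}).$$
I would handle the two summands separately. Since $\overline{\partial}\partial=\Delta$ and $\Delta f_{k+1}=\tau(f_{k+1})=\overset{\sim}f_{k+1}$ by the Fueter mapping theorem, the first term is simply $2\overset{\sim}f_{k+1}$.

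For the second term I would exploit that $\overset{\sim}f_{k+1}$ is Fueter regular (again by the Fueter mapping theorem), which is exactly the hypothesis of Lemma \ref{l1}. Applying that lemma with $f=\overset{\sim}f_{k+1}$ yields
$$\overline{\partial}(q\overset{\sim}f_{k+1})=4\overset{\sim}f_{k+1}+2E_q\overset{\sim}f_{k+1},$$
where $E_q=\sum_{l=0}^3 x_l\partial_{x_l}$ is the Euler operator. It then remains to evaluate $E_q\overset{\sim}f_{k+1}$. Because $\overset{\sim}f_{k+1}$ is a spherical monogenic of degree $k-1$, that is, a homogeneous polynomial of degree $k-1$ in $(x_0,x_1,x_2,x_3)$, Euler's identity gives $E_q\overset{\sim}f_{k+1}=(k-1)\overset{\sim}f_{k+1}$; concretely, writing $\overset{\sim}f_{k+1}(q)=-4\sum_{j=0}^{k-1}(k-j)q^{k-1-j}\overline{q}^{j}$ and applying Lemma \ref{Euleraction} termwise produces the same factor $k-1$. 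Substituting back, the second term equals $(2k+2)\overset{\sim}f_{k+1}$.

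Adding the two contributions gives
$$\overline{\partial}\overset{\sim}f_{k+2}=2\overset{\sim}f_{k+1}+(2k+2)\overset{\sim}f_{k+1}=2(k+2)\overset{\sim}f_{k+1},$$
which is the claim. I do not expect a genuine obstacle here: once the recursion is differentiated, the proof is merely the assembly of Lemma \ref{l1}, the factorization $\overline{\partial}\partial=\Delta$, and the homogeneity of $\overset{\sim}f_{k+1}$. The only point requiring a little care is the Euler evaluation for small $k$, since Lemma \ref{Euleraction} is stated for monomials of degree $h\geq 2$: for $k=1,2$, where $\overset{\sim}f_{k+1}$ has degree $0$ or $1$, I would invoke Euler's identity for homogeneous functions directly, or simply verify these base cases by hand using $\overset{\sim}f_2=-4$. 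An alternative, slightly more computational route would bypass the recursion: since $\overset{\sim}f_{k+2}$ is regular one has $\overline{\partial}\overset{\sim}f_{k+2}=2\partial_{x_0}\overset{\sim}f_{k+2}$, and differentiating the explicit formula of Theorem \ref{FueterAct} in $x_0$ and reindexing recovers $2(k+2)\overset{\sim}f_{k+1}$; I would nonetheless prefer the recursion approach as it is cleaner.
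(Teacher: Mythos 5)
Your proposal is correct and follows essentially the same route as the paper: differentiate the recursion $\overset{\sim}f_{k+2}=2\partial f_{k+1}+q\overset{\sim}f_{k+1}$, use $\overline{\partial}\partial=\Delta$ for the first term, Lemma \ref{l1} for the second, and the Euler operator evaluation $E_q\overset{\sim}f_{k+1}=(k-1)\overset{\sim}f_{k+1}$ to assemble the constant $2(k+2)$. The only cosmetic difference is that the paper shifts indices and checks $k=1,2$ by direct computation before running the argument for $k\geq 2$, whereas you keep the original indices and note that the degree-$0$ and degree-$1$ cases follow from Euler's identity directly; both handle the small cases adequately.
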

\begin{proof}
Direct computations show that the formula holds for $k=1$ and $k=2$. \\
Let $k\geq 2,$ we can just prove $\displaystyle\overline{\partial}\overset{\sim}f_{k+3}=2(k+3)\overset{\sim}f_{k+2}$. Indeed, we have $$\overset{\sim}f_{k+3}=2\partial f_{k+2}+q\overset{\sim}f_{k+2}$$ Then, we apply the conjugate of the Cauchy-Fueter operator on both sides of the latter equality and we use the fact that $\overline{\partial}\partial=\partial\overline{\partial}=\Delta$ to get
\begin{equation}\label{ME}\overline{\partial}\overset{\sim}f_{k+3}=2\overset{\sim}f_{k+2}
+\overline{\partial}(q\overset{\sim}f_{k+2}).\end{equation} Let us calculate $\overline{\partial}(q\overset{\sim}f_{k+2})$.

Since $\overset{\sim}f_{k+2}$ is Fueter regular and in view of Lemma \ref{l1} we have  \begin{equation}\label{leibniz2}\displaystyle\overline{\partial}(q\overset{\sim}f_{k+2})=4\overset{\sim}f_{k+2}+2E_q \overset{\sim}f_{k+2},\end{equation}
and
\begin{equation} \label{Euler}E_q \overset{\sim}f_{k+2}=-4\displaystyle\sum_{s=0}^{k}(k+1-s)E_q(q^{k-s}\overline{q}^{s}).\end{equation}
Hence, we apply Lemma \ref{Euleraction} to obtain
 $$E_q(q^{k-s}\overline{q}^{s})=kq^{k-s}\overline{q}^{s}.$$ \\ Therefore, by replacing in \eqref{Euler} we get $$E_q \overset{\sim}f_{k+2}=k\overset{\sim}f_{k+2}.$$ \\ Finally, we conclude from the equations \eqref{ME} and \eqref{leibniz2} that $$\displaystyle\overline{\partial}\overset{\sim}f_{k+3}=2(k+3)\overset{\sim}f_{k+2}.$$ This concludes the proof.
\end{proof}
 For $k\geq 0$, let us consider the sequence of polynomials defined by $$P_k(q):=\displaystyle \frac{\overset{\sim}f_{k+2}(q)}{(k+2)!}.$$ We prove the following
\begin{thm}
The polynomials $\lbrace{P_k}\rbrace_{k\geq 0}$ form an Appell set of spherical monogenics of degree $k$ in the quaternionic vector space $\mathcal{RB}(\Hq)$.
\end{thm}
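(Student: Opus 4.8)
The plan is to verify the three defining features of an Appell set of spherical monogenics directly from the results already established, so that the statement reduces to a short bookkeeping argument. Writing $P_k(q)=\overset{\sim}f_{k+2}(q)/(k+2)!$, I must check that: (i) each $P_k$ is a spherical monogenic of degree exactly $k$; (ii) each $P_k$ lies in $\mathcal{RB}(\Hq)$; and (iii) the sequence satisfies the Appell lowering relation with respect to the appropriate hypercomplex derivative.

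Points (i) and (ii) require essentially no new work. For (i), part (1) of the Remark following the Proposition records that $\overset{\sim}f_{k+2}$ is a spherical monogenic of degree $(k+2)-2=k$; since $P_k$ is a nonzero real scalar multiple of $\overset{\sim}f_{k+2}$, it is itself a homogeneous monogenic polynomial of degree $k$, so $\lbrace P_k\rbrace_{k\geq 0}$ is a genuine polynomial sequence with $\deg P_k=k$. For (ii), the Proposition asserts $\overset{\sim}f_{k+2}\in\mathcal{RB}(\Hq)$ for every $k\geq 0$ (the case $n=k+2\geq 2$), and since $P_k$ differs from $\overset{\sim}f_{k+2}$ only by the real factor $1/(k+2)!$, it too belongs to $\mathcal{RB}(\Hq)$.

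The substantive point is (iii), where the key input is Lemma \ref{l2}. Applying it with the normalization built into $P_k$, I would compute, for $k\geq 1$,
$$\frac{1}{2}\overline{\partial}P_k = \frac{1}{2(k+2)!}\overline{\partial}\overset{\sim}f_{k+2} = \frac{2(k+2)}{2(k+2)!}\overset{\sim}f_{k+1} = \frac{\overset{\sim}f_{k+1}}{(k+1)!} = P_{k-1},$$
so that $\tfrac12\overline{\partial}P_k=P_{k-1}$. This is exactly the Appell identity in the normalized form $DP_k=P_{k-1}$ discussed in the Introduction, and the factorial $(k+2)!$ in the definition of $P_k$ was chosen precisely so that the constant $2(k+2)$ produced by Lemma \ref{l2} cancels cleanly. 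For $k=0$ the polynomial $P_0$ is a constant, so $\tfrac12\overline{\partial}P_0=0$, consistent with there being no $P_{-1}$.

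The only conceptual care needed is to make explicit which derivative realizes the Appell property: the operator $\tfrac12\overline{\partial}$ is the hypercomplex derivative, and because each $P_k$ is Fueter regular one has $\overline{\partial}P_k=2\partial_{x_0}P_k$ (the identity noted in the proof of Lemma \ref{l1}). Hence the relation may equivalently be written $\partial_{x_0}P_k=P_{k-1}$, exhibiting $\lbrace P_k\rbrace$ as the faithful quaternionic monogenic analogue of a classical Appell sequence differentiated in the real variable $x_0$. I do not anticipate a genuine obstacle here: the difficulty has already been absorbed into Lemma \ref{l2}, and what remains is merely to assemble the degree, integrability, and lowering-relation statements into the definition of an Appell set.
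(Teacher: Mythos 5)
Your proposal is correct and follows essentially the same route as the paper: degree and membership in $\mathcal{RB}(\Hq)$ are read off from the earlier Proposition and Remark, and the Appell lowering relation $\tfrac12\overline{\partial}P_k=P_{k-1}$ is obtained by normalizing the identity of Lemma \ref{l2}. Your extra observation that the relation can be rewritten as $\partial_{x_0}P_k=P_{k-1}$ for regular functions is a harmless (and correct) addition not present in the paper's proof.
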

\begin{proof}
Clearly, each homogeneous monogenic polynomial of the sequence $\lbrace{P_k}\rbrace_{k\geq 0}$ is exactly of degree $k$ and belongs to $\mathcal{RB}(\Hq)$ since $\overset{\sim}f_{k+2}$ is for all $k\geq 0$. Furthermore, thanks to Lemma \ref{l2} we can easily see that for all $k\geq 1$ we have $\overline{\partial}P_{k}=2P_{k-1}$. It follows that this sequence forms an Appell set in $\mathcal{RB}(\Hq)$, in the sense of \cite{Chihara1978,Sheffer}, with respect to the hypercomplex derivative $\displaystyle\frac{1}{2} \overline{\partial}$.
\end{proof}
\begin{rem}\label{remQ}
Let $k\geq 0$ and set $\displaystyle Q_k:=-\frac{k!}{2} P_k$. Then, we have\begin{equation} \label{Qk1}
\displaystyle Q_k=-\frac{\overset{\sim}f_{k+2}}{2(k+1)(k+2)}
\end{equation} and $$\displaystyle\frac{1}{2}\overline{\partial}Q_k=kQ_{k-1}.$$ Moreover, we can see that the obtained family of polynomials may be expressed in terms of the coefficients used in formulas 5 and 6 in the paper \cite{CMF2011}. Namely, we have
\begin{equation} \label{Qk2}
\displaystyle Q_k(q)=\sum_{j=0}^k T^k_jq^{k-j}\overline{q}^j
\end{equation}

where $$\displaystyle T^k_j:=T^{k}_{j}(3)=\frac{k!}{(3)_k}\frac{(2)_{k-j}(1)_j}{(k-j)!j!}=\frac{2(k-j+1)}{(k+1)(k+2)}$$ and $(a)_n=a(a+1)...(a+n-1)$ is the Pochhammer symbol.
\end{rem}
\section{The Bargmann-Fock-Fueter transform on the quaternions}
In this Section, we study the Bargman-Fock-Fueter transform on the space of quaternions. A similar integral transform was introduced in \cite{CGS2011} making use of the theory of slice hyperholomorphic Bergman spaces on the quaternionic unit ball and the Fueter mapping theorem. To this end, we introduce the Fock-Fueter kernel on the quaternions. Indeed, in \cite{AlpayColomboSabadini2014}, the authors proved that the slice hyperholomorphic Fock space $\mathcal{F}_{Slice}(\Hq)$ is a right quaternionic reproducing kernel Hilbert space whose reproducing kernel is given by the formula $$K_\Hq(p,q):=e_*(p\bar{q})=\displaystyle \sum_{k=0}^\infty\frac{p^k\bar{q}^k}{k!}, \qquad \text{ }\forall (p,q)\in \Hq\times\Hq.$$ Then, we consider the following
\begin{defn}[Fock-Fueter kernel]
The Fock-Fueter kernel $K_\mathcal{F}(q,p)$ is defined by
$$K_\mathcal{F}(q,p):=\tau_q K_\Hq(q,p)=\Delta K_{\Hq}(q,p),\qquad \text{  } \forall (q,p)\in \Hq\times \Hq,$$ where Laplacian $\Delta$ is taken with respect to the variable $q$.
   \end{defn}
   We prove the following
\begin{prop} \label{FKex}
For all $(q,p)\in \Hq\times\Hq, $ we have
$$K_\mathcal{F}(q,p)=\displaystyle -2\sum_{k=0}^\infty\frac{Q_k(q)}{k!}\bar{p}^{k+2},$$
where $Q_k(q)$ are the quaternionic monogenic polynomials defined by \eqref{Qk2}.
\end{prop}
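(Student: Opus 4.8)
The plan is to expand the slice hyperholomorphic reproducing kernel as a power series in the first variable, apply the Fueter map $\tau_q=\Delta_q$ termwise, and then recognize the coefficients via the relation between $\overset{\sim}f_{k+2}$ and $Q_k$ recorded in Remark \ref{remQ}. From the definition of $K_\Hq$ one has
$$K_\Hq(q,p)=\sum_{k=0}^\infty \frac{q^k\bar p^k}{k!},$$
and since the Laplacian is taken with respect to $q$ only, while $\bar p^k$ is a constant multiplier, one expects
$$K_\mathcal{F}(q,p)=\Delta_q K_\Hq(q,p)=\sum_{k=0}^\infty \frac{\Delta_q(q^k)}{k!}\bar p^k=\sum_{k=0}^\infty \frac{\overset{\sim}f_k(q)}{k!}\bar p^k,$$
where I use $\overset{\sim}f_k=\tau[f_k]=\Delta(q^k)$ as in Theorem \ref{FueterAct}.

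Next I would observe that the first two terms vanish, since $\overset{\sim}f_0=\Delta(1)=0$ and $\overset{\sim}f_1=\Delta(q)=0$, so the sum effectively starts at $k=2$. Reindexing by $k=m+2$ gives
$$K_\mathcal{F}(q,p)=\sum_{m=0}^\infty \frac{\overset{\sim}f_{m+2}(q)}{(m+2)!}\bar p^{m+2}.$$
Then I would substitute the identity \eqref{Qk1}, that is $\overset{\sim}f_{m+2}=-2(m+1)(m+2)Q_m$, and simplify
$$\frac{\overset{\sim}f_{m+2}(q)}{(m+2)!}=\frac{-2(m+1)(m+2)Q_m(q)}{(m+2)!}=-2\,\frac{Q_m(q)}{m!},$$
which, after renaming $m$ to $k$, is exactly the claimed formula with the $Q_k$ given by \eqref{Qk2}.

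The step requiring genuine care, and the main obstacle, is justifying the termwise application of $\Delta_q$ to the infinite series, i.e.\ the interchange of the Laplacian with the summation. For this I would invoke the growth estimate established in the proof that $\overset{\sim}f_n\in\mathcal{RB}(\Hq)$, namely $|\overset{\sim}f_k(q)|\le 2k(k-1)|q|^{k-2}$, together with the companion bounds $|\partial_{x_l}(q^k)|\le k|q|^{k-1}$ and $|\partial_{x_l}\partial_{x_m}(q^k)|\le k(k-1)|q|^{k-2}$ for the first- and second-order partials. These yield, for instance,
$$\sum_{k=2}^\infty \frac{|\overset{\sim}f_k(q)|\,|p|^k}{k!}\le 2|p|^2\sum_{k=2}^\infty \frac{(|q||p|)^{k-2}}{(k-2)!}=2|p|^2 e^{|q||p|},$$
and similarly for the series of first and second partials, so that all these series converge absolutely and uniformly on compact subsets of $\Hq\times\Hq$. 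This legitimizes the termwise differentiation and guarantees that the resulting series genuinely represents $\Delta_q K_\Hq(q,p)=K_\mathcal{F}(q,p)$, completing the argument.
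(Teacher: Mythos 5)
Your proposal is correct and follows essentially the same route as the paper: expand $K_\Hq(q,p)$ as $\sum_k q^k\bar p^k/k!$, apply $\Delta_q$ termwise, use $\Delta(q^k)=-2(k-1)kQ_{k-2}(q)$ from Remark \ref{remQ}, and reindex. Your additional justification of the termwise differentiation via the bound $|\overset{\sim}f_k(q)|\le 2k(k-1)|q|^{k-2}$ is a welcome extra level of care that the paper's proof leaves implicit, but it does not change the argument.
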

\begin{proof}
Let $(q,p)\in\Hq\times\Hq$, by definition of the Fock-Fueter kernel we have
\[ \begin{split}
 \displaystyle K_\mathcal{F}(q,p)&= \Delta K_{\Hq}(q,p) \\
 &= \Delta \left(\sum_{k=0}^{\infty}\frac{q^k\overline{p}^k}{k!}\right) \\
&= \sum_{k=2}^{\infty}\frac{\Delta (q^k) \overline{p}^k}{k!}.\\
\end{split}
\]

However, thanks to Remark \ref{remQ} we observe that $$\displaystyle \Delta(q^k)=-2(k-1)kQ_{k-2}(q);\text{ } \forall k\geq 2.$$
Therefore, we get \[ \begin{split}
 \displaystyle K_\mathcal{F}(q,p)& =-2 \sum_{k=2}^{\infty}\frac{Q_{k-2}(q)}{(k-2)!}\overline{p}^k \\
&= -2\sum_{k=0}^{\infty}\frac{Q_k(q)}{k!}\overline{p}^{k+2}.\\
\end{split}
\]

\end{proof}
\begin{rem}

For $s\in \Hq,$ let $${\rm {Exp}}(s):=\sum_{k=0}^\infty\frac{Q_k(s)}{k!}$$ be the generalized Cauchy-Fueter regular exponential function considered in the paper \cite{CMF2011}.
 Then, we have  $$K_\mathcal{F}(q,p)=-2p^2 {\rm {Exp}}(pq), \text{  } \forall (q,p)\in \Hq\times\R.$$
\end{rem}
\begin{prop}
 The Fock-Fueter kernel $K_\mathcal{F}(q,p)$ is Cauchy-Fueter regular on $\Hq$ with respect to the variable $q$ and anti-slice entire regular with respect to the variable $p$.
\end{prop}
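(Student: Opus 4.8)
The plan is to handle the two variables separately: the regularity in $q$ comes essentially for free from the construction, while the anti-regularity in $p$ is read off from the series of Proposition \ref{FKex}, the only real work being a convergence estimate that legitimizes passing regularity from the summands to the sum.

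For the variable $q$ I would argue directly from the definition. For each fixed $p\in\Hq$ the function $q\longmapsto K_\Hq(q,p)=\sum_{k\geq 0}q^k\overline{p}^k/k!$ is a convergent power series in $q$ with right quaternionic coefficients $\overline{p}^k$, hence slice hyperholomorphic (indeed slice entire) on $\Hq$. Since $K_\mathcal{F}(q,p)=\tau_q K_\Hq(q,p)$ by definition, the Fueter mapping theorem and the Remark following it give immediately that $K_\mathcal{F}(\cdot,p)=\tau_q K_\Hq(\cdot,p)\in\mathcal{R}(\Hq)$. Should one wish to avoid interchanging $\Delta$ with the infinite sum at this abstract level, the same conclusion is obtained termwise from Proposition \ref{FKex}: every $Q_k$ is a spherical monogenic and right multiplication by the constant $\overline{p}^{k+2}$ preserves left regularity, because $\partial\big(Q_k\,\overline{p}^{k+2}\big)=(\partial Q_k)\,\overline{p}^{k+2}=0$, so each summand is Fueter regular in $q$.

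For the variable $p$ I would begin from
\[
K_\mathcal{F}(q,p)=-2\sum_{k=0}^{\infty}\frac{Q_k(q)}{k!}\,\overline{p}^{k+2},
\]
the expansion of Proposition \ref{FKex}. Regarded as a function of $p$, this is a power series in $\overline{p}$ with left coefficients $-2Q_k(q)/k!$, exactly as the Fock reproducing kernel of \cite{AlpayColomboSabadini2014} is anti-slice regular in its conjugated slot; since each power $\overline{p}^{k+2}$ is anti-slice regular on all of $\Hq$, every summand is anti-slice entire regular in $p$. To transfer this to the sum I would prove local uniform convergence on $\Hq\times\Hq$. Using \eqref{Qk2} together with $T^k_j\geq 0$ and $\sum_{j=0}^k T^k_j=1$ --- the latter following from $\sum_{j=0}^k(k-j+1)=\tfrac{(k+1)(k+2)}{2}$ --- one gets the clean bound $|Q_k(q)|\leq |q|^k$, and therefore
\[
\sum_{k=0}^{\infty}\left|\frac{Q_k(q)}{k!}\,\overline{p}^{k+2}\right|\leq |p|^2\sum_{k=0}^{\infty}\frac{(|q|\,|p|)^k}{k!}=|p|^2\,e^{|q|\,|p|}<\infty,
\]
so the series converges absolutely and uniformly on the compact subsets of $\Hq\times\Hq$.

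The last step is to invoke the Weierstrass-type closure of these function classes: a sequence of Fueter regular (resp.\ anti-slice regular) functions converging uniformly on compacta has a limit of the same type, since uniform convergence allows one to differentiate the corresponding Cauchy-type integral representation under the integral sign. Combined with the two termwise computations above, this yields that $K_\mathcal{F}$ is Cauchy-Fueter regular in $q$ and anti-slice entire regular in $p$. I expect this interchange of limit and differential operator to be the only delicate point; everything else is a direct reading of Proposition \ref{FKex} and the elementary coefficient estimate $|Q_k(q)|\leq |q|^k$.
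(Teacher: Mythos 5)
Your proposal is correct and follows essentially the same route as the paper: regularity in $q$ is deduced from the Fueter mapping theorem, and anti-slice regularity in $p$ is read off from the expansion $K_\mathcal{F}(q,p)=-2\sum_{k\geq 0}\frac{Q_k(q)}{k!}\overline{p}^{k+2}$ via the series expansion theorem. The only difference is that you make explicit the bound $|Q_k(q)|\leq |q|^k$ and the uniform convergence on compacta, which the paper leaves implicit at this point (the same estimate appears there only in the subsequent proposition bounding $|K_\mathcal{F}(q,p)|$).
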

\begin{proof}
 Note that $K_\mathcal{F}(q,p)$ is Cauchy-Fueter regular on $\Hq$ with respect to the first variable thanks to the Fueter-mapping theorem. On the other hand, for all $p\in\Hq$ we have $$\displaystyle K_\mathcal{F}(q,p)= f_q(p)=\sum_{k=0}^\infty a_k(q)\bar{p}^{k+2} \text{ where } a_k(q)=-2\frac{Q_k(q)}{k!}.$$ Then, it is clear by the series expansion theorem for slice hyperholomorphic functions  that the Fock-Fueter kernel is slice anti-regular with respect to the variable $p$.
\end{proof}
The Fock-Fueter kernel admits the following estimate
\begin{prop}
For all $(q,p)\in \Hq\times\Hq, $ we have
$$|K_\mathcal{F}(q,p)| \leq \displaystyle 2|p|^2 e^{|qp|}.$$

\end{prop}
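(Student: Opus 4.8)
The plan is to work directly from the series representation of the Fock-Fueter kernel established in Proposition \ref{FKex}, namely $K_\mathcal{F}(q,p)=-2\sum_{k=0}^\infty\frac{Q_k(q)}{k!}\bar{p}^{k+2}$, and to bound the summands term by term. Applying the triangle inequality to this series, and using $|\bar{p}|=|p|$, gives
$$|K_\mathcal{F}(q,p)|\leq 2\sum_{k=0}^\infty\frac{|Q_k(q)|}{k!}|p|^{k+2},$$
so the whole estimate reduces to obtaining a sharp pointwise bound on the polynomials $Q_k$.

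First I would estimate $|Q_k(q)|$ using the explicit expansion \eqref{Qk2}, that is $Q_k(q)=\sum_{j=0}^k T^k_j q^{k-j}\bar{q}^j$. Since the quaternionic norm is multiplicative and $|\bar{q}|=|q|$, every monomial satisfies $|q^{k-j}\bar{q}^j|=|q|^k$; moreover the coefficients $T^k_j=\frac{2(k-j+1)}{(k+1)(k+2)}$ are nonnegative. Hence $|Q_k(q)|\leq |q|^k\sum_{j=0}^k T^k_j$, and everything now hinges on evaluating this coefficient sum.

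The key computational step, and essentially the only place where any genuine work is required, is to show that $\sum_{j=0}^k T^k_j=1$. Reindexing by $i=k-j$ turns the sum into $\frac{2}{(k+1)(k+2)}\sum_{i=0}^k(i+1)=\frac{2}{(k+1)(k+2)}\cdot\frac{(k+1)(k+2)}{2}=1$, which yields the clean bound $|Q_k(q)|\leq |q|^k$. This is the natural quaternionic analogue of the fact that the classical Appell-type coefficients sum to one, and it is precisely what makes the final bound come out with the sharp constant.

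Finally I would substitute this estimate back into the series, factor out $|p|^2$, and recognize the exponential series:
$$|K_\mathcal{F}(q,p)|\leq 2\sum_{k=0}^\infty\frac{|q|^k}{k!}|p|^{k+2}=2|p|^2\sum_{k=0}^\infty\frac{(|q||p|)^k}{k!}=2|p|^2 e^{|q||p|}=2|p|^2 e^{|qp|},$$
using once more the multiplicativity $|qp|=|q||p|$. The main (and rather mild) obstacle is just the coefficient identity $\sum_{j=0}^k T^k_j=1$; once that is in hand, the remainder is a routine application of the triangle inequality, norm multiplicativity, and the power series of the exponential.
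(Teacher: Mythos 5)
Your proposal is correct and follows essentially the same route as the paper: both bound $|Q_k(q)|\leq |q|^k$ via the coefficient identity $\sum_{j=0}^k T^k_j=1$ (the paper computes $\frac{2}{(k+1)(k+2)}\sum_{j=0}^k(k+1-j)$, which is your reindexed sum), and then sum the resulting exponential series. No differences worth noting.
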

\begin{proof}
First, observe that for all $k\geq 0$ and $q\in\Hq$ we have
\[ \begin{split}
 \displaystyle |Q_k(q)|& \leq \sum_{j=0}^{k}T_{j}^{k}(3) |q|^k \\
&= |q|^k \frac{2}{(k+1)(k+2)}\sum_{j=0}^{k}(k+1-j)\\
&=|q|^k.\\
\end{split}
\]
Hence, making use of Proposition \ref{FKex} we obtain \[ \begin{split}
 \displaystyle |K_\mathcal{F}(q,p)|& \leq 2\sum_{k=0}^{\infty} \frac{|Q_k(q)|}{k!}|p|^{k+2} \\
&\leq 2|p|^{2} \sum_{k=0}^{\infty} \frac{|qp|^k}{k!}\\
&=2|p|^{2}e^{|qp|}.\\
\end{split}
\]
\end{proof}
In this case we introduce the following definition
  \begin{defn}[Fock-Fueter transform]
  Let $f\in \mathcal{F}_{Slice}(\Hq)$. We define the Fock-Fueter transform of $f$ by $$\breve{f}(q):=\displaystyle \int_{\mathbb{C}_I}K_{\mathcal{F}}(q,p)f(p)d\mu_I(p);$$
  where $K_{\mathcal{F}}$ is the Fock-Fueter kernel, $d\mu_I(p)=\displaystyle\frac{1}{\pi}e^{-|p|^2}d\lambda_I(p)$ and $I\in \Sq$.
   \end{defn}
 Let $L^2_\Hq(\R)$ denote the space of functions $\psi:\R\longrightarrow \Hq$ so that  $$ \displaystyle \Vert{\psi}\Vert_{L^2_\Hq(\R)}^2= \int_\R|\psi(x)|^2dx<\infty.$$ Then, for any $\varphi\in L^2_\Hq(\R)$ its quaternionic Segal-Bargmann transform is defined by $$\mathcal{B}_\Hq \varphi(q):=\displaystyle \int_\R A(q,x)\varphi(x)dx;$$ where the kernel function $A(q,x)$ is given by the formula $$A(q,x)=\pi^{-\frac{1}{4}}e^{-\frac{1}{2}(q^2+x^2)+\sqrt{2}qx}, \qquad \forall (q,x)\in \Hq\times \R.$$ It was shown in \cite{DG1.2017} that $\mathcal{B}_\Hq $ defines an isometry on $L^2_\Hq(\R)$ with range $\mathcal{F}_{Slice}(\Hq)$. Then, for any $\varphi\in L^2_\Hq(\R)$ we set $$f_\varphi:=\mathcal{B}_\Hq \varphi\in \mathcal{F}_{Slice}(\Hq)$$ and consider the associated Fock-Fueter transform $\breve{f_\varphi}$ that we will call the Bargmann-Fock-Fueter transform. We can easily check the following
 \begin{prop} \label{BFk1}
 Let $\varphi\in L^2_\Hq(\R)$, $q\in\Hq$ and $I\in\Sq$. Then, we have $$\breve{f_\varphi}(q):=\displaystyle\int_{\R}\Phi(q,x)\varphi(x)dx;$$
 where $$\Phi(q,x)=\displaystyle\int_{\C_I}K_{\mathcal{F}}(q,p)A(p,x)d\mu_I(p).$$
 \end{prop}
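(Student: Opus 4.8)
The plan is to establish the identity by unwinding the two definitions involved and then exchanging the order of integration via Fubini's theorem. Starting from the definition of the Fock-Fueter transform applied to $f_\varphi=\mathcal{B}_\Hq\varphi$, I would write
$$\breve{f_\varphi}(q)=\int_{\C_I}K_{\mathcal{F}}(q,p)f_\varphi(p)\,d\mu_I(p),$$
and then insert the integral representation of the quaternionic Segal-Bargmann transform, $f_\varphi(p)=\int_\R A(p,x)\varphi(x)\,dx$, to get the iterated integral
$$\breve{f_\varphi}(q)=\int_{\C_I}\left(\int_\R K_{\mathcal{F}}(q,p)A(p,x)\varphi(x)\,dx\right)d\mu_I(p).$$
Once the two integrations are interchanged, the inner integral over $\C_I$ is by definition $\Phi(q,x)$, and the asserted formula drops out.

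The only genuinely delicate point, and hence the main obstacle, is the justification of this interchange, for which I would verify absolute integrability of the integrand against the product measure. Since the quaternionic modulus is multiplicative we have $|K_{\mathcal{F}}(q,p)A(p,x)\varphi(x)|=|K_{\mathcal{F}}(q,p)|\,|A(p,x)|\,|\varphi(x)|$, so it suffices to bound this scalar quantity. Three ingredients enter. First, the estimate of the previous proposition gives $|K_{\mathcal{F}}(q,p)|\leq 2|p|^2 e^{|q||p|}$, whose exponential growth in $|p|$ is tamed by any Gaussian weight. Second, writing $p=u+Iv\in\C_I$ and using $\mathrm{Re}(p^2)=u^2-v^2$ together with $\mathrm{Re}(\sqrt2\,px)=\sqrt2\,ux$, one computes the modulus of the Bargmann kernel explicitly as $|A(p,x)|=\pi^{-1/4}e^{-\frac12(u^2-v^2)}e^{-\frac12 x^2}e^{\sqrt2\,ux}$. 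Third, the hypothesis $\varphi\in L^2_\Hq(\R)$ lets one control the $x$-integration by the Cauchy-Schwarz inequality.

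Putting these together, integrating first in $x$ yields a Gaussian estimate $\int_\R e^{-\frac12 x^2+\sqrt2\,ux}|\varphi(x)|\,dx\leq \pi^{1/4}\Vert\varphi\Vert_{L^2_\Hq(\R)}\,e^{u^2}$; combining the resulting exponent with the factor $e^{-\frac12(u^2-v^2)}$ from $|A|$ and the Gaussian weight $e^{-(u^2+v^2)}$ of $d\mu_I$ collapses to $e^{-\frac12(u^2+v^2)}=e^{-\frac12|p|^2}$. The outstanding integral is therefore dominated by $\int_{\C_I}|p|^2 e^{|q||p|}e^{-\frac12|p|^2}\,d\lambda_I(p)$, which is finite for each fixed $q\in\Hq$ because the Gaussian decay defeats both the polynomial factor $|p|^2$ and the exponential $e^{|q||p|}$. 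This confirms absolute integrability, so Fubini's theorem applies; note that the interchange leaves the product $K_{\mathcal{F}}(q,p)A(p,x)\varphi(x)$ in its fixed order, so the noncommutativity of $\Hq$ is immaterial. Recognizing the inner $\C_I$-integral as $\Phi(q,x)$ then finishes the proof.
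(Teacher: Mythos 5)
Your proof is correct and follows the same route as the paper, which simply states that the result ``follows directly from the definitions \ldots making use of Fubini's theorem.'' Your verification of absolute integrability (the kernel bound $|K_{\mathcal{F}}(q,p)|\leq 2|p|^2e^{|q||p|}$, the explicit modulus of $A(p,x)$, and the Cauchy--Schwarz step in $x$) is accurate and supplies exactly the justification the paper leaves implicit.
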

  \begin{proof}
  This follows directly from the definitions of the quaternionic Segal-Bargmann transform and the Fock-Fueter integral transform making use of Fubini's theorem.
   \end{proof}
Now, let us consider the quaternionic regular polynomials defined in Remark \ref{remQ} and which may be written as : \begin{equation}
\displaystyle Q_k(q)=\sum_{j=0}^k T^k_jq^{k-j}\overline{q}^j; \forall q\in\Hq.
\end{equation} Then, we denote the range of the Fueter mapping on the slice hyperholomorphic Fock space by $$ \mathcal{A}(\Hq):=\{ \tau (f); \text{ } f\in \mathcal{F}_{Slice}(\Hq)\}.$$
We have the following sequential characterization of this vector space:

\begin{thm} \label{AHcar}
Let $g\in \mathcal{R}(\Hq)$. Then, $g$ belongs to $ \mathcal{A}(\Hq)$ if and only if the following conditions are satisfied:
\begin{enumerate}
\item[i)] $\forall q\in\Hq, \textbf{ }  g(q)=\displaystyle\sum_{k=0}^\infty Q_k(q)\alpha_k$ where $(\alpha_k)_{k\geq 0}\subset \Hq.$
\item[ii)]$\displaystyle \sum_{k=0}^{\infty}\frac{k!}{(k+1)(k+2)}|\alpha_k|^2<\infty.$
\end{enumerate}
 \end{thm}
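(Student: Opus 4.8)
The plan is to diagonalize the Fueter map $\tau=\Delta$ in the orthogonal monomial basis $\{f_n(p)=p^n\}$ of $\mathcal{F}_{Slice}(\Hq)$ and to read off both conditions directly from the resulting coefficient identities. Recall from \eqref{spmonomials} that every $f\in\mathcal{F}_{Slice}(\Hq)$ has a unique expansion $f(p)=\sum_{n=0}^\infty p^n c_n$ with $c_n\in\Hq$ and $\Vert f\Vert^2_{\mathcal{F}_{Slice}(\Hq)}=\sum_{n=0}^\infty n!\,|c_n|^2<\infty$; since $f$ is entire, this power series converges locally uniformly together with all its derivatives, so $\Delta$ may be applied termwise. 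The key coefficient dictionary is the formula $\Delta(p^n)=\overset{\sim}f_n=-2(n-1)n\,Q_{n-2}$ for $n\ge 2$ (from Theorem \ref{FueterAct} and Remark \ref{remQ}), together with $\Delta(p^0)=\Delta(p^1)=0$.

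For \emph{necessity}, suppose $g=\tau(f)$ with $f=\sum_n p^n c_n$ as above. Because $\Delta$ is a real differential operator it commutes with the right scalar coefficients and acts termwise, giving
$$g(q)=\sum_{n\ge 2}\Delta(p^n)\,c_n=\sum_{n\ge 2}-2(n-1)n\,Q_{n-2}(q)\,c_n=\sum_{k=0}^\infty Q_k(q)\,\alpha_k,\qquad \alpha_k:=-2(k+1)(k+2)\,c_{k+2},$$
which is condition (i). To obtain (ii), I would invert this relation as $c_{k+2}=-\alpha_k/\big(2(k+1)(k+2)\big)$ and compute, using $(k+2)!=(k+2)(k+1)k!$,
$$(k+2)!\,|c_{k+2}|^2=\frac{k!}{4(k+1)(k+2)}\,|\alpha_k|^2 .$$
Summing over $k\ge 0$ shows that $\sum_{n\ge 2}n!\,|c_n|^2<\infty$ is equivalent to $\sum_{k\ge 0}\frac{k!}{(k+1)(k+2)}|\alpha_k|^2<\infty$, i.e. condition (ii). Since $c_0,c_1$ lie in the kernel of $\tau$ and do not affect $g$, the finiteness of the full Fock norm is governed exactly by (ii).

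For \emph{sufficiency}, given $g\in\mathcal{R}(\Hq)$ satisfying (i) and (ii), I would reverse the construction: set $c_0=c_1=0$ and $c_{k+2}:=-\alpha_k/\big(2(k+1)(k+2)\big)$, and define $f(p):=\sum_{n=0}^\infty p^n c_n$. The identity displayed above shows $\sum_n n!\,|c_n|^2=\tfrac14\sum_k \frac{k!}{(k+1)(k+2)}|\alpha_k|^2<\infty$ by (ii), so $f\in\mathcal{F}_{Slice}(\Hq)$. Applying $\tau$ termwise recovers $\tau(f)=\sum_{k}Q_k\alpha_k=g$, whence $g\in\mathcal{A}(\Hq)$. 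It remains to note that the coefficients $\alpha_k$ in (i) are uniquely determined by $g$: the $Q_k$ are homogeneous spherical monogenics of pairwise distinct degrees $k$, hence linearly independent, and the decomposition of a Fueter regular function into homogeneous components is unique; this makes the correspondence $c\leftrightarrow\alpha$ a genuine bijection and the two implications mutually consistent.

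The main obstacle I anticipate is the rigorous justification of the termwise application of the Laplacian to the infinite series in both directions — i.e. the interchange $\Delta\sum=\sum\Delta$ and the identification of $g$ with the series $\sum_k Q_k\alpha_k$ as an element of $\mathcal{R}(\Hq)$. I would handle this through the locally uniform convergence of the entire power series and all its derivatives (so that $\Delta$ passes through the sum), rather than through mere norm convergence; alternatively one can first verify the statement on polynomials and then extend by continuity, once the computation above is recast as showing that $\tau$ is, up to the explicit weight $k!/\big((k+1)(k+2)\big)$, an isometry from the closed span of $\{f_n\}_{n\ge 2}$ onto the completion of $\mathrm{span}\{Q_k\}$. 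The remaining steps are the elementary algebraic manipulations of factorials and Pochhammer-type coefficients already recorded in Remark \ref{remQ}.
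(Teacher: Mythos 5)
Your proposal is correct and follows essentially the same route as the paper: both directions rest on the termwise identity $\tau(q^{k+2})=-2(k+1)(k+2)Q_k$ together with the coefficient dictionary $\alpha_k=-2(k+1)(k+2)c_{k+2}$ and the resulting equality $\tfrac{k!}{(k+1)(k+2)}|\alpha_k|^2=4(k+2)!\,|c_{k+2}|^2$, which is exactly the computation in the paper's proof. Your additional remarks on justifying the termwise application of $\Delta$ and on the uniqueness of the $\alpha_k$ are sound refinements of points the paper leaves implicit, but they do not change the argument.
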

 \begin{proof}
 The Fueter mapping theorem gives $\mathcal{A}(\Hq)\subset\mathcal{R}(\Hq).$ Then, we suppose that $g\in \mathcal{A}(\Hq)$, thus $g=\tau(f)$ where $f\in \mathcal{F}_{Slice}(\Hq)$. Then, according to \cite{AlpayColomboSabadini2014} we have $$f(q)=\displaystyle \sum_{k=0}^{\infty}q^kc_k , \text{ with } (c_k)\subset \Hq \text{ and } \Vert f \Vert^2_{\mathcal{F}_{Slice}(\Hq)}=\sum_{k=0}^{\infty}k!|c_k|^2<\infty.$$
 However, we know that $$\tau(1)=\tau(q)=0 \text{ and } \tau(q^k)=-2(k-1)kQ_{k-2}(q),\qquad \forall k\geq 2.$$
 Therefore, we get $$g(q)=\displaystyle\sum_{k=0}^\infty Q_k(q)\alpha_k \text{ with } \alpha_k=-2(k+1)(k+2)c_{k+2}, \qquad \forall k\geq 0,$$ moreover,
 $$\displaystyle \sum_{k=0}^{\infty}\frac{k!}{(k+1)(k+2)}|\alpha_k|^2=4\displaystyle \sum_{k=0}^{\infty}(k+2)!|c_{k+2}|^2\leq 4\Vert f \Vert^2_{\mathcal{F}_{Slice}(\Hq)} <\infty.$$

 Conversely, let us suppose that the conditions i) and ii) hold. Then, we consider the function $$h(q)=\displaystyle \sum_{k=2}^{\infty}q^k\beta_k , \text{ where } \beta_k=-\frac{\alpha_{k-2}}{2(k-1)k} ; \forall k\geq 2.$$
 Thus, we get $g=\tau(h)$ since
 $$Q_{k}(q)=-\frac{\tau(q^{k+2})}{2(k+1)(k+2)}, \qquad \forall k\geq 0.$$
 Moreover, note that we have
 $$\displaystyle \Vert h \Vert_{\mathcal{F}_{Slice}(\Hq)}^2=\sum_{k=2}^{\infty}k!|\beta_k|^2=\frac{1}{4}\sum_{k=0}^{\infty}\frac{k!}{(k+1)(k+2)}|\alpha_k|^2<\infty.$$
 Hence, $g=\tau(h)$ with $h\in \mathcal{F}_{Slice}(\Hq)$. In particular, it shows that $g\in \mathcal{A}(\Hq)$. This completes the proof.
\end{proof}
\begin{rem}
As a direct consequence of Theorem \ref{AHcar} we have
$$\mathcal{A}(\Hq)=\lbrace \displaystyle\sum_{k=0}^\infty Q_k(q)\alpha_k; \textbf{ } (\alpha_k)_{k\geq 0}\subset \Hq \text{ and } \sum_{k=0}^{\infty}\frac{k!}{(k+1)(k+2)}|\alpha_k|^2<\infty \rbrace.$$
\end{rem}
Given $f(q)=\displaystyle \sum_{k=0}^\infty Q_k(q)\alpha_k$ and $g(q)=\displaystyle \sum_{k=0}^\infty Q_k(q)\beta_k$ in $\mathcal{A}(\Hq)$ we define their inner product by $$\scal{f,g}_{\mathcal{A}(\Hq)}:=\displaystyle \sum_{k=0}^\infty \frac{k!}{(k+1)(k+2)}\overline{\beta_k}\alpha_k,$$
so that the associated norm is $$\|f\|^2=\scal{f,f}_{\mathcal{A}(\Hq)}:=\displaystyle \sum_{k=0}^\infty \frac{k!}{(k+1)(k+2)}|\alpha_k|^2.$$
Then, one can easily check the following properties
\begin{prop}
Let $f,g,h\in\mathcal{A}(\Hq)$ and $\lambda\in\Hq$. Then, we have:
\begin{enumerate}
\item[i)] $\overline{\scal{f,g}_{\mathcal{A}(\Hq)}}=\scal{g,f}_{\mathcal{A}(\Hq)}.$
\item[ii)] $\|f\|^2=\scal{f,f}_{\mathcal{A}(\Hq)}>0$ unless $f=0$.
\item[iii)] $\scal{f,g+h}_{\mathcal{A}(\Hq)}=\scal{f,g}_{\mathcal{A}(\Hq)}+\scal{f,h}_{\mathcal{A}(\Hq)}.$
\item[iv)] $\scal{f\lambda,g}_{\mathcal{A}(\Hq)}=\scal{f,g}_{\mathcal{A}(\Hq)}\lambda$ and $\scal{f,g\lambda}_{\mathcal{A}(\Hq)}=\overline{\lambda}\scal{f,g}_{\mathcal{A}(\Hq)}.$
\end{enumerate}
\end{prop}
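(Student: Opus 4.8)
The plan is to derive all four properties directly from the series definition of $\scal{\cdot,\cdot}_{\mathcal{A}(\Hq)}$, exploiting three elementary facts: the weights $c_k:=\frac{k!}{(k+1)(k+2)}$ are real and strictly positive; quaternionic conjugation is a real-linear anti-involution, i.e. $\overline{ab}=\bar b\,\bar a$, $\overline{a+b}=\bar a+\bar b$ and $\overline{\bar a}=a$; and, by Theorem \ref{AHcar}, the coefficient sequence of an element of $\mathcal{A}(\Hq)$ is square-summable against the weights $c_k$. Before anything else I would record that the defining series converges absolutely: writing $f=\sum_k Q_k\alpha_k$ and $g=\sum_k Q_k\beta_k$, the Cauchy--Schwarz inequality applied to $\sum_k c_k|\overline{\beta_k}\alpha_k|=\sum_k (\sqrt{c_k}|\beta_k|)(\sqrt{c_k}|\alpha_k|)$ bounds it by $\big(\sum_k c_k|\beta_k|^2\big)^{1/2}\big(\sum_k c_k|\alpha_k|^2\big)^{1/2}$, which is finite by condition ii) of Theorem \ref{AHcar}. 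Hence $\scal{f,g}_{\mathcal{A}(\Hq)}$ is a well-defined quaternion and the termwise manipulations below are legitimate.

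For (i) and (iii) I would argue term by term. Since each $c_k$ is real and conjugation reverses products, $\overline{c_k\,\overline{\beta_k}\alpha_k}=c_k\,\overline{\alpha_k}\,\overline{\overline{\beta_k}}=c_k\,\overline{\alpha_k}\beta_k$; summing over $k$ gives $\overline{\scal{f,g}_{\mathcal{A}(\Hq)}}=\sum_k c_k\overline{\alpha_k}\beta_k=\scal{g,f}_{\mathcal{A}(\Hq)}$, which is (i). For (iii), if $h=\sum_k Q_k\gamma_k$ then $g+h=\sum_k Q_k(\beta_k+\gamma_k)$, and using $\overline{\beta_k+\gamma_k}=\overline{\beta_k}+\overline{\gamma_k}$ together with left distributivity of the quaternionic product over addition, each term splits as $c_k(\overline{\beta_k}+\overline{\gamma_k})\alpha_k=c_k\overline{\beta_k}\alpha_k+c_k\overline{\gamma_k}\alpha_k$; summation yields $\scal{f,g+h}_{\mathcal{A}(\Hq)}=\scal{f,g}_{\mathcal{A}(\Hq)}+\scal{f,h}_{\mathcal{A}(\Hq)}$.

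Property (iv) is where the noncommutativity has to be handled with care, but it is still a one-line observation once the coefficients are identified. Right multiplication by $\lambda\in\Hq$ sends $f=\sum_k Q_k\alpha_k$ to $f\lambda=\sum_k Q_k(\alpha_k\lambda)$, so $\scal{f\lambda,g}_{\mathcal{A}(\Hq)}=\sum_k c_k\overline{\beta_k}(\alpha_k\lambda)=\big(\sum_k c_k\overline{\beta_k}\alpha_k\big)\lambda=\scal{f,g}_{\mathcal{A}(\Hq)}\lambda$, using right distributivity to factor $\lambda$ out on the right. Similarly $g\lambda=\sum_k Q_k(\beta_k\lambda)$, and since $\overline{\beta_k\lambda}=\overline{\lambda}\,\overline{\beta_k}$ we get $\scal{f,g\lambda}_{\mathcal{A}(\Hq)}=\sum_k c_k\overline{\lambda}\,\overline{\beta_k}\alpha_k=\overline{\lambda}\sum_k c_k\overline{\beta_k}\alpha_k=\overline{\lambda}\,\scal{f,g}_{\mathcal{A}(\Hq)}$, which is (iv).

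Finally, for the positive-definiteness (ii) I would compute $\scal{f,f}_{\mathcal{A}(\Hq)}=\sum_k c_k\overline{\alpha_k}\alpha_k=\sum_k c_k|\alpha_k|^2$, a sum of nonnegative reals since each $c_k>0$; hence $\|f\|^2\ge 0$, with equality if and only if $\alpha_k=0$ for every $k$. The only point that is not a pure termwise manipulation --- and the one I would flag as the genuine content of (ii) --- is the implication that $\alpha_k=0$ for all $k$ is equivalent to $f=0$. This is the uniqueness of the coefficients in the expansion $f=\sum_k Q_k\alpha_k$, which holds because the $Q_k$ are homogeneous monogenic polynomials of pairwise distinct degrees $k$ (see Remark \ref{remQ}) and are therefore $\Hq$-linearly independent: in a vanishing series each homogeneous component must vanish separately, and $Q_k(q)=\sum_{j} T^k_j q^{k-j}\overline{q}^j$ with leading weight $T^k_0=\frac{2}{(k+1)(k+2)}\neq 0$ is not identically zero, forcing $\alpha_k=0$. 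Granting this, $\scal{f,f}_{\mathcal{A}(\Hq)}=0$ if and only if $f=0$, which completes (ii) and hence the proposition.
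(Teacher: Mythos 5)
Your proof is correct and is exactly the ``classical argument'' the paper invokes without writing out: the paper's entire proof of this proposition is the single line ``This statement follows using classical arguments,'' and your term-by-term verification (with the Cauchy--Schwarz check of absolute convergence and the uniqueness-of-coefficients point for (ii), which is indeed the only non-trivial content) is the standard way to supply it. The one immaterial slip is the value of the leading coefficient: $T^k_0=\frac{2(k+1)}{(k+1)(k+2)}=\frac{2}{k+2}$, not $\frac{2}{(k+1)(k+2)}$ (that is $T^k_k$); all your argument needs is that $Q_k$ is not identically zero, which also follows at once from $Q_k(1)=\sum_{j=0}^k T^k_j=1$.
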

\begin{proof}
This statement follows using classical arguments.
\end{proof}
Now, for all $k\geq 0,$ we consider the quaternionic regular polynomials defined by \begin{equation}
T_k(q)=\displaystyle\sqrt{\frac{(k+1)(k+2)}{k!}}Q_k(q), \qquad \text{ } \forall q\in\Hq,
\end{equation}
and we introduce the following:
\begin{defn}
For all $(p,q)\in\Hq\times\Hq$, we define the function
\begin{equation}\label{circle}
G(p,q)=G_q(p):=\displaystyle \sum_{k=0}^{\infty}T_k(p)\overline{T_{k}(q)}.
\end{equation}
\end{defn}

Note that, for any $(q,p)\in\Hq\times\Hq$ we have:

\begin{enumerate}
\item[i)] $|G(p,q)|\leq \displaystyle \sum_{k=0}^\infty\frac{(k+1)(k+2)}{k!}|pq|^k<\infty.$
\item[ii)] $\overline{G(p,q)}=G(q,p).$
\item[iii)] $G(q,q)=\displaystyle \sum_{k=0}^\infty |T_k(q)|^2<\infty.$
\end{enumerate}

Let us prove that all the evaluation mappings are continuous on $\mathcal{A}(\Hq)$. Indeed, we have
\begin{prop} \label{evm}
Let $q,q'\in\Hq$, then we have:
\begin{enumerate}
\item[i)] The function $G_q:p\longmapsto G_q(p)=G(p,q)$ belongs to $\mathcal{A}(\Hq)$.
\item[ii)] The evaluation mapping $\Lambda_q: f\longmapsto \Lambda_q(f)=f(q)$ is a continuous linear functional on $\mathcal{A}(\Hq)$. Moreover, for any $f\in\mathcal{A}(\Hq)$ we have
$$ |\Lambda_q(f)|=|f(q)| \leq  \|G_q\|_{\mathcal{A}(\Hq)}\|f\|_{\mathcal{A}(\Hq)}.$$
\item[iii)] $\scal{G_{q'},G_q}_{\mathcal{A}(\Hq)}=G(q,q').$
\end{enumerate}
\end{prop}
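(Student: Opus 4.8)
The plan is to reduce all three assertions to a single structural observation: under the chosen normalization the polynomials $\lbrace T_k\rbrace_{k\geq 0}$ form an orthonormal family in $\mathcal{A}(\Hq)$, after which parts i) and iii) become direct series computations and part ii) becomes a reproducing-kernel identity closed off by Cauchy--Schwarz. First I would record that, writing $c_k:=\sqrt{(k+1)(k+2)/k!}$ so that $T_k=c_k Q_k$, the definition of the inner product gives
$$\scal{T_m,T_n}_{\mathcal{A}(\Hq)}=\frac{m!}{(m+1)(m+2)}\,c_m^2\,\delta_{m,n}=\delta_{m,n},$$
since $c_k$ is a positive real (hence central) scalar and the coefficient sequence of $T_m$ in the $Q_k$-expansion is supported at the single index $m$.

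For part i) I would expand $G_q(p)=\sum_{k}Q_k(p)\,\alpha_k^{(q)}$ with $\alpha_k^{(q)}=c_k\overline{T_k(q)}$, obtained from $T_k(p)=c_k Q_k(p)$. A one-line cancellation then yields $\tfrac{k!}{(k+1)(k+2)}|\alpha_k^{(q)}|^2=|T_k(q)|^2$, so that $\norm{G_q}_{\mathcal{A}(\Hq)}^2=\sum_k|T_k(q)|^2=G(q,q)<\infty$ by property iii) of $G$; hence $G_q\in\mathcal{A}(\Hq)$. Part iii) is the same bookkeeping carried out at two points: computing $\scal{G_{q'},G_q}_{\mathcal{A}(\Hq)}$ from the coefficient sequences $\alpha_k^{(q')}=c_k\overline{T_k(q')}$ and $\beta_k^{(q)}=c_k\overline{T_k(q)}$, the weights $\tfrac{k!}{(k+1)(k+2)}c_k^2$ collapse to $1$, leaving $\sum_k T_k(q)\overline{T_k(q')}=G(q,q')$. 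Here I must keep the quaternionic conjugation order straight, using $\overline{ab}=\overline{b}\,\overline{a}$ and the centrality of $c_k$.

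For part ii) the crux is the reproducing identity $f(q)=\scal{f,G_q}_{\mathcal{A}(\Hq)}$. Writing $f=\sum_k Q_k\alpha_k$ and using $\overline{\beta_k^{(q)}}=c_k T_k(q)$ together with $\tfrac{k!}{(k+1)(k+2)}c_k=c_k^{-1}$, the inner product becomes $\sum_k c_k^{-1}T_k(q)\alpha_k=\sum_k Q_k(q)\alpha_k=f(q)$. Before concluding I would justify that both the defining series for $f(q)$ and this inner-product series converge absolutely, which follows from condition ii) of Theorem \ref{AHcar}, the estimate $|Q_k(q)|\leq|q|^k$, and Cauchy--Schwarz applied to the scalar weights. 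The stated bound $|f(q)|=|\scal{f,G_q}_{\mathcal{A}(\Hq)}|\leq\norm{G_q}_{\mathcal{A}(\Hq)}\norm{f}_{\mathcal{A}(\Hq)}$ then follows from the Cauchy--Schwarz inequality for the quaternionic inner product. I expect this last point to be the only genuinely delicate step: verifying Cauchy--Schwarz for the right $\Hq$-Hilbert structure recorded in the preceding proposition requires a little care because of non-commutativity, whereas everything else is termwise algebra driven by the orthonormality of the $T_k$.
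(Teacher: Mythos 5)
Your proposal is correct, and parts i) and iii) coincide with the paper's argument up to rephrasing: the paper also writes $G_q(p)=\sum_k Q_k(p)\alpha_k(q)$ with $\alpha_k(q)=\frac{(k+1)(k+2)}{k!}\overline{Q_k(q)}$ (which is exactly your $c_k\overline{T_k(q)}$), computes $\norm{G_q}^2_{\mathcal{A}(\Hq)}=\sum_k\frac{(k+1)(k+2)}{k!}|Q_k(q)|^2$, and bounds it by $\sum_k\frac{(k+1)(k+2)}{k!}|q|^{2k}<\infty$ using $|Q_k(q)|\le|q|^k$ — the same finiteness you invoke via $G(q,q)<\infty$. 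Part iii) is the identical weight cancellation. The genuine divergence is in part ii): the paper never touches the quaternionic Cauchy--Schwarz inequality there; it estimates $|f(q)|\le\sum_k|Q_k(q)||\alpha_k|$ and applies the \emph{scalar} Cauchy--Schwarz inequality to the two real sequences $\left(\sqrt{\tfrac{(k+1)(k+2)}{k!}}|Q_k(q)|\right)_k$ and $\left(\sqrt{\tfrac{k!}{(k+1)(k+2)}}|\alpha_k|\right)_k$, which immediately yields $|f(q)|\le\norm{G_q}\,\norm{f}$ with no noncommutativity issues and no need for the reproducing identity. You instead prove $f(q)=\scal{f,G_q}_{\mathcal{A}(\Hq)}$ first (this is the content of the theorem that follows Proposition \ref{evm} in the paper) and then appeal to Cauchy--Schwarz for the right $\Hq$-inner product. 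Your route is valid and not circular — the reproducing identity is a direct coefficient computation independent of part ii) — and it buys you the next theorem for free; but it does make the proof rest on the quaternionic Cauchy--Schwarz inequality, which you correctly flag as the step needing justification (it holds for the right-linear inner product with the properties listed in the preceding proposition, but it is not stated in the paper). The paper's version is the more elementary and self-contained of the two for this particular estimate.
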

\begin{proof}
\begin{enumerate}
\item[i)] Note that by definition of the polynomials $(T_k(q))_{k\geq 0}$, for any fixed $q\in\Hq$ we have  \begin{equation}\label{star}
    G_q(p)= \displaystyle\sum_{k=0}^\infty Q_k(p)\alpha_k(q)\ \text{ with } \ \alpha_k(q)=\frac{(k+1)(k+2)}{k!}\overline{Q_k(q)}\in\Hq, \ \ \ \forall k\geq 0.
    \end{equation}
    Moreover, observe that \begin{equation}\label{twostar}\begin{split}
 \displaystyle \|G_q\|_{\mathcal{A}(\Hq)}^2 & = \sum_{k=0}^\infty \frac{k!}{(k+1)(k+2)}|\alpha_k(q)|^2 \\
&=\sum_{k=0}^\infty \frac{(k+1)(k+2)}{k!}|Q_k(q)|^2\\
&\leq \sum_{k=0}^\infty \frac{(k+1)(k+2)}{k!}|q|^{2k}<\infty.\\
\end{split}
\end{equation}
This shows that $G_q\in\mathcal{A}(\Hq)$ for any $q\in\Hq$.
\item[ii)] If $f\in\mathcal{A}(\Hq)$, then by definition we have
 $$f(q)=\displaystyle\sum_{k=0}^\infty Q_k(q)\alpha_k \text{ and }\|f\|_{\mathcal{A}(\Hq)}^2= \sum_{k=0}^\infty \frac{k!}{(k+1)(k+2)}|\alpha_k|^2<\infty.$$
Therefore, making use of the Cauchy-Schwarz inequality we get \[ \begin{split}
 \displaystyle |\Lambda_q(f)| & = |f(q)|\\
&\leq \sum_{k=0}^\infty |Q_k(q)||\alpha_k|\\
&\leq \left(\sum_{k=0}^\infty \frac{(k+1)(k+2)}{k!}|Q_k(q)|^{2}\right)^{\frac{1}{2}} \left(\sum_{k=0}^\infty \frac{k!}{(k+1)(k+2)}|\alpha_k|^{2}\right)^{\frac{1}{2}}\\
& =\|G_q\|_{\mathcal{A}(\Hq)}\|f\|_{\mathcal{A}(\Hq)}.\\
\end{split}
\]
\item[iii)] Let $q,q'\in \Hq$ be such that
$$G_q(p)=\displaystyle \sum_{k=0}^\infty Q_k(p)\alpha_k(q) \text{ and }G_{q'}(p)=\displaystyle \sum_{k=0}^\infty Q_k(p)\alpha_k(q')$$ where we have set $\displaystyle\alpha_k(w)=\frac{(k+1)(k+2)}{k!}\overline{Q_k(w)}$ for any $w\in\Hq$ and $k\geq 0$. Therefore, we get  \[ \begin{split}
 \displaystyle \scal{G_{q'},G_q}_{\mathcal{A}(\Hq)} & = \sum_{k=0}^\infty \frac{k!}{(k+1)(k+2)}\overline{\alpha_k(q)}\alpha_k(q') \\
&=\sum_{k=0}^\infty T_k(q)\overline{T_k(q')}\\
&= G_{q'}(q)=G(q,q').\\
\end{split}
\]
\end{enumerate}
\end{proof}
As a consequence we prove the following result
\begin{thm}
The set $\mathcal{A}(\Hq)$ is a right quaternionic reproducing kernel Hilbert space whose reproducing kernel is given by the kernel function $G:\Hq\times\Hq\longrightarrow \Hq$ defined in \eqref{circle}. Moreover, for any $q\in\Hq$ and $f\in\mathcal{A}(\Hq)$ we have $$f(q)=\scal{f,G_q}_{\mathcal{A}(\Hq)}.$$
\end{thm}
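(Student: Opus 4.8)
The plan is to assemble the pieces already established into the three assertions of the theorem: that $\mathcal{A}(\Hq)$ is a Hilbert space, that its evaluation functionals are continuous, and that $G$ reproduces. The only genuinely new ingredient needed is completeness; the reproducing identity is then a one-line computation, and continuity of evaluation has already been obtained.

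First I would verify that $\mathcal{A}(\Hq)$ is complete. By Theorem \ref{AHcar} every $f\in\mathcal{A}(\Hq)$ has an expansion $f(q)=\sum_{k=0}^\infty Q_k(q)\alpha_k$, and this expansion is unique because the $Q_k$ are homogeneous spherical monogenics of pairwise distinct degrees $k$, hence right-linearly independent, so the coefficients $\alpha_k$ are determined by $f$. The map $f\mapsto(\alpha_k)_{k\geq0}$ is thus a bijection onto the weighted sequence space $\ell^2_w(\Hq):=\{(\alpha_k)\subset\Hq:\sum_k w_k|\alpha_k|^2<\infty\}$ with real positive weights $w_k=\frac{k!}{(k+1)(k+2)}$. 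Since $\|f\|^2=\sum_k w_k|\alpha_k|^2$ by the definition of the norm, this bijection is an isometry; as $\ell^2_w(\Hq)$ is a complete right quaternionic Hilbert space, so is $\mathcal{A}(\Hq)$. Together with the inner-product axioms checked in the preceding Proposition, this establishes that $\mathcal{A}(\Hq)$ is a right quaternionic Hilbert space. Proposition \ref{evm} ii) shows that for each $q$ the evaluation $\Lambda_q$ is a bounded right-linear functional with $|\Lambda_q(f)|\leq \|G_q\|_{\mathcal{A}(\Hq)}\|f\|_{\mathcal{A}(\Hq)}$; combined with completeness, this is precisely the statement that $\mathcal{A}(\Hq)$ is a right quaternionic reproducing kernel Hilbert space.

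Finally I would verify the reproducing property and identify the kernel. Writing $f(p)=\sum_k Q_k(p)\alpha_k$ and using \eqref{star}, namely $G_q(p)=\sum_k Q_k(p)\alpha_k(q)$ with $\alpha_k(q)=\frac{(k+1)(k+2)}{k!}\overline{Q_k(q)}$, the real-valuedness of the weights gives $\overline{\alpha_k(q)}=\frac{(k+1)(k+2)}{k!}Q_k(q)$, so that
\[
\scal{f,G_q}_{\mathcal{A}(\Hq)}=\sum_{k=0}^\infty \frac{k!}{(k+1)(k+2)}\,\overline{\alpha_k(q)}\,\alpha_k=\sum_{k=0}^\infty Q_k(q)\alpha_k=f(q).
\]
Thus $G_q$ is the reproducing element at $q$, and by Proposition \ref{evm} iii) we have $\scal{G_{q'},G_q}_{\mathcal{A}(\Hq)}=G(q,q')$, so $G$ is the reproducing kernel; its uniqueness follows from the Riesz representation of the bounded functionals $\Lambda_q$. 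The main obstacle is the completeness step, and more precisely being careful that the coefficient map is a genuine isometric isomorphism onto the weighted $\ell^2$ space (which rests on the right-linear independence of the $Q_k$), together with keeping track of right-linearity and of the conjugation convention so that the real weights pass correctly through the inner product.
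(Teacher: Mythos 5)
Your proof is correct and follows essentially the same route as the paper: continuity of the evaluation functionals and membership of $G_q$ are quoted from Proposition \ref{evm}, and the reproducing identity is obtained by the same coefficient computation using \eqref{star} and the real-valuedness of the weights. The only difference is that you explicitly verify completeness via the isometric coefficient map onto a weighted $\ell^2$ space (using the right-linear independence of the $Q_k$), a point the paper leaves implicit; this is a sound and welcome addition but not a different method.
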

\begin{proof}
According to Proposition \ref{evm} we know that all the evaluation mappings are continuous on $\mathcal{A}(\Hq)$ and $G_q\in\mathcal{A}(\Hq)$ for any $q\in\Hq$. So, we only need to prove the reproducing kernel property. Indeed, let $q\in\Hq$ and $f\in\mathcal{A}(\Hq)$ be such that $f(p)=\displaystyle\sum_{k=0}^\infty Q_k(p)\beta_k,$ for any $p\in\Hq$. Using \eqref{star} we obtain
\[ \begin{split}
 \displaystyle \scal{f,G_q}_{\mathcal{A}(\Hq)} & = \sum_{k=0}^\infty \frac{k!}{(k+1)(k+2)}\overline{\alpha_k(q)}\beta_k \\
&=\sum_{k=0}^\infty Q_k(q)\beta_k\\
&= f(q).\\
\end{split}
\]
This completes the proof.
\end{proof}
We can factorize the Bargmann-Fock-Fueter transform thanks to the following:
\begin{thm} \label{SH}
The Bargmann-Fock-Fueter transform can be realized by the commutative diagram $$\mathcal{S}_\Hq: \xymatrix{
    L^2_\Hq(\R) \ar[r] \ar[d]_{\mathcal{B}_\Hq} & \mathcal{A}(\Hq)  \\ \mathcal{F}^{2}_{Slice}(\Hq) \ar[r]_{Id} & \mathcal{SR}(\Hq) \ar[u]_{\tau}
  }$$
  so that $$\mathcal{S}_\Hq:=\tau\circ Id \circ \mathcal{B}_\Hq.$$
More precisely, for any $\varphi\in L^2_\Hq(\R)$,  and $q\in\Hq$, we have $$\mathcal{S}_\Hq\varphi(q)=\breve{f_\varphi}(q)=\displaystyle\int_{\R}\Phi(q,x)\varphi(x)dx;$$ where $$\Phi(q,x)=\displaystyle -\frac{1}{\pi^{\frac{1}{4}}}\sum_{k=0}^{\infty}\frac{Q_k(q) h_{k+2}(x)}{2^{\frac{k}{2}}k!}; \text{ }\forall(q,x)\in\Hq\times\R. $$
 \end{thm}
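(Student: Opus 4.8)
The plan is to establish the commutative diagram first and then read off the explicit kernel $\Phi$ from the series expansions already obtained. For the diagram, the key observation is that the Fock-Fueter transform agrees with the Fueter map $\tau$ on the whole Fock space. Indeed, since $K_\Hq$ is the reproducing kernel of $\mathcal{F}_{Slice}(\Hq)$, every $f\in\mathcal{F}_{Slice}(\Hq)$ satisfies the reproducing identity $f(q)=\int_{\C_I}K_\Hq(q,p)f(p)\,d\mu_I(p)$. Applying the Laplacian $\Delta=\Delta_q$ in the variable $q$ and moving it under the integral sign (legitimate because the series defining $K_\Hq$ converges locally uniformly together with its derivatives, and $f$ is integrable against the Gaussian measure) turns the kernel into $\Delta_q K_\Hq(q,p)=K_\mathcal{F}(q,p)$ by definition, whence $\tau(f)(q)=\Delta f(q)=\int_{\C_I}K_\mathcal{F}(q,p)f(p)\,d\mu_I(p)=\breve f(q)$. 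Taking $f=f_\varphi=\mathcal{B}_\Hq\varphi$ gives $\breve{f_\varphi}=\tau(\mathcal{B}_\Hq\varphi)=\mathcal{S}_\Hq\varphi$, which is exactly the commutativity of the diagram.

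For the explicit formula, I start from Proposition \ref{BFk1}, which already expresses $\breve{f_\varphi}$ as an integral against the kernel $\Phi(q,x)=\int_{\C_I}K_\mathcal{F}(q,p)A(p,x)\,d\mu_I(p)$. Inserting the series $K_\mathcal{F}(q,p)=-2\sum_{k\geq 0}\frac{Q_k(q)}{k!}\bar p^{k+2}$ from Proposition \ref{FKex} and interchanging the sum with the integral reduces the computation to the moments $\int_{\C_I}\bar p^{k+2}A(p,x)\,d\mu_I(p)$. The interchange is justified by the estimate $|K_\mathcal{F}(q,p)|\leq 2|p|^2e^{|qp|}$ together with the Gaussian factor $e^{-|p|^2}$ in $d\mu_I$ and the explicit Gaussian decay of $A(p,x)$ in $p$, which makes the double series absolutely convergent so that Fubini's theorem and term-by-term integration apply.

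The remaining moment integral is evaluated by expanding the Segal-Bargmann kernel through the Hermite generating function: writing $A(p,x)=\pi^{-1/4}e^{-x^2/2}e^{-p^2/2+\sqrt 2\,px}$ and using $e^{2tx-t^2}=\sum_{n\geq 0}\frac{H_n(x)}{n!}t^n$ with $t=p/\sqrt 2$ yields an expansion of $A(p,x)$ as a power series in $p$ whose coefficients are scalar multiples of the Hermite functions $h_n(x)$. Since the monomials $f_n(p)=p^n$ are orthogonal in $\mathcal{F}_{Slice}(\Hq)$ with $\scal{f_m,f_n}_{\mathcal{F}_{Slice}(\Hq)}=m!\,\delta_{m,n}$, the integral $\frac1\pi\int_{\C_I}\bar p^{k+2}p^n e^{-|p|^2}\,d\lambda_I(p)=(k+2)!\,\delta_{n,k+2}$ selects only the term $n=k+2$, so that $\int_{\C_I}\bar p^{k+2}A(p,x)\,d\mu_I(p)$ is a fixed scalar multiple of $h_{k+2}(x)$. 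Substituting back and keeping track of the powers of $2$ and the factor $\pi^{-1/4}$ produces the claimed identity $\Phi(q,x)=-\pi^{-1/4}\sum_{k\geq 0}\frac{Q_k(q)h_{k+2}(x)}{2^{k/2}k!}$. The only genuinely delicate point is this last normalization bookkeeping, namely matching the constant from the Hermite generating function, the factor $2^{-(k+2)/2}$ coming from the substitution $t=p/\sqrt2$, and the factors $-2$ and $(k+2)!$ produced by the Fock-space moments, so that the net coefficient collapses to $2^{-k/2}/k!$; everything else is a routine consequence of the orthogonality relations and the convergence estimates already established.
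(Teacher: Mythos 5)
Your proof is correct, and its second half coincides with the paper's, but your route to the commutativity of the diagram is genuinely different. You first establish the operator identity $\breve{f}=\tau(f)$ for \emph{every} $f\in\mathcal{F}_{Slice}(\Hq)$ by applying $\Delta_q$ under the integral sign in the reproducing formula $f(q)=\int_{\C_I}K_{\Hq}(q,p)f(p)\,d\mu_I(p)$, so that the kernel becomes $K_{\mathcal F}(q,p)=\Delta_q K_{\Hq}(q,p)$ by definition; commutativity then follows instantly by taking $f=f_\varphi$. The paper never isolates this general statement: it writes $\mathcal S_{\Hq}\varphi(q)=\int_{\R}\Delta A(q,x)\varphi(x)\,dx$ and reduces the theorem, via Proposition \ref{BFk1}, to the single kernel identity $\Delta A(q,x)=\Phi(q,x)$, which it verifies by computing both sides as series (the left side term by term using $\Delta(q^k)=-2(k-1)kQ_{k-2}(q)$, the right side by exactly the moment computation you perform). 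Your first step is the more conceptual one --- it exhibits the Fock--Fueter transform as the Fueter map on the whole Fock space --- at the price of a differentiation-under-the-integral justification that you only sketch (one needs locally uniform bounds in $q$ on the second derivatives of $K_{\Hq}(q,p)$, integrable against the Gaussian in $p$; this is routine and no less rigorous than the paper's own unremarked interchanges). For the explicit kernel the two arguments agree: expand $K_{\mathcal F}$ by Proposition \ref{FKex}, integrate term by term against $A(p,x)$, and use $\scal{p^{j},p^{k+2}}_{\mathcal{F}_{Slice}(\Hq)}=(k+2)!\,\delta_{j,k+2}$; the only cosmetic difference is that you rederive the expansion $A(p,x)=\pi^{-1/4}\sum_{n\ge 0}p^{n}h_n(x)/(2^{n/2}n!)$ from the Hermite generating function, whereas the paper cites it from \cite{DG1.2017}. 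Your constant bookkeeping is right: the moment integral yields $h_{k+2}(x)/\bigl(2\pi^{1/4}2^{k/2}\bigr)$, which against the prefactor $-2/k!$ from $K_{\mathcal F}$ gives the stated coefficient $-\pi^{-1/4}2^{-k/2}/k!$.
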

 \begin{proof}
 Let $\varphi\in L^2_\Hq(\R)$ and $q\in\Hq$, observe that \[ \begin{split}
 \displaystyle \mathcal{S}_\Hq [\varphi](q)& =\tau\circ Id \circ \mathcal{B}_\Hq [\varphi](q) \\
&= \int_{\R} \Delta A(q,x) \varphi(x)dx.\\
\end{split}
\]
Thus, by Proposition \ref{BFk1} we only need to prove that $$\Delta A(q,x)=\Phi(q,x) \text{ } \forall(q,x)\in\Hq\times\R,$$
where $$\Phi(q,x)=\displaystyle\int_{\C_I}K_{\mathcal{F}}(q,p)A(p,x)d\mu_I(p).$$ Indeed, note that according to Proposition 4.1 in \cite{DG1.2017} for all $(q,x)\in\Hq\times\R$ we have the following expansion of the Segal-Bargmann kernel

$$\displaystyle A(q,x)=\sum_{k=0}^{\infty}\frac{q^k }{\|q^k\|}\frac{h_k(x)}{\|h_k\|}, $$ where $\{h_k\}_{k\geq 0}$ stands for the well-known Hermite functions forming an orthogonal basis of $L^2_\Hq(\R)$.
Therefore, on the one hand we have \[ \begin{split}
 \displaystyle \Delta A(q,x) & = \frac{1}{\pi^{\frac{1}{4}}}\sum_{k=2}^\infty \frac{\Delta(q^k)h_k(x)}{k!2^{\frac{k}{2}}}\\
&=-\frac{2}{\pi^{\frac{1}{4}}}\sum_{k=2}^\infty \frac{Q_{k-2}(q)h_k(x)}{(k-2)!2^{\frac{k}{2}}}\\
&=-\frac{1}{\pi^{\frac{1}{4}}}\sum_{k=0}^\infty \frac{Q_{k}(q)h_{k+2}(x)}{k!2^{\frac{k}{2}}}.\\
\end{split}
\]
On the other hand, making use of Proposition \ref{FKex} combined with the expansion of the Segal-Bargmann kernel we get
\[ \begin{split}
 \displaystyle \Phi(q,x) & = \int_{\C_I}K_{\mathcal{F}}(q,p)A(p,x)d\mu_I(p)\\
&=-\frac{2}{\pi^{\frac{1}{4}}}\int_{\C_I}\left(\sum_{k=0}^\infty\frac{Q_k(q)}{k!}\bar{p}^{k+2}\right)\left(\sum_{j=0}^\infty\frac{p^j}{j!2^{\frac{j}{2}}}h_j(x)\right) d\mu_I(p)\\
&=-\frac{2}{\pi^{\frac{1}{4}}}\sum_{k,j=0}^\infty\frac{Q_k(q)}{k!j!}\frac{h_j(x)}{2^{\frac{j}{2}}}\scal{p^{j},p^{k+2}}_{\mathcal{F}_{Slice}(\Hq)}\\
&=-\frac{2}{\pi^{\frac{1}{4}}}\sum_{l=2,j=0}^\infty\frac{Q_{l-2}(q)}{(l-2)!j!}\frac{h_j(x)}{2^{\frac{j}{2}}}\scal{p^{j},p^{l}}_{\mathcal{F}_{Slice}(\Hq)} \\
&=-\frac{2}{\pi^{\frac{1}{4}}}\sum_{l=2,j=0}^\infty\frac{Q_{l-2}(q)}{(l-2)!}\frac{h_j(x)}{2^{\frac{j}{2}}}\delta_{l,j}\\
&=-\frac{2}{\pi^{\frac{1}{4}}}\sum_{l=2}^\infty\frac{Q_{l-2}(q)}{(l-2)!}\frac{h_l(x)}{2^{\frac{l}{2}}}\\
&=-\frac{1}{\pi^{\frac{1}{4}}}\sum_{k=0}^\infty \frac{Q_{k}(q)h_{k+2}(x)}{k!2^{\frac{k}{2}}}.\\
\end{split}
\]
This completes the proof.
\end{proof}
\begin{prop} \label{kerpp}
For all $(q,p)\in\Hq\times\Hq$ we have $$\displaystyle\int_\R\Phi(q,x)\Phi(p,x)dx=4\sum_{k=0}^{\infty}T_k(q)T_k(p).$$
\end{prop}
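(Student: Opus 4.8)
The plan is to substitute the series expansion of $\Phi$ obtained in Theorem \ref{SH} into the left-hand side and exploit the orthogonality of the Hermite functions. First I would write
$$\Phi(q,x)=-\frac{1}{\pi^{\frac14}}\sum_{k=0}^\infty\frac{Q_k(q)h_{k+2}(x)}{2^{\frac{k}{2}}k!},\qquad \Phi(p,x)=-\frac{1}{\pi^{\frac14}}\sum_{j=0}^\infty\frac{Q_j(p)h_{j+2}(x)}{2^{\frac{j}{2}}j!},$$
and form the product $\Phi(q,x)\Phi(p,x)$. The crucial observation is that each Hermite function $h_n$ is real-valued, hence a central scalar, so it commutes with every quaternionic coefficient; this lets me pull the factor $h_{k+2}(x)h_{j+2}(x)$ out of the quaternionic product $Q_k(q)Q_j(p)$ \emph{without altering the order} of the latter.

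Next I would integrate term by term over $\R$. Assuming the interchange of summation and integration, which is justified by the polynomial bound $|Q_k(q)|\leq |q|^k$ established in the proof of the estimate for $K_{\mathcal F}$ together with the rapid decay of the Hermite functions, the orthogonality relation $\int_\R h_m(x)h_n(x)\,dx=\delta_{m,n}\|h_n\|^2$ collapses the double sum into a single one indexed by $k=j$. At this point I need the explicit value of the norm $\|h_{k+2}\|$: comparing the expansion $A(q,x)=\sum_k \frac{q^k}{\|q^k\|}\frac{h_k(x)}{\|h_k\|}$ from \cite{DG1.2017} with the normalization $\|q^k\|^2=k!$ of the slice Fock space yields $\|h_n\|^2=2^n n!\sqrt{\pi}$, so that $\|h_{k+2}\|^2=2^{k+2}(k+2)!\sqrt{\pi}$.

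Substituting this value, the scalar prefactors combine cleanly,
$$\frac{1}{\pi^{\frac12}}\cdot\frac{1}{2^{k}(k!)^2}\cdot 2^{k+2}(k+2)!\sqrt{\pi}=4\,\frac{(k+1)(k+2)}{k!},$$
leaving
$$\int_\R\Phi(q,x)\Phi(p,x)\,dx=4\sum_{k=0}^\infty\frac{(k+1)(k+2)}{k!}\,Q_k(q)Q_k(p).$$
Finally, recalling $T_k(q)=\sqrt{\tfrac{(k+1)(k+2)}{k!}}\,Q_k(q)$ and using that this normalizing factor is a positive real scalar, I recognize $\frac{(k+1)(k+2)}{k!}Q_k(q)Q_k(p)=T_k(q)T_k(p)$, which gives the asserted identity.

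The only genuinely delicate point is the non-commutativity of the quaternions: one must keep the product $Q_k(q)Q_k(p)$ in its original order throughout, and the whole argument hinges on the fact that every factor that gets commuted or extracted, namely the Hermite functions $h_n$, the norms $\|h_n\|$, and the coefficients $\sqrt{(k+1)(k+2)/k!}$, is real and therefore central in $\Hq$. Everything else reduces to the bookkeeping of real constants carried out above.
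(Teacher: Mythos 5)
Your proof is correct and follows essentially the same route as the paper: expand $\Phi$ via Theorem \ref{SH}, integrate term by term, use Hermite orthogonality with $\|h_{k+2}\|^2=2^{k+2}(k+2)!\sqrt{\pi}$, and absorb the resulting factor $\frac{(k+1)(k+2)}{k!}$ into $T_k(q)T_k(p)$. Your explicit remarks on the centrality of the real scalars and the preserved order of $Q_k(q)Q_k(p)$ make precise a point the paper leaves implicit, but the argument is the same.
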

\begin{proof}
 Let  $(q,p)\in\Hq\times\Hq$, then
 \[ \begin{split}
 \displaystyle \int_\R\Phi(q,x)\Phi(p,x)dx & = \frac{1}{\sqrt{\pi}}\int_\R\left(\sum_{k=0}^\infty\frac{Q_k(q)h_{k+2}(x)}{2^{\frac{k}{2}}k!}\right) \left(\sum_{j=0}^\infty\frac{Q_j(p)h_{j+2}(x)}{2^{\frac{j}{2}}j!}\right) \\
&=\frac{1}{\sqrt{\pi}}\sum_{k,j=0}^\infty\frac{Q_k(q)Q_j(p)}{k!j!2^{\frac{k+j}{2}}} \int_\R h_{k+2}(x)h_{j+2}(x)dx \\
&=\frac{1}{\sqrt{\pi}}\sum_{k=0}^\infty\frac{Q_k(q)Q_k(p)}{(k!)^2 2^{k}} \Vert h_{k+2} \Vert^2.\\
\end{split}
\]
Therefore, making use of the orthogonality of Hermite functions we get
 \[ \begin{split}
 \displaystyle \int_\R\Phi(q,x)\Phi(p,x)dx & = 4\sum_{k=0}^{\infty} \frac{(k+1)(k+2)}{k!}Q_k(q)Q_k(p)\\
&=4\sum_{k=0}^{\infty}T_k(q)T_k(p).\\
\end{split}
\]
\end{proof}
\begin{rem}
Recalling that  $L^2_\Hq(\R)$ is endowed with the scalar product
$$\scal{\phi_1,\phi_2}=\displaystyle \int_\R \overline{\phi_2(x)}\phi_1(x)dx, \text{ } \forall \phi_1,\phi_2\in L^2_\Hq(\R),$$
as a consequence of Proposition \ref{kerpp} and of \eqref{circle} we get $$G(q,\overline{p})=\frac{1}{4}\scal{\Phi_p,\Phi_{\overline{q}}}, \text{ } \forall (q,p)\in\Hq\times\Hq.$$

\end{rem}
 \begin{cor}
 For all $q\in\Hq,$ the function $\Phi_q:x\longmapsto \Phi_q(x):=\Phi(q,x)$ belongs to $L^2_\Hq(\R)$ and $$\displaystyle \|\Phi_q\|_{ L^2_\Hq(\R)}=2 \left( \sum_{k=0}^\infty \frac{(k+1)(k+2)}{k!}|Q_k(q)|^2\right)^{\frac{1}{2}} < \infty. $$
 Moreover, for any $\varphi\in L^2_\Hq(\R)$ we have
 $$|\mathcal{S}_\Hq \varphi(q)| \leq\|\Phi_q\|_{ L^2_\Hq(\R)}\|\varphi\|_{ L^2_\Hq(\R)}.$$
\end{cor}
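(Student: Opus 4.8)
The plan is to compute the $L^2_\Hq(\R)$-norm of $\Phi_q$ directly from the series representation
$$\Phi(q,x)=-\frac{1}{\pi^{1/4}}\sum_{k=0}^\infty\frac{Q_k(q)h_{k+2}(x)}{2^{k/2}k!}$$
established in Theorem \ref{SH}, essentially repeating the calculation of Proposition \ref{kerpp} but now inserting the conjugation required by the inner product $\scal{\phi_1,\phi_2}=\int_\R\overline{\phi_2(x)}\phi_1(x)\,dx$. The crucial observation is that the Hermite functions $h_{k+2}$ are real-valued scalars, so conjugating $\Phi_q$ only conjugates the coefficients $Q_k(q)$ and leaves $h_{k+2}(x)$ untouched; in particular $\overline{Q_k(q)}Q_k(q)=|Q_k(q)|^2$, and no commutativity issue arises.

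First I would write $\|\Phi_q\|^2_{L^2_\Hq(\R)}=\int_\R\overline{\Phi_q(x)}\Phi_q(x)\,dx$, substitute the two series, and interchange summation and integration; this is legitimate because $|Q_k(q)|\le|q|^k$ by the estimate already used for the Fock-Fueter kernel, so the resulting double series is absolutely convergent. Using the orthogonality relation $\int_\R h_{k+2}(x)h_{j+2}(x)\,dx=\|h_{k+2}\|^2\delta_{k,j}$ collapses the double sum to $\frac{1}{\sqrt\pi}\sum_{k\ge 0}\frac{|Q_k(q)|^2}{2^k(k!)^2}\|h_{k+2}\|^2$. Inserting the classical value $\|h_{k+2}\|^2=\sqrt\pi\,2^{k+2}(k+2)!$ --- equivalently, the identity $\frac{1}{\sqrt\pi}\frac{\|h_{k+2}\|^2}{2^k(k!)^2}=\frac{4(k+1)(k+2)}{k!}$ that is already implicit in the proof of Proposition \ref{kerpp} --- yields
$$\|\Phi_q\|^2_{L^2_\Hq(\R)}=4\sum_{k=0}^\infty\frac{(k+1)(k+2)}{k!}|Q_k(q)|^2,$$
which is precisely the square of the asserted formula. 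Finiteness is then immediate from $|Q_k(q)|\le|q|^k$, since $\sum_k\frac{(k+1)(k+2)}{k!}|q|^{2k}<\infty$; indeed this identifies $\|\Phi_q\|^2_{L^2_\Hq(\R)}=4\|G_q\|_{\mathcal{A}(\Hq)}^2$, a quantity already controlled in \eqref{twostar}.

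For the final inequality, recall from Theorem \ref{SH} that $\mathcal{S}_\Hq\varphi(q)=\int_\R\Phi(q,x)\varphi(x)\,dx$. I would bound this by the integral triangle inequality $\left|\int_\R\Phi(q,x)\varphi(x)\,dx\right|\le\int_\R|\Phi(q,x)|\,|\varphi(x)|\,dx$, using submultiplicativity of the quaternionic norm under the integral, followed by the ordinary Cauchy--Schwarz inequality applied to the nonnegative real functions $x\mapsto|\Phi(q,x)|$ and $x\mapsto|\varphi(x)|$; this gives exactly $\|\Phi_q\|_{L^2_\Hq(\R)}\|\varphi\|_{L^2_\Hq(\R)}$, finite by the first part. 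The only genuinely delicate point is the bookkeeping around the conjugation and the noncommutativity of the coefficients $Q_k(q)$ in the norm computation; but once one notes that the $h_{k+2}$ are real scalars, the calculation is term-by-term identical to Proposition \ref{kerpp} with $Q_k(p)$ replaced by $\overline{Q_k(q)}$, so I expect this step to present no real obstacle.
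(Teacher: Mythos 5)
Your argument is correct and is essentially the paper's: both reduce the norm computation to the orthogonality of the Hermite functions and the identity $\frac{1}{\sqrt{\pi}}\frac{\Vert h_{k+2}\Vert^2}{2^k(k!)^2}=\frac{4(k+1)(k+2)}{k!}$, and both close the second part with Cauchy--Schwarz. The only cosmetic difference is that the paper avoids redoing the expansion by observing $\overline{\Phi(q,x)}=\Phi(\bar q,x)$ and quoting Proposition \ref{kerpp} directly with $p=\bar q$, whereas you recompute the same double sum with the conjugation inserted; your handling of the conjugation and noncommutativity (the real scalars $h_{k+2}(x)$ commute, and the off-diagonal terms die under integration) is sound.
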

 \begin{proof}
 Let $q\in\Hq,$ then we have
   \[ \begin{split}
 \displaystyle  \|\Phi_q\|_{ L^2_\Hq(\R)}^2& = \int_\R\Phi(q,x)\overline{\Phi(q,x)}dx \\
&=\int_\R\Phi(q,x)\Phi(\bar{q},x)dx.\\
\end{split}
\] Thus, by Proposition \ref{kerpp} we get \[ \begin{split}
 \displaystyle  \|\Phi_q\|_{ L^2_\Hq(\R)}^2& = 4\sum_{k=0}^{\infty}T_k(q)T_k(\bar{q})  \\
&=4 \sum_{k=0}^\infty \frac{(k+1)(k+2)}{k!}|Q_k(q)|^2 .\\
\end{split}
\]
However, since $|Q_k(q)|^2 \leq |q|^{2k}$ for all $k\geq 0$, using \eqref{twostar} and the Cauchy-Schwarz inequality  we conclude the proof.
\end{proof}

The action of the Bargmann-Fock-Fueter transform on the normalized Hermite functions is given by
\begin{prop} \label{action}
For all $n\geq 0,$ set 
\begin{equation}\label{diesis}
\xi_n(x)=\displaystyle \frac{h_n(x)}{\|h_n\|_{ L^2_\Hq(\R)}}.
\end{equation} 
Then, we have $$\mathcal{S}_\Hq\xi_n=\breve{f}_{\xi_n}=0; \text{ for } n=0,1$$
and $$\displaystyle \mathcal{S}_\Hq\xi_n(q)=\breve{f}_{\xi_n}(q)=-2T_{n-2}(q); \text{ for all } n\geq 2.$$
 \end{prop}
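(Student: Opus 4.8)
The plan is to substitute the series expansion of the kernel $\Phi(q,x)$ established in Theorem \ref{SH} directly into the integral defining $\mathcal{S}_\Hq\xi_n$, and then to exploit the orthogonality of the Hermite functions to collapse the resulting sum to a single term. Concretely, I would write
$$\mathcal{S}_\Hq\xi_n(q) = \int_\R \Phi(q,x)\,\xi_n(x)\,dx = -\frac{1}{\pi^{\frac14}\|h_n\|_{L^2_\Hq(\R)}}\sum_{k=0}^\infty \frac{Q_k(q)}{2^{\frac{k}{2}}k!}\int_\R h_{k+2}(x)h_n(x)\,dx,$$
after interchanging the (absolutely convergent) series with the integral. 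Since $\int_\R h_{k+2}(x)h_n(x)\,dx = \|h_n\|_{L^2_\Hq(\R)}^2\,\delta_{k+2,n}$, only the index $k=n-2$ can contribute. For $n=0,1$ the equation $k+2=n$ has no solution with $k\geq 0$, so the sum is empty and $\mathcal{S}_\Hq\xi_n=0$, proving the first assertion.

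For $n\geq 2$ the sole surviving term gives
$$\mathcal{S}_\Hq\xi_n(q) = -\frac{\|h_n\|_{L^2_\Hq(\R)}}{\pi^{\frac14}}\frac{Q_{n-2}(q)}{2^{\frac{n-2}{2}}(n-2)!}.$$
At this stage I would pin down the norm of the Hermite functions. Comparing the two expressions for the Segal-Bargmann kernel, namely $A(q,x)=\pi^{-\frac14}\sum_{n}\frac{h_n(x)}{2^{n/2}n!}q^n$ (obtained from the Hermite generating function) and $A(q,x)=\sum_n \frac{q^n}{\|q^n\|}\frac{h_n(x)}{\|h_n\|}$ (Proposition 4.1 of \cite{DG1.2017}), together with $\|q^n\|^2=n!$ from \eqref{spmonomials}, yields $\|h_n\|_{L^2_\Hq(\R)}=\pi^{\frac14}2^{\frac{n}{2}}\sqrt{n!}$.

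Substituting this value and simplifying the powers of $2$ (the ratio $2^{n/2}/2^{(n-2)/2}=2$) leaves
$$\mathcal{S}_\Hq\xi_n(q) = -2\frac{\sqrt{n!}}{(n-2)!}Q_{n-2}(q).$$
The concluding step is to recognize, using $n!=n(n-1)(n-2)!$, that $\frac{\sqrt{n!}}{(n-2)!}=\sqrt{\frac{n(n-1)}{(n-2)!}}$, which is exactly the normalizing factor in the definition $T_{n-2}(q)=\sqrt{\frac{(n-1)n}{(n-2)!}}Q_{n-2}(q)$; hence $\mathcal{S}_\Hq\xi_n(q)=-2T_{n-2}(q)$.

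The argument is essentially bookkeeping, and I do not expect a genuine obstacle. The two points requiring care are the correct determination of $\|h_n\|_{L^2_\Hq(\R)}$, which must be read off from the generating-function expansion of $A(q,x)$ rather than assumed, and the justification of the term-by-term integration, which follows from the Gaussian-controlled uniform convergence of the defining series for $\Phi$ established earlier.
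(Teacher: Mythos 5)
Your proof is correct, but it follows a different route from the paper's. The paper's proof is a two-line composition argument: it writes $\mathcal{S}_\Hq=\tau\circ\mathcal{B}_\Hq$, invokes Lemma 4.4 of \cite{DG1.2017} to get $\mathcal{B}_\Hq(\xi_n)(q)=q^n/\sqrt{n!}$, and then applies the formula $\tau(q^n)=-2(n-1)nQ_{n-2}(q)$ (with $\tau(1)=\tau(q)=0$) from Remark 3.8, so the normalizing factor $\sqrt{(n-1)n/(n-2)!}$ of $T_{n-2}$ appears immediately without ever touching the Hermite norms. You instead work with the integral kernel $\Phi(q,x)$ from Theorem \ref{SH} and collapse the series by Hermite orthogonality; this is perfectly valid and your bookkeeping checks out (the value $\|h_n\|_{L^2_\Hq(\R)}=\pi^{1/4}2^{n/2}\sqrt{n!}$ you extract is consistent with the coefficients $\frac{1}{\pi^{1/4}k!2^{k/2}}$ used in the paper's proof of Theorem \ref{SH} and with the norm $\|h_{k+2}\|^2$ appearing in Proposition \ref{kerpp}, and the final simplification $\sqrt{n!}/(n-2)!=\sqrt{n(n-1)/(n-2)!}$ is right). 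The trade-off: the paper's route is shorter and avoids both the explicit Hermite norm and the interchange of series and integral, while yours has the virtue of verifying the kernel representation of Theorem \ref{SH} against the expected answer, at the cost of the extra justification of term-by-term integration (which, as you note, follows from the $L^2$-convergence of the partial sums of $\Phi(q,\cdot)$ for fixed $q$).
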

  \begin{proof}
  To prove this fact we only need to use the definition of $\mathcal{S}_\Hq$ as a composition of the Fueter mapping $\tau$ and the quaternionic Segal-Bargmann transform $\mathcal{B}_\Hq$. Then, by Lemma 4.4 in \cite{DG1.2017} we know that $$\mathcal{B}_{\Hq}(\xi_n)(q)=\frac{q^n}{\sqrt{n!}}; \forall n\geq 2.$$
  Finally, we apply Remark 3.8 to conclude the proof.
\end{proof}
Then, we have
 \begin{prop} The Bargmann-Fock-Fueter transform $$\mathcal{S}_\Hq:L^2_\Hq(\R) \longrightarrow \mathcal{A}(\Hq)$$ is a quaternionic right linear bounded surjective operator such that for any $\varphi\in L^2_\Hq(\R)$, we have $$\|\mathcal{S}_\Hq \varphi\|_{\mathcal{A}(\Hq)} \leq 2 \|\varphi\|_{ L^2_\Hq(\R)}.$$
\end{prop}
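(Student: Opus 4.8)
The plan is to exploit the factorization $\mathcal{S}_\Hq=\tau\circ Id\circ\mathcal{B}_\Hq$ established in Theorem \ref{SH}, together with the fact, recalled in Section 4, that $\mathcal{B}_\Hq$ is an isometry of $L^2_\Hq(\R)$ onto $\mathcal{F}_{Slice}(\Hq)$. Three things have to be checked: right $\Hq$-linearity, the norm estimate with constant $2$, and surjectivity. Since $\mathcal{B}_\Hq$ is already known to be an isometric isomorphism, all three statements reduce to corresponding properties of the Fueter mapping $\tau$ viewed as an operator $\tau:\mathcal{F}_{Slice}(\Hq)\to\mathcal{A}(\Hq)$, composed afterwards with $\mathcal{B}_\Hq$.

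First I would dispose of right-linearity: the quaternionic Segal-Bargmann transform $\mathcal{B}_\Hq$ is right linear, being an integral operator whose kernel multiplies $\varphi$ on the left, while $\tau=\Delta$ is a real scalar differential operator and hence commutes with right multiplication by quaternionic constants, $\Delta(f\lambda)=(\Delta f)\lambda$. Thus the composition $\mathcal{S}_\Hq$ is right $\Hq$-linear. Surjectivity is then essentially a matter of unwinding definitions: by construction $\mathcal{A}(\Hq)=\{\tau(f):f\in\mathcal{F}_{Slice}(\Hq)\}$, and since $\mathcal{B}_\Hq$ maps $L^2_\Hq(\R)$ onto $\mathcal{F}_{Slice}(\Hq)$, we get
$$\mathcal{S}_\Hq\big(L^2_\Hq(\R)\big)=\tau\big(\mathcal{B}_\Hq(L^2_\Hq(\R))\big)=\tau\big(\mathcal{F}_{Slice}(\Hq)\big)=\mathcal{A}(\Hq).$$

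The core of the argument is the norm estimate, for which I would first establish the intermediate bound $\|\tau(f)\|_{\mathcal{A}(\Hq)}\leq 2\|f\|_{\mathcal{F}_{Slice}(\Hq)}$. Take $f\in\mathcal{F}_{Slice}(\Hq)$ and expand it in the orthogonal basis of monomials as $f(q)=\sum_{k\geq 0}q^kc_k$, so that $\|f\|^2_{\mathcal{F}_{Slice}(\Hq)}=\sum_{k\geq 0}k!|c_k|^2$. Applying $\tau$ term by term and using $\tau(1)=\tau(q)=0$ together with $\tau(q^k)=-2(k-1)kQ_{k-2}(q)$ from Remark \ref{remQ}, a reindexing gives $\tau(f)=\sum_{j\geq 0}Q_j\alpha_j$ with $\alpha_j=-2(j+1)(j+2)c_{j+2}$. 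Feeding this into the norm of $\mathcal{A}(\Hq)$ and simplifying the factor $\tfrac{j!}{(j+1)(j+2)}\,|\alpha_j|^2$ yields
$$\|\tau(f)\|_{\mathcal{A}(\Hq)}^2=4\sum_{j\geq 0}(j+2)!\,|c_{j+2}|^2=4\sum_{k\geq 2}k!\,|c_k|^2\leq 4\,\|f\|^2_{\mathcal{F}_{Slice}(\Hq)},$$
which is exactly the inequality already encountered inside the proof of Theorem \ref{AHcar}. Composing with the isometry $\mathcal{B}_\Hq$ then gives, for every $\varphi\in L^2_\Hq(\R)$,
$$\|\mathcal{S}_\Hq\varphi\|_{\mathcal{A}(\Hq)}=\|\tau(\mathcal{B}_\Hq\varphi)\|_{\mathcal{A}(\Hq)}\leq 2\,\|\mathcal{B}_\Hq\varphi\|_{\mathcal{F}_{Slice}(\Hq)}=2\,\|\varphi\|_{L^2_\Hq(\R)},$$
establishing boundedness with the stated constant.

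I do not expect a genuine obstacle here; the content is essentially bookkeeping, and the only points needing care are the quaternionic ordering of coefficients (so that right-linearity is used consistently when pulling the $c_k$ to the right through $\tau$) and matching the numerical constants in the passage from $|\alpha_j|^2$ to $(j+2)!\,|c_{j+2}|^2$. As an alternative, entirely equivalent route one could argue spectrally: the normalized Hermite functions $\{\xi_n\}$ form an orthonormal basis of $L^2_\Hq(\R)$, and a short check from the definition of the inner product shows that the $\{T_k\}$ form an orthonormal basis of $\mathcal{A}(\Hq)$, so Proposition \ref{action}, which gives $\mathcal{S}_\Hq\xi_n=0$ for $n=0,1$ and $\mathcal{S}_\Hq\xi_n=-2T_{n-2}$ for $n\geq 2$, exhibits $\mathcal{S}_\Hq$, up to the factor $-2$, as an index shift by two between these bases that annihilates $\xi_0,\xi_1$; surjectivity, the constant $2$, and right-linearity can then all be read off directly from this action. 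The only slightly delicate step in this second approach is justifying the term-by-term passage $\mathcal{S}_\Hq\varphi=\sum_n(\mathcal{S}_\Hq\xi_n)c_n$, which is legitimate because the evaluation functionals on the reproducing kernel Hilbert space $\mathcal{A}(\Hq)$ are continuous, so norm convergence of the expansion is compatible with its pointwise values.
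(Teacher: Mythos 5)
Your proposal is correct and follows essentially the same route as the paper: the paper expands $\varphi$ in the Hermite basis $(\xi_k)$, invokes Proposition \ref{action} together with the isometry of $\mathcal{B}_\Hq$ to write $\mathcal{S}_\Hq\varphi=\sum_k Q_k\beta_k$ with $\beta_k=-2\sqrt{(k+1)(k+2)/k!}\,\alpha_{k+2}$, and reads off $\|\mathcal{S}_\Hq\varphi\|^2_{\mathcal{A}(\Hq)}=4\sum_{k\geq 2}|\alpha_k|^2\leq 4\|\varphi\|^2$, with surjectivity by construction — which is exactly your "spectral" variant, and your primary route (bounding $\tau$ on $\mathcal{F}_{Slice}(\Hq)$ via the monomial expansion and composing with the isometry) is the same computation transported through $\mathcal{B}_\Hq$.
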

 \begin{proof}
 Let $\varphi\in L^2(\R)$. Since $(\xi_k)_k$ as in \eqref{diesis} form an orthonormal basis of $L^2(\R)$ we then have $$\varphi=\displaystyle \sum_{k=0}^\infty \xi_k\alpha_k \text{ with } (\alpha_k)_k\subset\Hq \text{ and such that } \Vert \varphi \Vert^2=  \sum_{k=0}^\infty |\alpha_k|^2<\infty.$$
 Hence, since $\mathcal{B}_\Hq$ is an isometric isomorphism we use Proposition \ref{action} to get  $$\mathcal{S}_\Hq \varphi(q)=\displaystyle \sum_{k=0}^\infty  Q_k(q)\beta_k \text{ where } \beta_k=-2\sqrt{\frac{(k+1)(k+2)}{k!}}\alpha_{k+2}.$$
 In particular, this implies that
 \[ \begin{split}
 \displaystyle  \|\mathcal{S}_\Hq \varphi\|_{\mathcal{A}(\Hq)} ^2& = \sum_{k=0}^\infty \frac{k!}{(k+1)(k+2)}|\beta_k|^2   \\
&= 4\sum_{k=2}^\infty |\alpha_k|^2\leq 4 \Vert \varphi \Vert^2.\\
\end{split}
\]
Finally,  $\mathcal{S}_\Hq:L^2_\Hq(\R) \longrightarrow \mathcal{A}(\Hq)$ is surjective by construction. This completes the proof.
\end{proof}
For any $k\geq 0$, we consider the subspaces of $L^2_\Hq(\R)$ defined by $$\mathcal{H}_k:=\xi_k\Hq=\{\xi_k\alpha; \alpha\in\Hq \},$$ where $\xi_k$ denote the normalized Hermite functions. It is clear that we have the orthogonal decomposition  $$\displaystyle L^2_\Hq(\R)=\oplus^{\infty}_{k=0}\mathcal{H}_k.$$ Then, we consider $\mathcal{H}=\oplus^{\infty}_{k=2}\mathcal{H}_k$ as a subspace of $L^2_\Hq(\R)$, endowed with the induced norm and prove
\begin{prop}
Let $\varphi,\psi\in \mathcal{H}$, then we have $$\scal{\mathcal{S}_{\Hq}\varphi,\mathcal{S}_{\Hq}\psi}_{\mathcal{A}(\Hq)}=4\scal{\varphi,\psi}_{\mathcal{H}}.$$ In particular, $$\|\mathcal{S}_\Hq \varphi\|_{\mathcal{A}(\Hq)}=2\|\varphi\|_{\mathcal{H}}.$$

\end{prop}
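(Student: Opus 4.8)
The plan is to reduce everything to the orthonormal basis $\{\xi_k\}_{k\geq 2}$ of $\mathcal{H}$ and to track how the normalizing constants in the definition of $T_k$ interact with the weights appearing in the inner product of $\mathcal{A}(\Hq)$. First I would expand $\varphi=\sum_{k\geq 2}\xi_k\alpha_k$ and $\psi=\sum_{k\geq 2}\xi_k\beta_k$ with $(\alpha_k),(\beta_k)\subset\Hq$, so that by definition $\scal{\varphi,\psi}_{\mathcal{H}}=\sum_{k\geq 2}\overline{\beta_k}\alpha_k$; here I use that the $\xi_k$ are real-valued and orthonormal, which makes the cross terms collapse to Kronecker deltas and keeps the quaternionic scalars on the correct side.

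Next, using the right $\Hq$-linearity and boundedness of $\mathcal{S}_\Hq$ established above, together with Proposition \ref{action}, I would write $\mathcal{S}_\Hq\varphi=-2\sum_{k\geq 2}T_{k-2}\alpha_k$, and after the reindexing $j=k-2$ and recalling $T_j=\sqrt{(j+1)(j+2)/j!}\,Q_j$, this becomes $\mathcal{S}_\Hq\varphi=\sum_{j\geq 0}Q_j\gamma_j$ with $\gamma_j=-2\sqrt{(j+1)(j+2)/j!}\,\alpha_{j+2}$; similarly $\mathcal{S}_\Hq\psi=\sum_{j\geq 0}Q_j\delta_j$ with $\delta_j=-2\sqrt{(j+1)(j+2)/j!}\,\beta_{j+2}$.

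Then the computation is mechanical: plugging these coefficients into the inner product formula for $\mathcal{A}(\Hq)$ and using that the factor $\sqrt{(j+1)(j+2)/j!}$ is a positive real scalar, hence commutes with conjugation, gives
$$\scal{\mathcal{S}_\Hq\varphi,\mathcal{S}_\Hq\psi}_{\mathcal{A}(\Hq)}=\sum_{j\geq 0}\frac{j!}{(j+1)(j+2)}\,\overline{\delta_j}\gamma_j=4\sum_{j\geq 0}\overline{\beta_{j+2}}\alpha_{j+2},$$
the weight $j!/((j+1)(j+2))$ cancelling exactly against $(j+1)(j+2)/j!$ and leaving the constant $4$. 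Reindexing back to $k=j+2$ identifies the right-hand side with $4\scal{\varphi,\psi}_{\mathcal{H}}$, and taking $\psi=\varphi$ yields $\norm{\mathcal{S}_\Hq\varphi}_{\mathcal{A}(\Hq)}^2=4\norm{\varphi}_{\mathcal{H}}^2$, hence the stated norm identity.

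I expect the only genuine subtlety to be justifying the term-by-term passage, that is, interchanging $\mathcal{S}_\Hq$ with the infinite sum defining $\varphi$ and evaluating the $\mathcal{A}(\Hq)$-inner product factor by factor. This is legitimate because $\mathcal{S}_\Hq$ is a bounded operator and the $\{Q_k\}$ are pairwise orthogonal in $\mathcal{A}(\Hq)$, so the partial sums converge and the mixed terms vanish. The remainder is merely the bookkeeping of the two index shifts.
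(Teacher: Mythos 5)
Your proposal is correct and follows essentially the same route as the paper's proof: expand $\varphi,\psi$ in the orthonormal basis $\{\xi_k\}_{k\geq 2}$, use Proposition \ref{action} to transport the coefficients (with the factor $-2\sqrt{(k+1)(k+2)/k!}$ and the index shift $k\mapsto k-2$), and observe that the weight $k!/((k+1)(k+2))$ in the $\mathcal{A}(\Hq)$ inner product cancels exactly, leaving the constant $4$. The only difference is that you make explicit the term-by-term interchange and the reality of the Hermite functions, which the paper leaves implicit.
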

\begin{proof}
Let $\varphi=\displaystyle\sum_{k=2}^{\infty}\xi_k\alpha_k$ and $\psi=\displaystyle\sum_{k=2}^{\infty}\xi_k\beta_k$ be two functions belonging to $\mathcal{H}$. Thus, by Proposition \ref{action} we get $$\mathcal{S}_\Hq\varphi=\displaystyle\sum_{k=0}^{\infty}Q_k\alpha_k' \text{ and }\mathcal{S}_\Hq\psi=\displaystyle\sum_{k=0}^{\infty}Q_k\beta_k',$$ where we have set $$\alpha_k'=-2\sqrt{\frac{(k+1)(k+2)}{k!}}\alpha_{k+2} \text{ and }\beta_k'=-2\sqrt{\frac{(k+1)(k+2)}{k!}}\beta_{k+2}.$$ Therefore, we obtain \[ \begin{split}
 \displaystyle \scal{\mathcal{S}_\Hq \varphi,\mathcal{S}_\Hq\psi}_{\mathcal{A}(\Hq)} & = 4\sum_{k=0}^\infty \frac{k!}{(k+1)(k+2)}\overline{\beta_k'}\alpha_k' \\
&=4\sum_{k=2}^\infty \overline{\beta_k}\alpha_k\\
&=4 \scal{\varphi,\psi}_{\mathcal{H}}.\\
\end{split}
\]
Thus, in particular, for $\psi=\varphi$ we obtain $$\|\mathcal{S}_\Hq \varphi\|_{\mathcal{A}(\Hq)}=2\|\varphi\|_{\mathcal{H}}.$$
\end{proof}
Finally, we finish this section by giving some integral representations of the quaternionic regular polynomials $(Q_k)_{k\geq 0}$ in terms of the Fock-Fueter kernel $K_\mathcal{F}(q,p)$ and the Segal-Bargmann-Fueter kernel $\Phi(q,x)$, respectively. Indeed,
\begin{prop} \label{IR1}
Let $I\in \Sq$ and $q\in\Hq$. Then, we have
\begin{itemize}
\item[i)]$Q_k(q)=\displaystyle -\frac{1}{2(k+1)(k+2)}\int_{\C_I}K_\mathcal{F}(q,p)p^{k+2}d\mu_I(p)$,  $\forall k\geq 0$.
\item[ii)] $Q_k(q)=\displaystyle -\frac{1}{4\pi^{\frac{1}{4}}2^{\frac{k}{2}}(k+1)(k+2)}\int_{\R}\Phi(q,x)h_{k+2}(x)dx$, $\forall k\geq 0$.
\end{itemize}
\end{prop}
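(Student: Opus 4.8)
Both identities are coefficient-extraction (``inversion'') formulas: each recovers the single polynomial $Q_k$ from a generating kernel by integrating it against the matching orthogonal system. So in each case the plan is to insert the known series expansion of the kernel, interchange summation and integration, and then collapse the series to a single term by an orthogonality relation. Part i) uses the expansion of the Fock-Fueter kernel together with the orthogonality of the monomials in $\mathcal{F}_{Slice}(\Hq)$, while part ii) uses the expansion of $\Phi$ together with the orthogonality of the Hermite functions.

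\textbf{Proof of i).} I would substitute the expansion $K_\mathcal{F}(q,p)=-2\sum_{j=0}^\infty \frac{Q_j(q)}{j!}\bar p^{\,j+2}$ from Proposition \ref{FKex} into the right-hand integral. For fixed $q$ the quaternion $Q_j(q)$ is independent of $p$, and the remaining factor $\bar p^{\,j+2}p^{k+2}$ lies in the commutative plane $\C_I$; hence $Q_j(q)$ may be pulled to the left of a scalar integral, namely
$$\frac{1}{\pi}\int_{\C_I}\bar p^{\,j+2}p^{k+2}e^{-|p|^2}\,d\lambda_I(p)=\scal{f_{k+2},f_{j+2}}_{\mathcal{F}_{Slice}(\Hq)}=(k+2)!\,\delta_{j,k},$$
where I have recognized this integral as the Fock-space inner product of the monomials $f_{k+2}(p)=p^{k+2}$ and $f_{j+2}(p)=p^{j+2}$ and applied \eqref{spmonomials}. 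Only the term $j=k$ survives, producing $-2\frac{Q_k(q)}{k!}(k+2)!=-2(k+1)(k+2)Q_k(q)$. Dividing by $-2(k+1)(k+2)$ gives exactly i).

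\textbf{Proof of ii).} I would argue identically, now using the expansion $\Phi(q,x)=-\pi^{-1/4}\sum_{j=0}^\infty \frac{Q_j(q)}{2^{j/2}j!}\,h_{j+2}(x)$ from Theorem \ref{SH}. Pairing with $h_{k+2}$ and invoking the orthogonality relation $\int_\R h_{j+2}(x)h_{k+2}(x)\,dx=\|h_{k+2}\|^2\delta_{j,k}$ again collapses the sum to its $j=k$ term. The only quantitative input is the norm $\|h_{k+2}\|^2=\sqrt{\pi}\,2^{k+2}(k+2)!$; this is the standard Hermite normalization, and in any case it is already implicit in the computation carried out inside Proposition \ref{kerpp}. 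Substituting it and simplifying the constants ($\sqrt{\pi}/\pi^{1/4}=\pi^{1/4}$, $2^{k+2}/2^{k/2}=4\cdot 2^{k/2}$, and $(k+2)!/k!=(k+1)(k+2)$) yields $-4\pi^{1/4}2^{k/2}(k+1)(k+2)Q_k(q)$, and solving for $Q_k(q)$ gives ii).

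\textbf{Main obstacle.} There is no deep difficulty: both are term-by-term orthogonal expansions. The two points that need care are (a) the interchange of summation and integration, which for i) is justified by the locally uniform convergence of the defining series together with Gaussian integrability of each term (dominated convergence), and for ii) simply by the continuity of the $L^2_\Hq(\R)$ inner product once one notes that $\Phi_q\in L^2_\Hq(\R)$ with square-summable coefficients; and (b) the non-commutativity of $\Hq$, which forces one to keep the coefficient $Q_j(q)$ on the \emph{left} throughout. The latter is harmless precisely because the surviving scalar integrals are real, so the ordering ultimately does not matter. Pinning down the Hermite normalization constant in ii) is the only genuinely quantitative step, and it is routine.
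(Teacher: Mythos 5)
Your proposal is correct and follows essentially the same route as the paper: for i) it expands $K_\mathcal{F}(q,p)$ via Proposition \ref{FKex} and collapses the series by the orthogonality relation \eqref{spmonomials} for the monomials in $\mathcal{F}_{Slice}(\Hq)$, and for ii) it does the analogous computation with the expansion of $\Phi$ from Theorem \ref{SH} and the orthogonality of the Hermite functions, which is exactly what the paper indicates (and you carry out the constant-chasing, including $\|h_{k+2}\|^2=\sqrt{\pi}\,2^{k+2}(k+2)!$, correctly and consistently with Proposition \ref{kerpp}). Your added remarks on the interchange of sum and integral and on keeping $Q_j(q)$ on the left are sound and only make explicit what the paper leaves implicit.
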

\begin{proof}
\begin{itemize}
\item[i)] When $k\geq 0$, Proposition \ref{FKex} yields
$$K_\mathcal{F}(q,p)=\displaystyle -2\sum_{k=0}^\infty\frac{Q_k(q)}{k!}\bar{p}^{k+2}, \qquad \forall (q,p)\in\Hq\times\Hq.$$

Therefore, \[ \begin{split}
 \displaystyle \int_{\C_I}K_\mathcal{F}(q,p)p^{k+2}d\mu_I(p) & = -2\sum_{j=0}^{\infty}\frac{Q_j(q)}{j!}\scal{p^{k+2},p^{j+2}}_{\mathcal{F}_{Slice}(\Hq)} \\
&=-2\frac{Q_k(q)}{k!}\|p^{k+2}\|^2_{\mathcal{F}_{Slice}(\Hq)} \\
&=-2(k+1)(k+2)Q_k(q).\\
\end{split}
\]
\item[ii)] This assertion follows reasoning in the same way we did for i) using Theorem \ref{SH} combined with the fact that Hermite functions form an orthogonal basis of $L^2_\Hq(\R).$
\end{itemize}
\end{proof}
As a consequence we have this special identity
\begin{cor}
For any $x\in\R, I\in\Sq$ and $k\geq 0$, we have $$\displaystyle \int_{\C_I}p^{k}|p|^4e^{-|p|^2+x\bar{p}}d\lambda_I(p)=\pi (k+1)(k+2)x^k,$$ where $I\in\mathbb S$ and $d\lambda_I$ is the Lebesgue measure on $\C_I$.
\end{cor}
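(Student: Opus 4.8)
The plan is to specialize the integral representation of Proposition \ref{IR1}(i) to a \emph{real} argument and to evaluate the Fock-Fueter kernel explicitly along the real axis. The first ingredient I would record is that the monogenic polynomials $Q_k$ reduce to ordinary powers on $\R$: writing $Q_k(q)=\sum_{j=0}^k T^k_j q^{k-j}\overline{q}^j$ as in \eqref{Qk2} and setting $q=x\in\R$, so that $\overline{q}=q=x$, every term becomes $T^k_j x^k$, whence $Q_k(x)=\big(\sum_{j=0}^k T^k_j\big)x^k=x^k$, since $\sum_{j=0}^k T^k_j=\frac{2}{(k+1)(k+2)}\sum_{j=0}^k(k+1-j)=1$. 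This is exactly the computation already used to obtain the bound $|Q_k(q)|\le|q|^k$.

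Next I would compute the kernel $K_\mathcal{F}(x,p)$ for $x\in\R$ and $p\in\C_I$. Starting from Proposition \ref{FKex} and using $Q_k(x)=x^k$ together with the fact that the real scalar $x$ is central, I obtain
\[
K_\mathcal{F}(x,p)=-2\sum_{k=0}^\infty\frac{Q_k(x)}{k!}\overline{p}^{\,k+2}=-2\overline{p}^{\,2}\sum_{k=0}^\infty\frac{(x\overline{p})^k}{k!}=-2\overline{p}^{\,2}e^{x\overline{p}},
\]
the real-slot analogue of the closed form recorded in the Remark following Proposition \ref{FKex}.

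The computation is then finished by inserting these two facts into Proposition \ref{IR1}(i) evaluated at $q=x$:
\[
x^k=Q_k(x)=-\frac{1}{2(k+1)(k+2)}\int_{\C_I}K_\mathcal{F}(x,p)\,p^{k+2}\,d\mu_I(p)=\frac{1}{(k+1)(k+2)}\int_{\C_I}\overline{p}^{\,2}p^{k+2}e^{x\overline{p}}\,d\mu_I(p).
\]
To clean up the integrand I would use that $\overline{p}^{\,2}p^{\,2}=(\overline{p}p)^2=|p|^4$ is a real scalar, hence central, so that $\overline{p}^{\,2}p^{k+2}=|p|^4 p^k=p^k|p|^4$; recalling $d\mu_I(p)=\frac{1}{\pi}e^{-|p|^2}d\lambda_I(p)$ and rearranging then yields
\[
\int_{\C_I}p^k|p|^4 e^{-|p|^2+x\overline{p}}\,d\lambda_I(p)=\pi(k+1)(k+2)x^k,
\]
which is precisely the asserted identity.

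I expect no genuine obstacle here: since $x\in\R$ and $p\in\C_I$, the entire integrand lies in the commutative plane $\C_I$, so factoring $x$ out of the exponential series, pulling $e^{x\overline{p}}$ past $p^{k+2}$, and collapsing $\overline{p}^{\,2}p^{k+2}$ to $p^k|p|^4$ are all legitimate manipulations, being elementary consequences of $p\overline{p}=\overline{p}p=|p|^2\in\R$. The real content is simply the careful specialization to $q=x\in\R$ of Propositions \ref{FKex} and \ref{IR1} established above, and the evaluation $Q_k(x)=x^k$.
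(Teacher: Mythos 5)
Your proposal is correct and follows essentially the same route as the paper: specialize Proposition \ref{IR1}(i) to $q=x\in\R$, use $Q_k(x)=x^k$ to get the closed form $K_\mathcal{F}(x,p)=-2\overline{p}^{\,2}e^{x\overline{p}}$, and simplify the integrand inside the commutative plane $\C_I$. The paper's proof merely states these two ingredients without detail, so your write-up is just a fuller version of the same argument.
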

\begin{proof}
We only need to apply Proposition \ref{IR1} combined with the expression of the Fock-Fueter kernel for $x\in\mathbb R$, which is given by
  $$K_\mathcal{F}(x,p)=-2\overline{p}^2 e^{x\overline{p}},\qquad  \forall (x,p)\in \R\times\Hq.$$
\end{proof}

\section{The slice hyperholomorphic Bergman kernel for the quaternionic unit half ball and the fractional wedge}
In this section, we compute the explicit expression of the slice hyperholomorphic Bergman kernel on the quaternionic unit half ball and the fractional wedge domain.  The case of the quarter-ball could be treated also using smilar techniques. For the study of the Bergman kernel function in the setting of monogenic or Cauchy Fueter regular functions one may consult  for example \cite{CK2005,SMVN1997}. Let $\mathbb{B}^+$ denote the quaternionic half ball defined by the conditions $q\in\mathbb{B}$ and $Re(q)>0$. For a fixed $I\in\mathbb{S}$, let $\mathbb{B}_I^+:=\mathbb{B}^+\cap\mathbb{C}_I$ be the half disk of the complex plane $\mathbb{C}_I$. Then, the classical complex Bergman space on $\mathbb{B}_I^+$ is defined by $$\mathcal{A}(\mathbb{B}^+_I):=\{f\in Hol(\mathbb{B}^+_I), \frac{1}{\pi}\int_{\mathbb{B}^+_I}|f_I(z)|^2dA(z)<\infty \}$$ where $Hol(\mathbb{B}^+_I)$ denotes the space of holomorphic functions on the half disk $\mathbb{B}^+_I$, $z=x+Iy$ and $dA(z)=dxdy$. Note that the space $\mathcal{A}(\mathbb{B}^+_I)$ is a complex reproducing kernel Hilbert space. Furthermore, its reproducing kernel $K_{\mathbb{B}^+_I}$ is obtained as the sum of the Bergman kernels of both the complex unit disk and half plane. In particular, we have
\begin{equation}
K_{\mathbb{B}^+_I}(z,w):= \frac{1}{(1-z\overline{w})^2}+\frac{1}{(z+\overline{w})^2}; \text{ }\forall (z,w)\in\mathbb{B}^+_I\times\mathbb{B}^+_I
\end{equation}
where the first term corresponds to the Bergman kernel of the unit disk $K_{\mathbb{B}_I}$ while the second one is the Bergman kernel of the complex half plane $K_{\C^+_I}$, (see, e,g., p. 812 in \cite{CK2005}). Now, let us fix an imaginary unit $I\in\mathbb{S}$ and consider on the quaternionic half ball $\mathbb{B}^+$ the set defined by \begin{equation}\label{3stars}\mathcal{A}_{Slice}(\mathbb{B}^+):=\{f\in \mathcal{SR}(\mathbb{B}^+), \frac{1}{\pi}\int_{\mathbb{B}^+_I}|f_I(p)|^2d\sigma_I(p)<\infty \}
\end{equation} where for $p=x+Iy$ we have set $d\sigma_I(p)=dxdy$.
The set $\mathcal{A}_{Slice}(\mathbb{B}^+)$ is a right quaternionic vector space and may be endowed with the inner product:
\begin{equation}
\langle f, g\rangle_{\mathcal{A}_{Slice}(\mathbb{B}^+)}:=\frac{1}{\pi}\int_{\mathbb{B}^+_I}\overline{f_I(p)}g_I(p)d\sigma_I(p).
\end{equation} Moreover, since the quaternionic half-ball is a bounded axially symmetric slice domain it turns out that $\mathcal{A}_{Slice}(\mathbb{B}^+)$ is the slice hyperholomorphic Bergman space of the second kind on $\mathbb{B}^+$. These spaces were introduced and studied in a more general setting on axially symmetric slice domains in \cite{CGS2015}. In particular we have:

\begin{prop} The set $\mathcal{A}_{Slice}(\mathbb{B}^+)$ defined in \eqref{3stars} is a right quaternionic Hilbert space which does not depend on the choice of the imaginary unit $I\in\mathbb{S}$.
\end{prop}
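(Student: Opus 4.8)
The plan is to reduce everything to the \emph{representation formula}, which frees the analysis from the particular slice. First I would fix a reference imaginary unit $J\in\Sq$ and, for $f\in\mathcal{SR}(\mathbb{B}^+)$, set
\[
\alpha(x,y)=\tfrac12\big[f(x+yJ)+f(x-yJ)\big],\qquad
\beta(x,y)=\tfrac{J}{2}\big[f(x-yJ)-f(x+yJ)\big],
\]
on the planar half-disk $\Omega^+=\{(x,y):x^2+y^2<1,\ x>0\}$. By the representation formula these two $\Hq$-valued functions are intrinsic to $f$ (independent of $J$) and satisfy $f(x+Iy)=\alpha(x,y)+I\beta(x,y)$ for every $I\in\Sq$, together with the parity relations $\alpha(x,-y)=\alpha(x,y)$ and $\beta(x,-y)=-\beta(x,y)$.

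For the independence of $I$ I would compute the integrand slicewise. Since $I^2=-1$ and $\overline{I\beta}=-\overline{\beta}I$, a direct expansion gives
\[
|f_I(x+Iy)|^2=|\alpha(x,y)|^2+|\beta(x,y)|^2-2\,\mathrm{Re}\big(\alpha(x,y)\overline{\beta(x,y)}\,I\big).
\]
The crucial observation is that the cross term is the only $I$-dependent piece, and that the product $\alpha\overline{\beta}$ is \emph{odd} in $y$ because $\alpha$ is even and $\beta$ is odd; as $\Omega^+$ is symmetric under $y\mapsto-y$, its integral over $\Omega^+$ vanishes. Hence
\[
\|f\|_{\mathcal{A}_{Slice}(\mathbb{B}^+)}^2=\frac1\pi\int_{\mathbb{B}^+_I}|f_I(p)|^2\,d\sigma_I(p)=\frac1\pi\int_{\Omega^+}\big(|\alpha(x,y)|^2+|\beta(x,y)|^2\big)\,dx\,dy,
\]
an expression that no longer involves $I$. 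This simultaneously shows that the finiteness condition defining $\mathcal{A}_{Slice}(\mathbb{B}^+)$ and the value of the norm are independent of the chosen imaginary unit, that the form $\langle\cdot,\cdot\rangle_{\mathcal{A}_{Slice}(\mathbb{B}^+)}$ is positive definite (it vanishes only when $\alpha\equiv\beta\equiv0$, i.e.\ $f\equiv0$), and thus that we have a genuine right $\Hq$-inner product.

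It then remains to prove completeness. I would realize the restriction map $R_I:f\mapsto f_I$ as a right-linear isometry of $\mathcal{A}_{Slice}(\mathbb{B}^+)$ into the quaternionic Hilbert space $L^2(\mathbb{B}^+_I,d\sigma_I;\Hq)$, and show that its image is closed. Given a Cauchy sequence $\{f_n\}$, the restrictions $\{f_{n,I}\}$ converge in $L^2$ to some $g$; since each $f_{n,I}$ is holomorphic on $\mathbb{B}^+_I$ in the $\mathbb{C}_I$-sense, the sub-mean-value property of holomorphic functions upgrades $L^2$-convergence to local uniform convergence, so $g$ is again holomorphic on $\mathbb{B}^+_I$. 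Finally, because $\mathbb{B}^+$ meets the real axis in the interval $(0,1)$ it is an axially symmetric slice domain, and the extension/representation theorem produces a unique $f\in\mathcal{SR}(\mathbb{B}^+)$ with $f_I=g$; the parity computation above gives $f\in\mathcal{A}_{Slice}(\mathbb{B}^+)$ together with $f_n\to f$, which yields completeness.

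The step I expect to be the main obstacle is this last one: verifying that the $L^2$-limit $g$ on the half-disk is the trace of a globally slice hyperholomorphic function, i.e.\ controlling the passage from a single-slice holomorphic limit to a function defined on all of $\mathbb{B}^+$. This is precisely where the slice-domain hypothesis (and hence the applicability of the extension theorem underlying the representation formula) is essential; once it is in place, the parity-based identity of the second paragraph makes both the membership $f\in\mathcal{A}_{Slice}(\mathbb{B}^+)$ and the required norm control immediate.
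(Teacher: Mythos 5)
Your argument is correct, and it is worth noting that the paper itself offers no proof of this proposition at all: it simply observes beforehand that $\mathbb{B}^+$ is a bounded axially symmetric slice domain and defers to the general theory of slice Bergman spaces of the second kind in the cited reference. Your write-up therefore supplies the missing details, and it does so along the standard route that the cited framework uses: the representation formula with the parity relations $\alpha(x,-y)=\alpha(x,y)$, $\beta(x,-y)=-\beta(x,y)$, the $y$-symmetry of the planar half-disk killing the cross term $-2\,\mathrm{Re}\bigl(\alpha\overline{\beta}\,I\bigr)$, and the usual Bergman-space completeness scheme (sub-mean-value estimate via the Splitting Lemma, local uniform convergence, then the extension lemma back from the slice $\mathbb{C}_I$ to all of $\mathbb{B}^+$). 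All the hypotheses you invoke are legitimately available here, since $\mathbb{B}^+$ meets the real axis and is axially symmetric. The only point I would ask you to add is the $I$-independence of the full $\Hq$-valued inner product, not just of the norm: the proposition is about a quaternionic Hilbert space, and quaternionic polarization from the norm alone is slightly delicate. Fortunately the identical parity computation applied to $\overline{f_I}\,g_I=\overline{\alpha_f}\alpha_g+\overline{\beta_f}\beta_g+\overline{\alpha_f}I\beta_g-\overline{\beta_f}I\alpha_g$ shows that both $I$-dependent terms are odd in $y$ and hence integrate to zero over the half-disk, so the inner product equals $\frac{1}{\pi}\int_{\Omega^+}\bigl(\overline{\alpha_f}\alpha_g+\overline{\beta_f}\beta_g\bigr)\,dx\,dy$ for every $I$; with that one line added your proof is complete.
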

Note that in this framework the evaluation mapping $$\delta_q:f\longmapsto \delta_q(f)=f(q)$$ is a right quaternionic bounded linear form on $\mathcal{A}_{Slice}(\mathbb{B}^+)$ for any $q\in\mathbb{B}^+$. Moreover, the slice hyperholomorphic Bergman kernel of the second kind associated with $\mathbb{B}^+$ or slice Bergman kernel for short, is the function $$K_{\mathbb{B}^+}:\mathbb{B}^+\times\mathbb{B}^+\longrightarrow\mathbb{B}^+, \qquad (q,r)\longmapsto K_{\mathbb{B}^+}(q,r)$$ which is defined making use of the slice hyperholomorphic extension operator, i.e. \begin{align*}
K_{\mathbb{B}^+}(q,r) &:= K_{\mathbb{B}^+}^{r}(q)
\\&:= ext[K_{\mathbb{B}^+_J}^{r}(z)](q),\qquad {\rm for}\, r\in\mathbb B^+\cap\mathbb C_J, \, q\in\mathbb B^+.
\end{align*}

The next result relates the slice Bergman kernel on the quaternionic half ball to the slice Bergman kernels in the case of the quaternionic unit ball and of the half space.
\begin{thm} \label{KB+}
The slice hyperholomorphic Bergman space $\mathcal{A}_{Slice}(\mathbb{B}^+)$ is a right quaternionic reproducing kernel Hilbert space. Moreover, for all $(q,r)\in\mathbb{B}^+\times \mathbb{B}^+$ we have:
$$K_{\mathbb{B}^+}(q,r)=K_{\mathbb{B}}(q,r)+K_{\mathbb{H}^+}(q,r),$$
where $K_{\mathbb{B}}$ and $K_{\mathbb{H}^+}$ are, respectively, the slice Bergman kernels of the quaternionic unit ball and half space.
\end{thm}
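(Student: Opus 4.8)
The plan is to deduce both claims from the construction of the slice Bergman kernel of the second kind as a slice hyperholomorphic extension of the complex Bergman kernel of the half disk, together with the decomposition of that complex kernel recalled above.

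First I would settle the reproducing kernel property. As observed just before the statement, for every $q\in\mathbb{B}^+$ the evaluation $\delta_q\colon f\longmapsto f(q)$ is a bounded right quaternionic linear functional on $\mathcal{A}_{Slice}(\mathbb{B}^+)$, which is a right quaternionic Hilbert space independent of the chosen imaginary unit. The quaternionic Riesz representation theorem then yields, for each $q$, a unique element $K_{\mathbb{B}^+}^{q}\in\mathcal{A}_{Slice}(\mathbb{B}^+)$ such that $f(q)=\scal{f,K_{\mathbb{B}^+}^{q}}_{\mathcal{A}_{Slice}(\mathbb{B}^+)}$ for all $f$; setting $K_{\mathbb{B}^+}(q,r):=K_{\mathbb{B}^+}^{r}(q)$ exhibits $\mathcal{A}_{Slice}(\mathbb{B}^+)$ as a right quaternionic reproducing kernel Hilbert space. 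This is precisely the general statement for slice Bergman spaces of the second kind from \cite{CGS2015}, of which $\mathcal{A}_{Slice}(\mathbb{B}^+)$ is an instance.

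For the decomposition I would start from the complex identity valid on the half disk $\mathbb{B}^+_J$, namely
\[
K_{\mathbb{B}^+_J}(z,w)=K_{\mathbb{B}_J}(z,w)+K_{\C^+_J}(z,w)=\frac{1}{(1-z\overline{w})^2}+\frac{1}{(z+\overline{w})^2}.
\]
Fix $r\in\mathbb{B}^+\cap\mathbb{C}_J$ and read the three kernels as holomorphic functions of $z\in\mathbb{B}^+_J$. Applying the slice hyperholomorphic extension operator $ext$ in this variable, and using that $ext$ is right linear, hence additive, I obtain
\[
K_{\mathbb{B}^+}(q,r)=ext\bigl[K_{\mathbb{B}^+_J}^{r}\bigr](q)=ext\bigl[K_{\mathbb{B}_J}^{r}\bigr](q)+ext\bigl[K_{\C^+_J}^{r}\bigr](q).
\]
It then remains to recognise the two summands: since the slice Bergman kernels of the unit ball $\mathbb{B}$ and of the half space $\mathbb{H}^+$ are themselves defined, by the same procedure, as the extensions of the complex Bergman kernels of the disk $\mathbb{B}_J$ and of the half plane $\C^+_J$, one has $ext[K_{\mathbb{B}_J}^{r}](q)=K_{\mathbb{B}}(q,r)$ and $ext[K_{\C^+_J}^{r}](q)=K_{\mathbb{H}^+}(q,r)$, whence the asserted formula on $\mathbb{B}^+\times\mathbb{B}^+$.

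The step I expect to require the most care is this last identification, i.e. verifying that extending a single term of the half-disk kernel over $\mathbb{B}^+$ reproduces the restriction to $\mathbb{B}^+$ of the slice Bergman kernel of the larger domain. This rests on the inclusions $\mathbb{B}^+\subset\mathbb{B}$ and $\mathbb{B}^+\subset\mathbb{H}^+$, on the axial symmetry of $\mathbb{B}^+$, and on the fact that slice extension is determined by the holomorphic germ on a single slice, so that restricting the domain before or after extending yields the same slice function by uniqueness in the Representation formula. One should also confirm that both extended pieces lie in $\mathcal{A}_{Slice}(\mathbb{B}^+)$, so that their sum is indeed the reproducing kernel there. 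Granting these compatibility checks, the right linearity of $ext$ delivers the decomposition.
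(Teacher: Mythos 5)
Your proof is correct, but it takes a genuinely different route from the paper's. The paper fixes $r\in\mathbb{B}^+\cap\mathbb{C}_J$, forms the candidate $\psi_r:=K_{\mathbb{B}}(\cdot,r)+K_{\mathbb{H}^+}(\cdot,r)$, checks that it lies in $\mathcal{A}_{Slice}(\mathbb{B}^+)$ because $\mathbb{B}^+\subset\mathbb{B}\cap\mathbb{H}^+$, and then \emph{directly verifies the reproducing property}: it splits $f_J=F+GJ'$ via the Splitting Lemma, observes that the restriction of $\psi_r$ to $\mathbb{B}^+_J$ is the complex half-disk kernel $K_{\mathbb{B}^+_J}$, applies the classical complex reproducing property to $F$ and $G$, and concludes by uniqueness of the reproducing kernel. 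You instead work entirely at the level of the extension operator: you apply $ext$ to the complex identity $K_{\mathbb{B}^+_J}=K_{\mathbb{B}_J}+K_{\C^+_J}$, use its additivity, and identify each extended summand with the slice kernel of the larger domain via the identity principle on the axially symmetric domain $\mathbb{B}^+$. Both arguments ultimately rest on the same complex decomposition of the half-disk kernel, but they distribute the work differently: your route is shorter and avoids the Splitting Lemma computation, at the price of leaning on the general result from \cite{CGS2015} that the extension-defined kernel of the second kind \emph{is} the reproducing kernel (otherwise your two definitions of $K_{\mathbb{B}^+}$, one via Riesz representation and one via $ext$, are not yet reconciled); the paper's route re-establishes the reproducing property of the sum from scratch, so it needs only the complex Bergman theory of the half disk and the uniqueness of reproducing kernels. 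The compatibility checks you flag at the end (membership of each piece in $\mathcal{A}_{Slice}(\mathbb{B}^+)$, and that restricting before or after extending gives the same slice function) are exactly the right ones and are straightforward for the reasons you give, so no gap remains.
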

\begin{proof}
The first assertion follows from the general theory.\\
 Then, let us fix $r\in\mathbb{B}^+$ such that $r$ belongs to the slice $\mathbb{C}_J$ with $J\in\mathbb{S}$. Then, we consider the function $\psi_r$ defined by $$\psi_r(q):=K_{\mathbb{B}}(q,r)+K_{\mathbb{H}^+}(q,r),\qquad \forall q\in\mathbb{B}^+.$$ Clearly $\psi_r$ belongs to $\mathcal{A}_{Slice}(\mathbb{B}^+)$ since $\mathbb{B}^+$ is contained in both $\mathbb{B}$ and $\mathbb{H}^+$ and since by definition $K_{\mathbb{B}}$ and $K_{\mathbb{H}^+}$ are the slice Bergman kernels of the quaternionic unit ball and half space. Then, we only need to prove the reproducing kernel property. Indeed, let $f\in\mathcal{A}_{Slice}(\mathbb{B}^+)$. In particular, by the Splitting Lemma we can write $f_J(z)=F(z)+G(z)J'$ for any $z\in\mathbb{B}^+_J$ with $J'\in\mathbb{S}$ is orthogonal to $J$ and $F,G:\mathbb{B}_J^+\longrightarrow\mathbb{C}_J$ belong to the complex Bergman space $\mathcal{A}(\mathbb{B}^+_J)$. Therefore, we have
\[
\begin{split}
\langle \psi_r , f\rangle_{\mathcal{A}_{Slice}(\mathbb{B}^+)}&= \int_{\mathbb{B}^+_J}\overline{\psi_r(z)}f_J(z)d\sigma_J(z)\\
&=
\left(\int_{\mathbb{B}^+_J}\overline{\psi_r(z)}F(z)d\sigma_J(z)\right)+\left(\int_{\mathbb{B}^+_J}\overline{\psi_r(z)}G(z)d\sigma_J(z) \right)  J'\\
&= \left(\int_{\mathbb{B}^+_J}\overline{K_{\mathbb{B}_J^+}(z,r)}F(z)d\sigma_J(z)\right)+\left(\int_{\mathbb{B}^+_J}\overline{K_{\mathbb{B}_J^+}(z,r)}G(z)d\sigma_J(z) \right)  J'.
\end{split}
\]

Thus, by applying the results from the classical complex setting we get
\[
\begin{split}
\langle \psi_r , f\rangle_{\mathcal{A}_{Slice}(\mathbb{B}^+)}&= F(r)+G(r)J'\\
&=
f(r).
\end{split}
\]

So, it follows that the function $\psi_r$ belongs and reproduces any element of the space $\mathcal{A}_{Slice}(\mathbb{B}^+)$ for any $r\in\mathbb{B}^+$. Hence, by the uniqueness of the reproducing kernel we get $$K_{\mathbb{B}^+}(q,r)=K_{\mathbb{B}}(q,r)+K_{\mathbb{H}^+}(q,r),\qquad \text{ } \forall (q,r)\in\mathbb{B}^+\times \mathbb{B}^+.$$ This completes the proof.
\end{proof}
The explicit expression of the slice Bergman kernel of the quaternionic half-ball is given by the following
\begin{thm} For all $(q,r)\in\mathbb{B}^+\times \mathbb{B}^+$, we have:
$$K_{\mathbb{B}^+}(q,r)=\displaystyle (1+q^2)\left[(1-q\overline{r})*(q+\overline{r})\right]^{-*2}(1+\overline{r}^2),$$

where the $*$-product is taken with respect to the variable $q$.
\end{thm}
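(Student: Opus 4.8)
The plan is to reduce the claimed formula to an elementary computation on a single slice, using the additive decomposition of Theorem~\ref{KB+} together with the fact that a $*$-product collapses to the ordinary product once all the kernels involved take values in one complex plane $\mathbb{C}_J$.

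First I would invoke Theorem~\ref{KB+} to write $K_{\mathbb{B}^+}(q,r)=K_{\mathbb{B}}(q,r)+K_{\mathbb{H}^+}(q,r)$, and recall that the slice Bergman kernels of the quaternionic unit ball and of the half space are the slice hyperholomorphic extensions in $q$ of the complex Bergman kernels of the unit disk and of the half plane. By uniqueness of the slice extension this forces
\[
K_{\mathbb{B}}(q,r)=(1-q\overline{r})^{-*2},\qquad K_{\mathbb{H}^+}(q,r)=(q+\overline{r})^{-*2},
\]
the $*$-product and $*$-powers being taken in $q$; indeed both sides are slice hyperholomorphic in $q$ and agree, on a slice $\mathbb{C}_J$ containing $r$, with the complex terms $(1-z\overline{w})^{-2}$ and $(z+\overline{w})^{-2}$ of the kernel $K_{\mathbb{B}^+_I}$.

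Next I would fix $r\in\mathbb{C}_J$ and restrict $q$ to the same slice $\mathbb{C}_J$. Since then $q,\overline{r}\in\mathbb{C}_J$ and $\mathbb{C}_J$ is a commutative field isomorphic to $\mathbb{C}$, every $*$-operation degenerates to the ordinary pointwise product (recall also that the intrinsic factor $1+q^2$ satisfies $(1+q^2)*g=(1+q^2)g$), and the identity to be proved reduces to the scalar computation
\[
\frac{1}{(1-q\overline r)^{2}}+\frac{1}{(q+\overline r)^{2}}
=\frac{(q+\overline r)^{2}+(1-q\overline r)^{2}}{\big[(1-q\overline r)(q+\overline r)\big]^{2}}
=\frac{(1+q^{2})(1+\overline r^{2})}{\big[(1-q\overline r)(q+\overline r)\big]^{2}},
\]
whose last equality is the algebraic identity $(q+\overline r)^{2}+(1-q\overline r)^{2}=(1+q^{2})(1+\overline r^{2})$, checked by expanding. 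This is exactly the restriction to $\mathbb{B}^+_J$ of the right-hand side in the statement.

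Finally I would conclude by the Identity Principle. Both $K_{\mathbb{B}^+}(\cdot,r)$ and $(1+q^2)\big[(1-q\overline{r})*(q+\overline{r})\big]^{-*2}(1+\overline{r}^2)$ are slice hyperholomorphic in $q$ on $\mathbb{B}^+$, being formed from sums, $*$-products and $*$-inverses of slice hyperholomorphic functions together with right multiplication by the constant $1+\overline{r}^2$; as they coincide on the half disk $\mathbb{B}^+_J$, which has accumulation points in $\mathbb{C}_J$, they coincide on all of $\mathbb{B}^+$. The hard part is the second step: one must rigorously identify $K_{\mathbb{B}}$ and $K_{\mathbb{H}^+}$ with the $*$-kernels above, and make sure that on the chosen slice every factor is genuinely $\mathbb{C}_J$-valued so that the whole $*$-calculus really does reduce to the commutative one. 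Once this is secured, only the elementary scalar identity and the extension principle remain.
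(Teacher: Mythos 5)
Your proof is correct and takes essentially the same route as the paper: both reduce to the slice $\mathbb{C}_J$ containing $r$, verify there the elementary identity $(q+\overline{r})^{2}+(1-q\overline{r})^{2}=(1+q^{2})(1+\overline{r}^{2})$ behind the single-fraction form of $K_{\mathbb{B}^+_J}$, and then pass to all of $\mathbb{B}^+$ by slice hyperholomorphic extension and the Identity Principle. The only cosmetic difference is that you enter through the quaternionic decomposition $K_{\mathbb{B}^+}=K_{\mathbb{B}}+K_{\mathbb{H}^+}$ of Theorem~\ref{KB+} and identify each summand as a $*$-power, whereas the paper extends the complex kernel $K_{\mathbb{B}^+_J}$ directly via an auxiliary function $\Psi^r$.
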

\begin{proof}
Let $(q, r)\in \mathbb{B}^+\times\mathbb{B}^+$ and assume that $r$ belongs to a slice $\mathbb{C}_J$ . First, observe that $$K_{\mathbb{B}^+_J}(z,r)=\displaystyle \frac{(1+z^2)(1+\overline{r}^2)}{((1-z\overline{r})(z+\overline{r}))^2}\text{; } \forall z\in \mathbb{B}_J^+.$$

Let $\Phi^r:\mathbb{B}^+\longrightarrow\mathbb{H}$ be the function defined by
$$\Phi^r(q):=ext\left[\frac{1}{(1-z\overline{r})^2 (z+\overline{r})^2}\right](q)\text{; } \forall q\in \mathbb{B}^+.$$
Then, we consider the function $$\Psi^r(q)=\displaystyle (1+q^2)\Phi^r(q)(1+\overline{r}^2)\text{; } \forall q\in \mathbb{B}^+.$$

Note that,  $\Psi^r$ is slice regular on $\mathbb{B}^+$ as a multiplication of the intrinsic slice regular function $q\longmapsto 1+q^2$ with $q\longmapsto \displaystyle \Phi^r(q)(1+\overline{r}^2)$  which is also slice regular on the quaternionic half ball by construction. Moreover, for any $z\in\mathbb{B}_J^+$ we have $$\Psi^r(z)=\displaystyle \frac{(1+z^2)(1+\overline{r}^2)}{((1-z\overline{r})(z+\overline{r}))^2}=K_{\mathbb{B}^+_J}(z,r).$$
Therefore, by the Identity Principle for slice regular functions we get $$\Psi^r(q)=K_{\mathbb{B}^+}(q,r)\text{; } \forall (q,r)\in \mathbb{B}^+\times \mathbb{B}^+.$$

Finally, we use the definition of the $*$ product to see that, for all $(q, r) \in \mathbb{B}^+\times \mathbb{B}^+$ we have
$$\Phi^r(q)=(q+\overline{r})^{-*2}*(1-q\overline{r})^{-*2}=
\left[(1-q\overline{r})*(q+\overline{r})\right]^{-*2}.
$$

\end{proof}
For $I\in\Sq$, let us now consider the wedge domain defined by $$\mathcal{W}_{\C_I}^n:=\lbrace{z\in\C_I, \text{ } Re(z)>0 \text{ and } Re(\alpha^{\frac{1}{2}}z\alpha^{\frac{1}{2}})<0 \text{ with } \alpha=e^{\frac{I\pi}{n}}}\rbrace.$$
In particular, in the complex case the Bergman kernel is given in \cite{CK2005} by $$K_{\mathcal{W}_{\C_I}^n}(z,w)=(-1)^{n}n^2\frac{z^{n-1}\bar{w}^{n-1}}{(z^n-(-1)^{n}\bar{w}^n)^2}.$$
Let $\mathcal{W}_\Hq^n$ denotes the axially symmetric completion of $\mathcal{W}_{\C_I}^n$ . In the next result, we compute the quaternionic slice hyperholomorphic Bergman kernel on $\mathcal{W}_\Hq^n$:
\begin{thm} \label{Wker}
For all $(q,r)\in\mathcal{W}_\Hq^n\times\mathcal{W}_\Hq^n$, we have
$$K_{\mathcal{W}_\Hq^n}(q,r)=(-1)^{n}n^2q^{n-1}(\bar{q}^{2n}-2(-1)^{n}\bar{q}^n\bar{r}^n+\bar{r}^{2n})\bar{r}^{n-1}(\vert q\vert^2-2(-1)^{n}Re(q^n)\bar{r}^n+\bar{r}^{2n})^{-2}.$$
\end{thm}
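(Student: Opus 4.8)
The plan is to mimic the scheme used above for the half-ball: exhibit the kernel as a slice regular extension, identify it on a single slice with the known complex Bergman kernel, and then appeal to the Identity Principle for slice regular functions. First I would fix $r\in\mathcal{W}_\Hq^n$ together with a unit $J\in\Sq$ such that $r\in\C_J$, and recall that by definition $K_{\mathcal{W}_\Hq^n}(q,r)=ext[K_{\mathcal{W}_{\C_J}^n}(z,r)](q)$, where on the slice
$$
K_{\mathcal{W}_{\C_J}^n}(z,r)=(-1)^n n^2\,z^{n-1}\bar r^{n-1}\,(z^n-(-1)^n\bar r^n)^{-2},\qquad z\in\mathcal{W}_{\C_J}^n .
$$
The factor $z^{n-1}$ extends at once to $q^{n-1}$ and the constant $\bar r^{n-1}$ is untouched, so the whole difficulty is concentrated in extending the rational factor $(z^n-(-1)^n\bar r^n)^{-2}$ slice regularly in $q$.

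To treat that factor I would pass to the $*$-inverse via symmetrization, exactly as in the computation of $\Phi^r$ for the half-ball. Setting $g(q):=q^n-(-1)^n\bar r^n$ with slice-conjugate $g^c(q)=q^n-(-1)^n r^n$, the symmetrization
$$
g^s(q)=g(q)*g^c(q)=q^{2n}-2(-1)^n\mathrm{Re}(r^n)\,q^n+|r|^{2n}
$$
has real coefficients, hence is intrinsic and central for the $*$-product; therefore
$$
(z^n-(-1)^n\bar r^n)^{-2}\ \text{ extends to }\ \big(g^s(q)\big)^{-2}\,(q^n-(-1)^n r^n)^{*2}.
$$
Reassembled with the remaining factors this produces a function that is slice regular on $\mathcal{W}_\Hq^n$ by construction, and whose restriction to $\C_J$ is $K_{\mathcal{W}_{\C_J}^n}(z,r)$ because on one slice the $*$-product collapses to the commutative product of $\C_J$.

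It then remains to recognise this extension in the stated closed form. The bridge is provided by the elementary identities, valid since $q$ and $\bar q$ commute,
$$
|q|^{2n}=q^n\bar q^n,\qquad \mathrm{Re}(q^n)=\tfrac12\big(q^n+\bar q^n\big),
$$
which allow me to factor, on the slice $\C_J$, both the scalar denominator $D(q,r)=|q|^{2n}-2(-1)^n\mathrm{Re}(q^n)\bar r^n+\bar r^{2n}$ as
$$
D(z,r)=(z^n-(-1)^n\bar r^n)(\bar z^n-(-1)^n\bar r^n)
$$
and the numerator as $\bar q^{2n}-2(-1)^n\bar q^n\bar r^n+\bar r^{2n}=(\bar q^n-(-1)^n\bar r^n)^2$. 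On $\C_J$ the two factors $(\bar z^n-(-1)^n\bar r^n)$ cancel and the quotient collapses to $(z^n-(-1)^n\bar r^n)^{-2}$, so the displayed expression agrees with the extension on $\C_J$. Since both are slice regular on the axially symmetric slice domain $\mathcal{W}_\Hq^n$ and coincide on one slice, the Identity Principle forces their equality and yields the asserted formula for $K_{\mathcal{W}_\Hq^n}(q,r)$.

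The step I expect to be the main obstacle is the noncommutative bookkeeping inside the $*$-inverse: one must track the order of $q^n$, $r^n$ and the constant $\bar r^{n-1}$ while expanding $(q^n-(-1)^n\bar r^n)^{-*2}$ and recombining the factors, and one must check that the superficially non-regular pieces $\bar q^n$ and $|q|^{2n}$ appearing in the closed form really do organise themselves into the slice regular extension. Once the restriction to a single slice has been matched with the complex kernel, however, the Identity Principle dispenses with any term-by-term verification of slice regularity of the final expression, so the genuinely delicate point is precisely the slice computation of the $*$-inverse together with the factorisation identities above.
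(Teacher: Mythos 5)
Your construction of the slice regular extension via the $*$-inverse is sound: $g(q)=q^n-(-1)^n\bar r^n$ has symmetrization $g^s(q)=q^{2n}-2(-1)^n\mathrm{Re}(r^n)q^n+|r|^{2n}$ with real coefficients, and $(g^s)^{-2}(g^c)^{*2}$ is indeed the unique slice regular function restricting to $(z^n-(-1)^n\bar r^n)^{-2}$ on $\C_J$. The gap is in the last step. You verify that the stated closed form agrees with the complex kernel only for $q=z\in\C_J$, where $q$ and $\bar r^n$ commute and the factorizations $D(z,r)=(z^n-(-1)^n\bar r^n)(\bar z^n-(-1)^n\bar r^n)$ and $\bar z^{2n}-2(-1)^n\bar z^n\bar r^n+\bar r^{2n}=(\bar z^n-(-1)^n\bar r^n)^2$ hold; off that slice these factorizations fail and your cancellation does not go through. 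To upgrade agreement on one slice to agreement everywhere via the Identity Principle you must first know that the right-hand side is slice regular in $q$ --- but that expression contains $\bar q^n$, $|q|^{2n}$ and $\mathrm{Re}(q^n)$, none of which is slice regular, and the slice regularity of the combination is precisely the content of the theorem. So your claim that ``the Identity Principle dispenses with any term-by-term verification of slice regularity of the final expression'' is circular. Note also that the natural output of your own $*$-computation is $q^{n-1}\,g^s(q)^{-2}\,(q^{2n}-2(-1)^nq^nr^n+r^{2n})\,\bar r^{n-1}$, whose denominator $g^s(q)$ (a real-coefficient polynomial in $q$ depending on $r$ only through $\mathrm{Re}(r^n)$ and $|r|^{2n}$) is a genuinely different function from $D(q,r)=|q|^{2n}-2(-1)^n\mathrm{Re}(q^n)\bar r^n+\bar r^{2n}$ even on $\C_J$; converting one expression into the other at a general quaternion $q$ is a nontrivial identity that you never establish.

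The reason the analogous argument works for the half-ball and fails here is that there the closed form $(1+q^2)\Phi^r(q)(1+\overline{r}^2)$ is manifestly slice regular (an intrinsic function times a slice regular function times a constant on the right), so the Identity Principle legitimately applies; the wedge formula has no such manifest structure. The paper avoids the issue entirely by using the representation formula: for $q=x+I_qy$ and $z=x+Jy$ it writes $K_{\mathcal{W}_\Hq^n}(q,r)=\tfrac12\bigl(K^r(z)+K^r(\bar z)\bigr)+\tfrac{I_qJ}{2}\bigl(K^r(\bar z)-K^r(z)\bigr)$ and computes the symmetric and antisymmetric parts of the complex kernel explicitly, which produces the stated expression at every $q$, not just on $\C_J$. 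To repair your argument you should either carry out that representation-formula computation, or prove directly the pointwise identity between $(g^s)^{-2}(g^c)^{*2}$ and the stated closed form for arbitrary $q$.
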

\begin{proof}
Let $q,r\in\mathcal{W}_\Hq^n$ be such that $r$ belongs to $\C_J$ where $J\in\mathbb{S}$. Then, for $q=x+I_qy$ and $z=x+Jy$ thanks to the extension operator we have that
$$K_{\mathcal{W}_\Hq^n}(q,r)=\displaystyle \frac{1}{2}\left(K_{\mathcal{W}_{\C_J}^n}^r(z)+K_{\mathcal{W}_{\C_J}^n}^r(\bar{z})\right)+\frac{I_qJ}{2}\left(K_{\mathcal{W}_{\C_J}^n}^r(\bar{z})-K_{\mathcal{W}_{\C_J}^n}^r(z)\right).$$
Thus, using the complex case formula we get 
\[
\begin{split}
K_{\mathcal{W}_{\C_J}^n}^r(z)+K_{\mathcal{W}_{\C_J}^n}^r(\bar{z})=2(-1)^nn^2\frac{Re(z^{n-1}\bar{z}^{2n})\bar{r}^{n-1}+Re(z^{n-1})\bar{r}^{3n-1}}{\left(\vert z \vert^{2n}+\bar{r}^{2n}-2(-1)^{n}Re(z^n)\bar{r}^n\right)^2}
\\
-2(-1)^nn^2\frac{2(-1)^{n}Re(z^{n-1}\bar{z}^n)\bar{r}^{2n-1}}{\left(\vert z \vert^{2n}+\bar{r}^{2n}-2(-1)^{n}Re(z^n)\bar{r}^n\right)^2}.
\end{split}
\]
and

\[
\begin{split}
K_{\mathcal{W}_{\C_J}^n}^r(\bar{z})-K_{\mathcal{W}_{\C_J}^n}^r(z)=2(-1)^nn^2\frac{-Im(z^{n-1}\bar{z}^{2n})J\bar{r}^{n-1}-Im(z^{n-1})J\bar{r}^{3n-1}}{\left(\vert z \vert^{2n}+\bar{r}^{2n}-2(-1)^{n}Re(z^n)\bar{r}^n\right)^2}
\\
+2(-1)^nn^2\frac{2(-1)^{n}Im(z^{n-1}\bar{z}^n)J\bar{r}^{2n-1}}{\left(\vert z \vert^{2n}+\bar{r}^{2n}-2(-1)^{n}Re(z^n)\bar{r}^n\right)^2}.
\end{split}
\]
Therefore, developing the computations we obtain
\[
\begin{split}
K_{\mathcal{W}_\Hq^n}(q,r)=(-1)^{n}n^2\left(q^{n-1}\bar{q}^{2n}\bar{r}^{n-1}-2(-1)^nq^{n-1}\bar{q}^n\bar{r}^{2n-1}+q^{n-1}\bar{r}^{3n-1} \right)
\\
\times \left(\vert q \vert^{2n}+\bar{r}^{2n}-2(-1)^{n}Re(q^n)\bar{r}^n\right)^{-2}.
\end{split}
\]
Hence, we finally get $$K_{\mathcal{W}_\Hq^n}(q,r)=(-1)^{n}n^2q^{n-1}(\bar{q}^{2n}-2(-1)^{n}\bar{q}^n\bar{r}^n+\bar{r}^{2n})\bar{r}^{n-1}(\vert q\vert^2-2(-1)^{n}Re(q^n)\bar{r}^n+\bar{r}^{2n})^{-2}.$$
This completes the proof.
\end{proof}
\begin{rem}
Observe that for the case $n=1$ in Theorem \ref{Wker} the Bergman kernel function coincide with the result obtained on the quaternionic half space in \cite{CGS2015}.
\end{rem}
\section{The Bergman-Fueter transform and some of its consequences}
In this section, we study the Bergman-Fueter integral transform on different axially symmetric slice domains $U$ on the quaternions, namely we deal with the unit ball, the half space and the unit half ball. In particular, we obtain some new generating functions and integral representations of the quaternionic regular polynomials $(Q_k)_{k\geq 0}$ obtained in Section 3. We give also the sequential characterization of the range of the Fueter mapping on the slice hyperholomorphic Bergman space on the quaternionic unit ball. First, associated to $U$ we recall from \cite{CGS2011} the following
\begin{defn}[Bergman-Fueter transform associated to $U$]
  Let $f:U\longrightarrow \mathbb{H}$ be in the slice hyperholomorphic Bergman space of the second kind $\mathcal{A}_{Slice}(U)$. Then, we define the Bergman-Fueter transform of $f$ associated to $U$ to be $$\breve{f}(q):=\displaystyle\int_{U\cap\mathbb{C}_I}K_{BF}^{U}(q,r)f(r)d\sigma(r),$$
  where $K_{BF}^{U}$ is the Bergman-Fueter kernel on $U$ defined through the following formula $$K_{BF}^{U}(q,r):=\Delta K_{U}(q,r), \text{  } \forall (q,r)\in U\times U.$$
  The Laplacian $\Delta$ is taken with respect to the variable $q$ and $d\sigma(r)$ defines the restriction of the normalized Lebesgue measure on $U_I=U\cap\C_I$.
   \end{defn}

\subsection{The quaternionic unit ball case $U=\mathbb{B}$}
 In \cite{CGS2015} an explicit expression of the Fueter-Bergman kernel was obtained when $U$ is the quaternionic unit ball $\mathbb{B}$. More precisely, we have the following result originally proved in \cite{CGS2015}:
  \begin{thm} \label{FBKB}
 For all $(q,r)\in \mathbb{B}\times \mathbb{B},$ we have 
 \begin{equation}\label{a}
\begin{split}
  K_{BF}^{\mathbb{B}}(q,r)=-\displaystyle 4\left(1-2Re(q)\bar{r}+|q|^2\bar{r}^2 \right)^{-2}\bar{r}^2+2(1-2\bar{q} \bar{r}+\bar{q}\bar{r}^2) \\ \times \left(1-2Re(q)\bar{r}+|q|^2\bar{r}^2 \right)^{-3}\bar{r}^2.
\end{split}
\end{equation}
 Furthermore, if we set $$R(q,r)=\left(1-2Re(q)\overline{r}+|q|^2\overline{r}^2 \right)^{-1},$$ then  $$K_{BF}^{\mathbb{B}}(q,r)=-4\left[R(q,r)+2K_{\mathbb{B}}(q,r)\right] R(q,r)\overline{r}^2.$$
 \end{thm}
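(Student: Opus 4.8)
The plan is to use the defining relation $K_{BF}^{\mathbb{B}}(q,r)=\Delta K_{\mathbb{B}}(q,r)$, with $\Delta$ the Laplacian in $q$, so that the whole statement reduces to two tasks: having the slice Bergman kernel of the second kind $K_{\mathbb{B}}$ on $\mathbb{B}$ in closed form, and then differentiating it. For the first task I would start from the orthogonal basis $\{q^n\}_{n\geq 0}$ of the slice Bergman space on $\mathbb{B}$, for which $\|q^n\|^2=\tfrac{1}{n+1}$, giving $K_{\mathbb{B}}(q,r)=\sum_{n\geq 0}(n+1)q^n\bar r^n$. Multiplying this series on the right by the polynomial $p(q,r):=1-2\mathrm{Re}(q)\bar r+|q|^2\bar r^2$, whose coefficients $2\mathrm{Re}(q)$ and $|q|^2$ are real scalars, and reindexing (using that $\bar q$ commutes with $q$ and that $R(q,r)=p(q,r)^{-1}$ commutes with $\bar r$, both lying in $\mathbb{C}_{I_r}$), collapses the sum and yields the closed form $K_{\mathbb{B}}(q,r)=(1-2\bar q\bar r+\bar q^2\bar r^2)R(q,r)^2$. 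Note that the numerator is $1-2\bar q\bar r+\bar q^2\bar r^2$ and \emph{not} $(1-\bar q\bar r)^2$, since $\bar r\,\bar q\neq\bar q\,\bar r$ in general; a quick check on the real diagonal $q=r=t$ gives $(1-t^2)^{-2}$, matching $\sum_{n}(n+1)t^{2n}$. This is the expression recorded in \cite{CGS2015} and used in Theorem \ref{KB+}.

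For the differentiation step I see two natural routes. The termwise route applies $\Delta$ under the sum and invokes the Section 3 computation $\Delta(q^n)=-2(n-1)nQ_{n-2}(q)$ from Remark \ref{remQ}, producing $K_{BF}^{\mathbb{B}}(q,r)=-2\sum_{k\geq 0}(k+3)(k+2)(k+1)Q_k(q)\bar r^{k+2}$, which then has to be resummed into the stated rational form. The more direct route is to differentiate the closed form $(1-2\bar q\bar r+\bar q^2\bar r^2)R^2$ itself, writing $\Delta=\overline{\partial}\,\partial=\partial\,\overline{\partial}$ as in the preliminaries, and exploiting that the $q$-dependence of $R$ enters only through $\mathrm{Re}(q)=x_0$ and $|q|^2$. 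In either case, once $K_{BF}^{\mathbb{B}}$ has been brought to the purely rational shape involving $R^2\bar r^2$ and $R^3\bar r^2$, the second displayed identity $K_{BF}^{\mathbb{B}}=-4[R+2K_{\mathbb{B}}]R\bar r^2$ follows by substituting $K_{\mathbb{B}}=(1-2\bar q\bar r+\bar q^2\bar r^2)R^2$ and collecting terms; this last reconciliation is a purely algebraic check, which one can pre-verify on the real diagonal, where both sides equal $-12t^2(1-t^2)^{-4}$.

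The hard part will be the non-commutative bookkeeping in the Laplacian step. In the direct route, $\Delta$ of a product generates cross terms of the form $2\sum_{l=0}^3(\partial_{x_l}(1-2\bar q\bar r+\bar q^2\bar r^2))(\partial_{x_l}R^2)$, and one must track scrupulously the ordering of the non-central factors $\bar q$, $\bar r$ and $R$: here $R$ and $\bar r$ commute, and $\bar q$ commutes with $q$ and $|q|^2$, but $\bar q$ commutes with neither $\bar r$ nor $R$. In the series route the obstacle is instead the resummation of $\sum_k(k+3)(k+2)(k+1)Q_k(q)\bar r^{k+2}$, which I would handle by writing $Q_k=-\tilde f_{k+2}/(2(k+1)(k+2))$ and matching against successive derivatives of the geometric-type kernel $\sum_n q^n\bar r^n=(1-\bar q\bar r)R$. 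I expect the final simplification and the matching of the two displayed expressions to be the only genuinely delicate points; everything else is routine once the ordering conventions between $\bar q$, $\bar r$ and $R$ are fixed.
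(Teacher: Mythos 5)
The paper does not actually prove this theorem: it is quoted verbatim from \cite{CGS2015} ("the following result originally proved in..."), so there is no internal proof to compare against. Judged on its own merits, your plan is sound and all the intermediate facts you rely on check out. The closed form $K_{\mathbb{B}}(q,r)=(1-2\bar q\bar r+\bar q^2\bar r^2)R(q,r)^2$ does follow from right-multiplying the series $\sum_n(n+1)q^n\bar r^n$ twice by $p(q,r)=1-2\mathrm{Re}(q)\bar r+|q|^2\bar r^2$ exactly as you describe (the two telescopings give $K_{\mathbb{B}}p=\sum_mq^m\bar r^m-\bar q\sum_mq^m\bar r^{m+1}$ and then $K_{\mathbb{B}}p^2=1-2\bar q\bar r+\bar q^2\bar r^2$). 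Your "direct route" is also the one the paper itself uses for the analogous half-space kernel in Section 6.2, and it does go through: writing $N=1-2\bar q\bar r+\bar q^2\bar r^2$, one finds $\Delta N=-4\bar r^2$, $\Delta(R^2)=8R^3\bar r^2$, and the cross term collapses because $\sum_{l=0}^3(\partial_{x_l}N)(\partial_{x_l}p)=4N\bar r^2$ (using $\underline{x}=x_0-\bar q$ and $8x_0\bar q=4|q|^2+4\bar q^2$), giving $\Delta(NR^2)=-4R^2\bar r^2-16NR^3\bar r^2+8NR^3\bar r^2=-4[R+2K_{\mathbb{B}}]R\bar r^2$. So the delicate non-commutative bookkeeping you flag is real but manageable, and the ordering conventions you fix ($R$ and $\bar r$ commute, $\bar q$ commutes with neither) are the right ones.

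Two caveats. First, your proposal defers the Laplacian computation itself, which is the entire content of the theorem; as written it is a (correct) strategy, not a proof, and the series route in particular leaves the resummation of $\sum_k(k+1)(k+2)(k+3)Q_k(q)\bar r^{k+2}$ entirely open — note that the paper runs this implication in the opposite direction, using the present theorem together with Proposition \ref{sekb} to \emph{deduce} that sum in Theorem \ref{GFQ}, so you cannot borrow it. Second, your real-diagonal check silently corrects the statement: formula \eqref{a} as printed is internally inconsistent with the second display (the factor $-4$ must distribute over both terms, as in the bracketed half-space formula of Section 6.2, and $\bar q\bar r^2$ should read $\bar q^2\bar r^2$); taken literally, \eqref{a} does not equal $-12t^2(1-t^2)^{-4}$ on the diagonal. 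You are proving the right statement, namely $K_{BF}^{\mathbb{B}}=-4[R+2K_{\mathbb{B}}]R\bar r^2$, but you should say explicitly that this is the form you establish and that \eqref{a} is its expansion, otherwise the "purely algebraic reconciliation" you promise at the end will appear to fail.
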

 We prove the following
 \begin{prop} \label{sekb}
 Let $(q,r)\in\mathbb{B}\times\mathbb{B},$
 we have $$K_{BF}^{\mathbb{B}}(q,r)=-2\displaystyle\sum_{k=0}^{\infty}(k+1)(k+2)(k+3)Q_k(q)\bar{r}^{k+2}.$$
\end{prop}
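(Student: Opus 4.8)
The plan is to read $K_{BF}^{\mathbb{B}}$ off the power series of the slice hyperholomorphic Bergman kernel of the unit ball, rather than from the closed form of Theorem \ref{FBKB}. Since the monomials $q^n$ form an orthogonal basis of $\mathcal{A}_{Slice}(\mathbb{B})$ and, computing in polar coordinates on $\mathbb{B}_I$ with the normalization $\frac{1}{\pi}$, one finds $\|q^n\|^2=\frac{1}{n+1}$, the reproducing kernel is
$$K_{\mathbb{B}}(q,r)=\sum_{n=0}^{\infty}(n+1)\,q^n\overline{r}^n,\qquad (q,r)\in\mathbb{B}\times\mathbb{B}.$$
By the very definition of the Bergman-Fueter kernel we have $K_{BF}^{\mathbb{B}}(q,r)=\Delta K_{\mathbb{B}}(q,r)$, with $\Delta$ acting on the variable $q$, so the first step is to apply $\Delta$ to this series term by term.

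Once the interchange of $\Delta$ with the summation is justified, I would invoke the identity recorded in Remark \ref{remQ}, namely $\Delta(q^n)=-2(n-1)n\,Q_{n-2}(q)$ for all $n\geq 2$, together with $\Delta(1)=\Delta(q)=0$. This yields
$$K_{BF}^{\mathbb{B}}(q,r)=\sum_{n=2}^{\infty}(n+1)\,\Delta(q^n)\,\overline{r}^n=-2\sum_{n=2}^{\infty}(n-1)n(n+1)\,Q_{n-2}(q)\,\overline{r}^n.$$
Reindexing by $k=n-2$ turns the triple product $(n-1)n(n+1)$ into $(k+1)(k+2)(k+3)$ and the power $\overline{r}^n$ into $\overline{r}^{k+2}$, giving exactly
$$K_{BF}^{\mathbb{B}}(q,r)=-2\sum_{k=0}^{\infty}(k+1)(k+2)(k+3)\,Q_k(q)\,\overline{r}^{k+2},$$
which is the asserted formula.

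The main obstacle is the legitimacy of differentiating the series for $K_{\mathbb{B}}$ under the summation sign. I would handle this by fixing compact sets $|q|\leq\rho<1$ and $|r|\leq\rho'<1$ and observing that the series for $K_{\mathbb{B}}$, as well as all the series obtained from it by applying the first- and second-order partial derivatives in the real coordinates $x_0,\dots,x_3$ of $q$, are each dominated on such compacts by a convergent numerical series of the form $\sum_n C\,n^3\,\rho^{\,n-2}(\rho')^{n}$ (here one uses the bound $|Q_{n-2}(q)|\leq|q|^{n-2}$ established earlier). Hence they converge normally, $\Delta$ may be applied term by term, and the resulting series genuinely represents $\Delta K_{\mathbb{B}}$. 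One could in principle instead expand the closed-form expression of Theorem \ref{FBKB} directly into a power series in $\overline{r}$, but working from the power series of $K_{\mathbb{B}}$ is far more transparent and avoids the noncommutative algebra hidden in \eqref{a}.
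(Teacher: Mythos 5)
Your proposal is correct and follows essentially the same route as the paper: expand $K_{\mathbb{B}}(q,r)=\sum_{k\geq 0}(k+1)q^k\overline{r}^k$, apply the Laplacian term by term via $\Delta(q^k)=-2(k-1)kQ_{k-2}(q)$, and reindex. The only additions are your justification of the term-by-term differentiation (which the paper omits) and deriving the series from orthogonality of the monomials rather than from the extension operator; both are minor variations on the same argument.
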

 \begin{proof}
 Let $(q,r)\in \mathbb{B}\times \mathbb{B},$ making use of the slice hyperholomorphic extension operator it is clear that the slice Bergman kernel on $\mathbb{B}$ is given by the series expansion
 $$\displaystyle K_{\mathbb{B}}(q,r)=\sum_{k=0}^\infty (k+1)q^k\bar{r}^k. $$ Therefore, by definition of the Bergman-Fueter kernel we obtain:
 \[
\begin{split}
K_{BF}^{\mathbb{B}}(q,r) &= \tau_q K_{\mathbb{B}}(q,r)\\
&= -2\displaystyle\sum_{k=2}^{\infty}(k-1)k(k+1)Q_{k-2}(q)\bar{r}^{k}.
\\
&=-2\displaystyle\sum_{k=0}^{\infty}(k+1)(k+2)(k+3)Q_k(q)\bar{r}^{k+2}.
\end{split}
\]
\end{proof}
As a consequence of the latter result, we obtain the following generating function associated to the quaternionic regular polynomials $(Q_k)_{k\geq 0}$:
 \begin{thm} \label{GFQ}
 For all $(q,r)\in \mathbb{B}\times\mathbb{B},$ we have $$\displaystyle\sum_{k=0}^{\infty}(k+1)(k+2)(k+3)Q_k(q)\bar{r}^{k}=2R^2(q,r)+4K_{\mathbb{B}}(q,r)R(q,r);$$
 where $$R(q,r)= \left(1-2Re(q)\overline{r}+|q|^2\overline{r}^2 \right)^{-1} \text{ and } K_{\mathbb{B}}(q,r)=(1-2\bar{q}\bar{r}+\bar{q}^2\bar{r}^2)R(q,r)^2.$$
\end{thm}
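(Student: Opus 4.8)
The plan is to read off the generating function directly from the two preceding results on the Bergman-Fueter kernel of the unit ball, with no real computation needed. First I would recall from Proposition \ref{sekb} the series expansion
$$K_{BF}^{\mathbb{B}}(q,r)=-2\sum_{k=0}^{\infty}(k+1)(k+2)(k+3)Q_k(q)\bar{r}^{k+2},$$
and observe that, since the factor $\bar{r}^{2}$ sits on the right of each summand, it may be collected on the right of the whole series:
$$K_{BF}^{\mathbb{B}}(q,r)=-2\left(\sum_{k=0}^{\infty}(k+1)(k+2)(k+3)Q_k(q)\bar{r}^{k}\right)\bar{r}^{2}.$$
Thus the series I want to evaluate is, up to the right factor $-2\bar{r}^{2}$, exactly $K_{BF}^{\mathbb{B}}(q,r)$.

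Next I would invoke the closed form of the Bergman-Fueter kernel from Theorem \ref{FBKB}, namely
$$K_{BF}^{\mathbb{B}}(q,r)=-4\left[R(q,r)+2K_{\mathbb{B}}(q,r)\right]R(q,r)\bar{r}^{2},$$
in which the factor $\bar{r}^{2}$ again stands at the extreme right. Equating the two expressions for $K_{BF}^{\mathbb{B}}(q,r)$ and, for $r\neq 0$, cancelling the invertible quaternion $\bar{r}^{2}$ on the right of both sides (then dividing by $-2$) yields
$$\sum_{k=0}^{\infty}(k+1)(k+2)(k+3)Q_k(q)\bar{r}^{k}=2\left[R(q,r)+2K_{\mathbb{B}}(q,r)\right]R(q,r)=2R^{2}(q,r)+4K_{\mathbb{B}}(q,r)R(q,r),$$
which is precisely the asserted identity.

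The only point requiring a little care is the non-commutative bookkeeping together with the excluded value $r=0$. Because in both input formulas the cancelled factor $\bar{r}^{2}$ genuinely multiplies on the right, the surviving products come out as $R^{2}$ and $K_{\mathbb{B}}R$ in exactly the order stated in the theorem, so no reordering is introduced. The value $r=0$ is then recovered either by continuity of both sides in $r$ on $\mathbb{B}$, or by a one-line direct check: at $r=0$ only the $k=0$ term survives on the left, giving $6Q_{0}(q)=6$, while $R(q,0)=K_{\mathbb{B}}(q,0)=1$ gives $2+4=6$ on the right. I expect no genuine obstacle; the result is an algebraic corollary of Proposition \ref{sekb} and Theorem \ref{FBKB}, and the write-up amounts to making the cancellation of $\bar{r}^{2}$ explicit.
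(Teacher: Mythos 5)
Your proposal is correct and follows essentially the same route as the paper: the authors likewise obtain the identity by combining the closed form of $K_{BF}^{\mathbb{B}}$ from Theorem \ref{FBKB} with the series expansion from Proposition \ref{sekb} and stripping the common right factor $-2\bar{r}^{2}$. Your extra care about the right-sided cancellation and the $r=0$ case is a reasonable (if optional) refinement of the same argument.
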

 \begin{proof}
 Note that Theorem \ref{FBKB}  gives  $$K_{BF}^{\mathbb{B}}(q,r)=-4\left[R(q,r)+2K_{\mathbb{B}}(q,r)\right] R(q,r)\overline{r}^2.$$
 This result combined with Proposition \ref{sekb} leads to $$\displaystyle\sum_{k=0}^{\infty}(k+1)(k+2)(k+3)Q_k(q)\bar{r}^{k}=2R^2(q,r)+4K_{\mathbb{B}}(q,r)R(q,r).$$
 This completes the proof.
\end{proof}
In particular, we get the following series representation
 \begin{cor}\label{qsum}
 Let $-1<q<1$ and $r\in\mathbb{B}$. Then, we have
 $$\displaystyle\sum_{k=0}^{\infty}\frac{(k+1)(k+2)(k+3)}{6}q^k\bar{r}^{k}=(1-q\bar{r})^{-4}.$$
\end{cor}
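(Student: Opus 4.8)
The plan is to obtain this identity as a direct specialization of Theorem \ref{GFQ} to the case of a real variable $q$. First I would record how the two building blocks of that generating function collapse when $q\in(-1,1)\subset\R$. In this case $Re(q)=q$, $\overline{q}=q$ and $|q|^2=q^2$, so the auxiliary function becomes a perfect square,
$$R(q,r)=\left(1-2q\overline{r}+q^2\overline{r}^2\right)^{-1}=\left((1-q\overline{r})^2\right)^{-1}=(1-q\overline{r})^{-2},$$
all computations taking place in the commutative plane $\C_I$ generated by $\overline{r}$. Feeding this into the formula for $K_{\mathbb{B}}$ given in Theorem \ref{GFQ}, and again using $\overline{q}=q$ so that $1-2\overline{q}\,\overline{r}+\overline{q}^2\overline{r}^2=(1-q\overline{r})^2$, I get
$$K_{\mathbb{B}}(q,r)=(1-q\overline{r})^2(1-q\overline{r})^{-4}=(1-q\overline{r})^{-2}.$$
Thus both $R(q,r)$ and $K_{\mathbb{B}}(q,r)$ reduce to the same elementary factor.

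Next I would simplify the left-hand side of Theorem \ref{GFQ} by showing that the polynomials $Q_k$ degenerate to plain powers on the reals. From the expression \eqref{Qk2}, namely $Q_k(q)=\sum_{j=0}^k T^k_j\,q^{k-j}\overline{q}^j$, and from $\overline{q}=q$, every monomial $q^{k-j}\overline{q}^j$ equals $q^k$, whence $Q_k(q)=q^k\sum_{j=0}^k T^k_j$. The normalization $\sum_{j=0}^k T^k_j=1$ is precisely the computation already carried out in the estimate $|Q_k(q)|\leq|q|^k$, so $Q_k(q)=q^k$ for $q\in\R$.

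Substituting these simplifications into Theorem \ref{GFQ}, and using $q^k\overline{r}^k=(q\overline{r})^k$ (valid because the real scalar $q$ commutes past $\overline{r}$), yields
$$\sum_{k=0}^{\infty}(k+1)(k+2)(k+3)(q\overline{r})^k=2(1-q\overline{r})^{-4}+4(1-q\overline{r})^{-4}=6(1-q\overline{r})^{-4},$$
and dividing by $6$ gives the claim. The series converges absolutely since $r\in\mathbb{B}$ forces $|q\overline{r}|=|q|\,|r|<1$, so no analytic subtlety arises. Honestly, there is no real obstacle here: the result is a one-line corollary once the two observations above are in place. The only points deserving care are the scalar evaluation $\sum_{j=0}^k T^k_j=1$ and the remark that, because $q$ is real and hence commutes with $\overline{r}$, the double-indexed combination $Q_k(q)\overline{r}^k$ genuinely becomes an honest power series in the single quaternion $q\overline{r}$, which lives in a copy of $\C$ inside $\Hq$ where the classical identity $\sum_{k\ge0}\binom{k+3}{3}x^k=(1-x)^{-4}$ applies.
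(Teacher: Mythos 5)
Your argument is correct and is essentially the paper's own proof: both specialize Theorem \ref{GFQ} to real $q$, using $Q_k(q)=q^k$ via $\sum_{j=0}^k T^k_j=1$ together with $R(q,r)=K_{\mathbb{B}}(q,r)=(1-q\bar{r})^{-2}$, and then combine $2+4=6$. Your write-up simply spells out these reductions in more detail than the paper does.
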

 \begin{proof}
 We only need to observe that if $q\in\R$ then for all $k\geq 0$ we have $Q_k(q)=q^k$ thanks to the identity $\sum_{j=0}^kT_j^k=1$. Moreover, since $-1<q<1$ we have $$R(q,r)=K_\mathbb{B}(q,r)=(1-q\bar{r})^{-2}.$$ Finally, the proof is concluded by making use of Theorem \ref{GFQ}.
\end{proof}
 \begin{rem}
 As a consequence of Corollary \ref{qsum}  we observe that for all $s,t>1$ we have $$\displaystyle\sum_{k=0}^{\infty}\frac{(k+1)(k+2)(k+3)}{s^kt^k}=6\left(\frac{st}{1-st}\right)^{4}.$$ Note also that using the fact $$\forall n\geq 0: \displaystyle \sum_{k=1}^n k(k+1)=\frac{n(n+1)(n+2)}{3},$$
 we have $$\displaystyle \sum_{j=1}^\infty \sum_{k=1}^jk(k+1)q^{j-1}\bar{r}^{j-1}=2(1-q\bar{r})^{-4}.$$
\end{rem}
The right Bergman-Fueter space $\mathcal{B}(\mathbb{B})$  is the range of the slice hyperholomorphic Bergman space through the Fueter mapping. Indeed, it is defined by
$$ \mathcal{B}(\mathbb{B}):=\{ \tau (f); \text{ } f\in \mathcal{A}_{Slice}(\mathbb{B})\}.$$
Then, the next result gives the sequential characterization of the Bergman-Fueter space $\mathcal{B}(\mathbb{B})$:

\begin{thm} \label{BHcar}
Let $g\in \mathcal{R}(\mathbb{B})$. Then, $g\in\mathcal{B}(\mathbb{B})$ if and only if the following conditions are satisfied:
\begin{enumerate}
\item[i)] $\forall q\in\mathbb{B}, \textbf{ }  g(q)=\displaystyle\sum_{k=0}^\infty Q_k(q)\alpha_k$ where $(\alpha_k)_{k\geq 0}\subset \Hq.$
\item[ii)]$\displaystyle \sum_{k=0}^{\infty}\frac{|\alpha_k|^2}{(k+1)^2(k+2)^2(k+3)}<\infty.$
\end{enumerate}
 \end{thm}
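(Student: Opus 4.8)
The plan is to mirror the argument already used for Theorem \ref{AHcar}, simply replacing the Fock-space weights by the corresponding Bergman-space weights. The crucial input is an explicit orthogonal basis of $\mathcal{A}_{Slice}(\mathbb{B})$ together with its norm. From the series expansion of the slice Bergman kernel on the unit ball recorded in the proof of Proposition \ref{sekb}, namely $K_{\mathbb{B}}(q,r)=\sum_{k=0}^\infty (k+1)q^k\bar r^k$, one reads off that the monomials $f_k(q)=q^k$ form an orthogonal basis of $\mathcal{A}_{Slice}(\mathbb{B})$ with $\|q^k\|^2_{\mathcal{A}_{Slice}(\mathbb{B})}=\tfrac{1}{k+1}$. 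Consequently every $f\in\mathcal{A}_{Slice}(\mathbb{B})$ can be written as $f(q)=\sum_{k=0}^\infty q^k c_k$ with $(c_k)\subset\Hq$ and $\|f\|^2_{\mathcal{A}_{Slice}(\mathbb{B})}=\sum_{k=0}^\infty \frac{|c_k|^2}{k+1}<\infty$.

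For the direct implication I would suppose $g=\tau(f)\in\mathcal{B}(\mathbb{B})$ and apply the Fueter mapping termwise. Using the identities $\tau(1)=\tau(q)=0$ and $\tau(q^k)=-2(k-1)kQ_{k-2}(q)$ for $k\geq 2$ (exactly as exploited in the proof of Theorem \ref{AHcar}, and following from Remark \ref{remQ}), I obtain $g(q)=\sum_{k=0}^\infty Q_k(q)\alpha_k$ with $\alpha_k=-2(k+1)(k+2)c_{k+2}$, which is condition i). Substituting $|c_{k+2}|^2=\frac{|\alpha_k|^2}{4(k+1)^2(k+2)^2}$ into the Bergman norm of $f$ yields
\[
\frac{1}{4}\sum_{k=0}^\infty\frac{|\alpha_k|^2}{(k+1)^2(k+2)^2(k+3)}=\sum_{k=0}^\infty\frac{|c_{k+2}|^2}{k+3}\leq\|f\|^2_{\mathcal{A}_{Slice}(\mathbb{B})}<\infty,
\]
which is precisely condition ii); membership $g\in\mathcal{R}(\mathbb{B})$ is guaranteed by the Fueter mapping theorem.

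For the converse, assuming i) and ii), I would exhibit a preimage $h(q)=\sum_{k=2}^\infty q^k\beta_k$ with $\beta_k=-\frac{\alpha_{k-2}}{2(k-1)k}$. The relation $Q_k=-\frac{\tau(q^{k+2})}{2(k+1)(k+2)}$ from Remark \ref{remQ} immediately gives $\tau(h)=g$, while a direct computation of the Bergman norm, after the reindexing $j=k-2$, shows $\|h\|^2_{\mathcal{A}_{Slice}(\mathbb{B})}=\frac{1}{4}\sum_{j=0}^\infty\frac{|\alpha_j|^2}{(j+1)^2(j+2)^2(j+3)}<\infty$ by ii). Hence $h\in\mathcal{A}_{Slice}(\mathbb{B})$ and $g=\tau(h)\in\mathcal{B}(\mathbb{B})$.

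The argument is essentially routine bookkeeping once the basis and its norm are in hand; the only genuinely delicate point, and the step I would verify first, is the precise Bergman weight $\|q^k\|^2_{\mathcal{A}_{Slice}(\mathbb{B})}=\tfrac{1}{k+1}$ and the resulting exponent pattern $(k+1)^2(k+2)^2(k+3)$ in condition ii), since an off-by-one in this weight would corrupt the weighted $\ell^2$ summability condition and break the isomorphism.
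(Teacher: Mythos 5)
Your proposal is correct and follows essentially the same route as the paper's own proof: the same coefficient correspondence $\alpha_k=-2(k+1)(k+2)c_{k+2}$ in the forward direction, the same explicit preimage $h(q)=\sum_{k\geq 2}q^k\beta_k$ with $\beta_k=-\alpha_{k-2}/(2(k-1)k)$ in the converse, and the same weight bookkeeping yielding $(k+1)^2(k+2)^2(k+3)$. The only cosmetic difference is that you justify the Bergman weight $\|q^k\|^2=1/(k+1)$ by reading it off the kernel expansion, whereas the paper simply quotes the norm formula from the theory of slice Bergman spaces.
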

\begin{proof}
 First, note that by the Fueter mapping theorem we have $\mathcal{B}(\mathbb{B})\subset\mathcal{R}(\mathbb{B}).$ Let $g\in \mathcal{B}(\mathbb{B})$, thus $g=\tau(f)$ where $f\in \mathcal{A}_{Slice}(\mathbb{B})$ such that we have  $$f(q)=\displaystyle \sum_{k=0}^{\infty}q^kc_k , \text{ with } (c_k)\subset \Hq \text{ and } \Vert f \Vert^2=\sum_{k=0}^{\infty}\frac{|c_k|^2}{k+1}<\infty.$$
 However, $$\tau(1)=\tau(q)=0 \text{ and } \tau(q^k)=-2(k-1)kQ_{k-2}(q),\qquad \forall k\geq 2.$$
 Therefore, we get $$g(q)=\displaystyle\sum_{k=0}^\infty Q_k(q)\alpha_k \text{ with } \alpha_k=-2(k+1)(k+2)c_{k+2}, \qquad \forall k\geq 0.$$ Moreover, we have
 $$\displaystyle \sum_{k=0}^{\infty}\frac{|\alpha_k|^2}{(k+1)^2(k+2)^2(k+3)}=4\displaystyle \sum_{k=2}^{\infty}\frac{|c_{k}|^2}{k+1}\leq 4\Vert f \Vert^2 <\infty.$$

 Conversely, let us suppose that the conditions i) and ii) hold. Then, we consider the function $$h(q)=\displaystyle \sum_{k=2}^{\infty}q^k\beta_k , \text{ where } \beta_k=-\frac{\alpha_{k-2}}{2(k-1)k} , \qquad \forall k\geq 2.$$
 Thus, we get $g=\tau(h)$ thanks to the formula
 $$Q_{k}(q)=-\frac{\tau(q^{k+2})}{2(k+1)(k+2)},\qquad \forall k\geq 0.$$
 Moreover, note that we have
 $$\displaystyle \Vert h \Vert^2=\sum_{k=2}^{\infty}\frac{|\beta_k|^2}{k+1}=\frac{1}{4}\sum_{k=0}^{\infty}\frac{|\alpha_k|^2}{(k+1)^2(k+2)^2(k+3)}<\infty.$$
 Hence, $g=\tau(h)$ with $h\in \mathcal{A}_{Slice}(\mathbb{B})$. In particular, it shows that $g\in \mathcal{B}(\mathbb{B})$. This completes the proof.
\end{proof}
\begin{rem}
We observe that
$$\mathcal{B}(\mathbb{B}):=\lbrace f(q)=\displaystyle\sum_{k=0}^\infty Q_k(q)\alpha_k,  \forall q\in \mathbb{B},  \textbf{  } \alpha_k\in \Hq \text{;} \sum_{k=0}^{\infty}\frac{|\alpha_k|^2}{(k+1)^2(k+2)^2(k+3)}<\infty \rbrace.$$
\end{rem}
As we have seen in Section 4 for the Fock case, it is also possible to endow the Fueter-Bergman space $\mathcal{B}(\mathbb{B})$ with the inner product

$$\scal{f,g}_{\mathcal{B}(\mathbb{B})}:=\sum_{k=0}^{\infty}\frac{\overline{\alpha_k}\beta_k}{(k+1)^2(k+2)^2(k+3)},$$ for any  $f=\displaystyle\sum_{k=0}^\infty Q_k\alpha_k$ and $g=\displaystyle\sum_{k=0}^\infty Q_k\beta_k$. It is also possible to show that $\mathcal{B}(\mathbb{B})$ is a right quaternionic reproducing kernel Hilbert space whose reproducing kernel function is given by $$L(q,r):=L_r(q)=\displaystyle \sum_{k=0}^\infty (k+1)^2(k+2)^2(k+3)Q_k(q)Q_k(\bar{r}), \forall (q,r)\in \mathbb{B}\times\mathbb{B}.$$ So that, for any $f\in\mathcal{B}(\mathbb{B})$ and $p\in\mathbb{B}$ we have $$\scal{f,L_p}_{\mathcal{B}(\mathbb{B})}=f(p).$$

An integral representation of the polynomials $(Q_k)_{k\geq 0}$ on the quaternionic unit ball $\mathbb{B}$ in terms of the Bergman-Fueter kernel is given in the following:
\begin{prop}  \label{IR2}
Let $I\in \Sq$ , $q\in\mathbb{B}$ and $k\geq 0$. Then, we have
 $$Q_k(q)=\displaystyle -\frac{1}{2(k+1)(k+2)}\int_{\mathbb{B}_I}K_{BF}^{\mathbb{B}}(q,r)r^{k+2}d\sigma_I(r).$$

\end{prop}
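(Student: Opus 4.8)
The plan is to mimic the argument used for Proposition~\ref{IR1}~i) in the Fock setting, replacing the Fock--Fueter kernel and the Fock orthogonality relation by their Bergman counterparts on the quaternionic unit ball.

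First I would invoke Proposition~\ref{sekb}, which gives the series expansion
$$K_{BF}^{\mathbb{B}}(q,r)=-2\sum_{j=0}^{\infty}(j+1)(j+2)(j+3)Q_j(q)\bar{r}^{j+2},\qquad (q,r)\in\mathbb{B}\times\mathbb{B}.$$
Substituting this into the integral and interchanging the sum with the integral over the disk $\mathbb{B}_I$, I get
$$\int_{\mathbb{B}_I}K_{BF}^{\mathbb{B}}(q,r)r^{k+2}d\sigma_I(r)=-2\sum_{j=0}^{\infty}(j+1)(j+2)(j+3)Q_j(q)\int_{\mathbb{B}_I}\bar{r}^{j+2}r^{k+2}d\sigma_I(r).$$
The interchange is legitimate because, for fixed $q\in\mathbb{B}$, the partial sums of the kernel series converge in $\mathcal{A}_{Slice}(\mathbb{B})$, so term-by-term integration against $r^{k+2}$ is justified.

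Next I would exploit the orthogonality of the monomials in the slice Bergman space. Recall from the proof of Theorem~\ref{BHcar} that $f(q)=\sum_k q^k c_k\in\mathcal{A}_{Slice}(\mathbb{B})$ has squared norm $\sum_k |c_k|^2/(k+1)$, so that the $\{r^n\}_{n\geq 0}$ are pairwise orthogonal and, with the normalized measure $d\sigma_I$ of the Bergman--Fueter transform,
$$\int_{\mathbb{B}_I}\bar{r}^{j+2}r^{k+2}d\sigma_I(r)=\scal{r^{j+2},r^{k+2}}_{\mathcal{A}_{Slice}(\mathbb{B})}=\frac{\delta_{j,k}}{k+3}.$$
Only the term $j=k$ survives, and the surviving factor $(k+3)$ cancels the $1/(k+3)$ coming from the norm, leaving
$$\int_{\mathbb{B}_I}K_{BF}^{\mathbb{B}}(q,r)r^{k+2}d\sigma_I(r)=-2(k+1)(k+2)Q_k(q).$$
Multiplying by $-1/\big(2(k+1)(k+2)\big)$ gives the claimed identity.

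The only point requiring genuine care is the bookkeeping of the normalization of $d\sigma_I$: one must check that the measure in the Bergman--Fueter transform is precisely the one for which $\int_{\mathbb{B}_I}|r|^{2m}d\sigma_I(r)=1/(m+1)$, so that the orthogonality constant is exactly $1/(k+3)$ and no stray factor of $\pi$ remains. Beyond this, the proof is a routine reproduction of the Fock computation, and I do not anticipate any serious analytic obstacle once the $\mathcal{A}_{Slice}(\mathbb{B})$-convergence of the kernel series has been recorded.
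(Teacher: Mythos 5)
Your proof is correct and follows exactly the route the paper intends: the paper's own proof of this proposition is just the one-line remark that it ``follows with direct computations making use of Proposition \ref{sekb}'', and the computation you carry out (series expansion of $K_{BF}^{\mathbb{B}}$, term-by-term integration, and the orthogonality relation $\int_{\mathbb{B}_I}\bar r^{j+2}r^{k+2}\,d\sigma_I(r)=\delta_{j,k}/(k+3)$) is precisely that direct computation, mirroring Proposition \ref{IR1}~i). Your bookkeeping of the normalization and the cancellation of the factor $k+3$ is accurate, so nothing further is needed.
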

\begin{proof}
This follows with direct computations making use of Proposition \ref{sekb}.
\end{proof}
As a result we get this special identity
\begin{cor}
For any $-1<q<1, I\in\Sq$ and $k\geq 0$, we have $$\displaystyle \int_{\mathbb{B}_I}r^{k}|r|^4 (1-q\bar{r})^{-4}d\lambda_I(r)=\frac{\pi}{6}(k+1)(k+2)q^k,$$ where $I\in\mathbb S$ and $d\lambda_I$ is the Lebesgue measure on $\mathbb{B}_I$.
\end{cor}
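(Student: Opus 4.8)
The plan is to read the desired identity as a specialization of the integral representation in Proposition \ref{IR2} to real arguments $q\in(-1,1)$, where both the polynomials $Q_k$ and the Bergman-Fueter kernel simplify drastically. First I would record the two facts that make the real case elementary. On the one hand, as already observed in the proof of Corollary \ref{qsum}, the identity $\sum_{j=0}^k T^k_j=1$ gives $Q_k(q)=q^k$ for every real $q$. On the other hand, since $R(q,r)=K_{\mathbb{B}}(q,r)=(1-q\bar r)^{-2}$ when $q\in\R$, the factored expression in Theorem \ref{FBKB},
$$K_{BF}^{\mathbb{B}}(q,r)=-4\left[R(q,r)+2K_{\mathbb{B}}(q,r)\right] R(q,r)\overline{r}^2,$$
collapses to $K_{BF}^{\mathbb{B}}(q,r)=-12(1-q\bar r)^{-4}\bar r^2$ for $-1<q<1$.

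Next I would substitute these two simplifications into Proposition \ref{IR2}. Writing $Q_k(q)=q^k$ on the left and inserting the explicit kernel on the right yields
$$q^k=-\frac{1}{2(k+1)(k+2)}\int_{\mathbb{B}_I}\bigl(-12\bigr)(1-q\bar r)^{-4}\,\bar r^2\, r^{k+2}\,d\sigma_I(r).$$
The key algebraic observation is that on a single slice $\C_I$ the variables $r$ and $\bar r$ commute, so $\bar r^2 r^{k+2}=r^{k+2}\bar r^2=r^k(r^2\bar r^2)=|r|^4 r^k$. Carrying out this rearrangement and collecting the numerical constants turns the displayed equation into
$$q^k=\frac{6}{(k+1)(k+2)}\int_{\mathbb{B}_I} r^k |r|^4 (1-q\bar r)^{-4}\,d\sigma_I(r),$$
so that solving for the integral gives the stated value up to the passage between the normalized measure $d\sigma_I$ and the Lebesgue measure $d\lambda_I$.

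The last step is to account for this normalization. Since $d\sigma_I=\tfrac{1}{\pi}d\lambda_I$, multiplying through by $\pi$ produces
$$\int_{\mathbb{B}_I} r^{k}|r|^4 (1-q\bar{r})^{-4}\,d\lambda_I(r)=\frac{\pi}{6}(k+1)(k+2)q^k,$$
which is exactly the claim. I expect no serious obstacle here: everything reduces to restriction to the real axis, and the only points requiring care are the correct specialization of the kernel of Theorem \ref{FBKB} to $q\in\R$ and the bookkeeping of the factor $\pi$ coming from the normalized measure. The commutativity on $\C_I$ is what legitimizes writing $\bar r^2 r^{k+2}=|r|^4 r^k$, and it is the one place where the slice structure is genuinely used.
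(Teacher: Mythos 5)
Your argument is exactly the paper's intended proof: the paper's one-line justification is precisely "apply Proposition \ref{IR2} combined with the expression of the Bergman--Fueter kernel when $-1<q<1$," and you have correctly filled in the details — the specialization $Q_k(q)=q^k$ and $K_{BF}^{\mathbb{B}}(q,r)=-12(1-q\bar r)^{-4}\bar r^2$ for real $q$, the commutativity on $\C_I$ giving $\bar r^2 r^{k+2}=|r|^4r^k$, and the factor $\pi$ from the normalized measure. The proof is correct and the constants check out.
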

\begin{proof}
We only need to apply Proposition \ref{IR2} combined with the expression of the Bergman-Fueter kernel when $-1<q<1$ .

\end{proof}
\subsection{The Bergman-Fueter transform on $\mathbb{H}^+$ and $\mathbb{B}^+$ }
 The next result gives the explicit expression of the Bergman-Fueter kernel on the quaternionic half space $\mathbb{H}^+$:
 \begin{thm}
 For all $(q,r)\in \mathbb{H}^+\times \mathbb{H}^+,$ we have $$K_{BF}^{\mathbb{H}^+}(q,r)=-\displaystyle 4\left[\left(|q|^2+2Re(q)\overline{r}+\overline{r}^2 \right)^{-2}+2(\overline{q}^2+2\bar{q} \bar{r}+\overline{r}^2)\left(|q|^2+2Re(q)\bar{r}+\bar{r}^2 \right)^{-3}\right].$$ Moreover, if we set $$P(q,r):=\left(|q|^2+2Re(q)\overline{r}+\overline{r}^2 \right)^{-1},$$ then $$K_{BF}^{\mathbb{H}^+}(q,r)=-4\left[P(q,r)+2K_{\mathbb{H}^+}(q,r)\right] P(q,r).$$
 \end{thm}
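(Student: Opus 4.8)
The plan is to reduce everything to the already-computed unit-ball Bergman--Fueter kernel of Theorem~\ref{FBKB} by means of a Kelvin-type change of the second variable. By definition the object to compute is $K_{BF}^{\mathbb{H}^+}(q,r)=\Delta_q K_{\mathbb{H}^+}(q,r)$, so I first need the explicit slice Bergman kernel of the half space. Exactly as in the proof of the half-ball kernel, I start from the complex half-plane kernel $K_{\mathbb{C}_J^+}(z,r)=(z+\overline{r})^{-2}$ and produce a candidate slice regular extension in $q$, namely $K_{\mathbb{H}^+}(q,r)=(\overline{q}^2+2\overline{q}\,\overline{r}+\overline{r}^2)\,P(q,r)^2$ with $P(q,r)=(|q|^2+2Re(q)\overline{r}+\overline{r}^2)^{-1}$; one checks that on the slice $\mathbb{C}_J$ this collapses to $(z+\overline{r})^{-2}$ (using $|z|^2+2Re(z)\overline{r}+\overline{r}^2=(z+\overline{r})(\overline{z}+\overline{r})$ and $\overline{z}^2+2\overline{z}\,\overline{r}+\overline{r}^2=(\overline{z}+\overline{r})^2$) and is slice regular in $q$, whence it is the sought kernel by the Identity Principle.

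The key observation is a Kelvin-type relation between the half-space and the ball kernels. On each slice the disk kernel $(1-z\overline{w})^{-2}$ and the half-plane kernel $(z+\overline{r})^{-2}$ are interchanged by $w\mapsto -1/w$: indeed $K_{\mathbb{B}_J}(z,-1/r)\,\overline{r}^{-2}=(1+z\overline{r}^{-1})^{-2}\overline{r}^{-2}=(z+\overline{r})^{-2}$. Since the right-hand side, read in the quaternionic variable $q$, is slice regular in $q$ (the factor $\overline{r}^{-2}$ being a constant placed on the right, which preserves left slice regularity), the Identity Principle upgrades this to the quaternionic identity $K_{\mathbb{H}^+}(q,r)=K_{\mathbb{B}}(q,-1/r)\,\overline{r}^{-2}$. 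As $\Delta_q$ differentiates only in the components of $q$, it commutes both with the substitution $r\mapsto -1/r$ and with right multiplication by the constant $\overline{r}^{-2}$; applying it therefore gives $K_{BF}^{\mathbb{H}^+}(q,r)=K_{BF}^{\mathbb{B}}(q,-1/r)\,\overline{r}^{-2}$, where $K_{BF}^{\mathbb{B}}$ is the rational expression of Theorem~\ref{FBKB} (the identity of Theorem~\ref{FBKB}, being between rational functions, persists by analytic continuation for the argument $-1/r$ even though it may leave $\mathbb{B}$).

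It then remains to substitute and simplify. Setting $r\mapsto -1/r$ in the defining denominators one finds $R(q,-1/r)=P(q,r)\,\overline{r}^2$ and $K_{\mathbb{B}}(q,-1/r)=K_{\mathbb{H}^+}(q,r)\,\overline{r}^2$; here all regroupings use only that the scalars $Re(q),|q|^2$ commute with everything and that $P$ commutes with $\overline{r}$ (its inverse being a polynomial in $\overline{r}$ with scalar coefficients), while the order of the non-commuting factors $\overline{q},\overline{r}$ is preserved throughout. Feeding these into $K_{BF}^{\mathbb{B}}(q,-1/r)=-4\bigl[R(q,-1/r)+2K_{\mathbb{B}}(q,-1/r)\bigr]R(q,-1/r)\,\overline{(-1/r)}^2$ and using $\overline{(-1/r)}^2=\overline{r}^{-2}$, the powers of $\overline{r}$ telescope and the trailing $\overline{r}^{-2}$ cancels, leaving exactly $K_{BF}^{\mathbb{H}^+}(q,r)=-4\bigl[P(q,r)+2K_{\mathbb{H}^+}(q,r)\bigr]P(q,r)$. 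Expanding $K_{\mathbb{H}^+}=(\overline{q}^2+2\overline{q}\,\overline{r}+\overline{r}^2)P^2$ then reproduces the two-term expression in the statement.

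The main obstacle is the non-commutative bookkeeping: because $P^{-1}=|q|^2+2Re(q)\overline{r}+\overline{r}^2$ and $\overline{q}^2+2\overline{q}\,\overline{r}+\overline{r}^2$ are genuinely quaternion-valued and $\overline{q},\overline{r}$ do not commute, one must keep every factor on its correct side when regrouping, and one must justify the passage from the slicewise identity to the full quaternionic one (Identity Principle) together with the rational-function continuation licensing the argument $-1/r$. A completely self-contained alternative, avoiding the transfer altogether, is to differentiate $K_{\mathbb{H}^+}=(\overline{q}^2+2\overline{q}\,\overline{r}+\overline{r}^2)P^2$ directly, using $\Delta|q|^2=8$, $\Delta\,Re(q)=0$, $\Delta\overline{q}^2=-4$ and $\Delta(\overline{q}\,\overline{r})=0$ together with the Leibniz rule for $\Delta$; this is the route parallel to the unit-ball computation, but it is heavier precisely because of the same ordering issues.
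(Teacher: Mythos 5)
Your proof is correct, but it takes a genuinely different route from the paper's. The paper argues by brute force: it quotes the explicit slice Bergman kernel $K_{\mathbb{H}^+}(q,r)=\frac{1}{\pi}\left(\overline{q}^2+2\overline{q}\,\overline{r}+\overline{r}^2\right)\left(|q|^2+2Re(q)\overline{r}+\overline{r}^2\right)^{-2}$ from \cite{CGS2015} and computes $\Delta_q$ of it directly, writing out $\frac{\partial^2}{\partial x_0^2}$ and $\frac{\partial^2}{\partial x_1^2}$ in full, asserting the analogous expressions for $x_2,x_3$, and collecting terms; this is exactly the ``heavier'' alternative you mention at the end. You instead transfer Theorem \ref{FBKB} from the ball via the inversion $r\mapsto -1/r$ in the second variable. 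I checked the non-commutative bookkeeping: $P$ does commute with $\overline{r}$ (its inverse is a polynomial in $\overline{r}$ with the real coefficients $|q|^2$ and $2Re(q)$), the identities $R(q,-1/r)=P(q,r)\overline{r}^{2}$ and $K_{\mathbb{B}}(q,-1/r)=K_{\mathbb{H}^+}(q,r)\overline{r}^{2}$ hold with the stated orderings, and the powers of $\overline{r}$ telescope to give $-4\left[P+2K_{\mathbb{H}^+}\right]P$. What your approach buys is economy and a conceptual link between the two kernels, with no second derivatives at all; what it costs is the two justifications the paper never needs and which you correctly flag: the Identity Principle upgrade from the slice $\mathbb{C}_J$ (legitimate, since the denominator $|q|^2+2Re(q)\overline{r}+\overline{r}^2$ has no zeros for $q,r\in\mathbb{H}^+$, as its quaternionic roots in $\overline{r}$ lie on the sphere $[-q]$, which has negative real part), and the continuation of the rational identity of Theorem \ref{FBKB} to arguments $(q,-1/r)$ outside $\mathbb{B}\times\mathbb{B}$, which holds because two rational functions of the eight real coordinates that agree on a nonempty open set agree wherever both are defined. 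One cosmetic remark: the paper's cited formula for $K_{\mathbb{H}^+}$ carries a factor $\frac{1}{\pi}$ that the theorem statement drops; your normalization matches the statement, and that discrepancy is internal to the paper, not a defect of your argument.
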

\begin{proof}
First, note that by Theorem 4.4 in \cite{CGS2015} we have $$K_{\mathbb{H}^+}(q,r)=\displaystyle\frac{1}{\pi}\left(\overline{q}^2+2\bar{q} \bar{r}+\overline{r}^2\right)\left(|q|^2+2Re(q)\overline{r}+\overline{r}^2 \right)^{-2}.$$
However, the Bergman-Fueter kernel is obtained by computing the Laplacian of the slice Bergman kernel with respect to the variable $q$, so that we have $$K_{BF}^{\mathbb{H}^+}(q,r):=\Delta K_{\mathbb{H}^+}(q,r),\qquad \textbf{ }\forall (q,r)\in \mathbb{H}^+\times \mathbb{H}^+.$$ Then, direct computations using the formula of $K_{\mathbb{H}^+}(q,r)$ show that
\[
\begin{split}
\displaystyle \frac{d^2}{dx_0^2}K_{\mathbb{H}^+}(q,r)&= 2(|q|^2+2Re(q)\bar{r}+\bar{r}^2)^{-2}-16(\bar{q}+\bar{r})(x_0+\bar{r})(|q|^2+2Re(q)\bar{r}+\bar{r}^2)^{-3} \\
&
-4(\bar{q}^2+2\bar{q}\bar{r}+\bar{r}^2)(|q|^2+2Re(q)\bar{r}+\bar{r}^2)^{-3}+24(\bar{q}^2+2\bar{q}\bar{r}+\bar{r}^2)(x_0+\bar{r})^2 \\& \times (|q|^2+2Re(q)\bar{r}+\bar{r}^2)^{-4}
\end{split}
\]
and also
\[
\begin{split}
\displaystyle \frac{d^2}{dx_1^2}K_{\mathbb{H}^+}(q,r)&= -2(|q|^2+2Re(q)\bar{r}+\bar{r}^2)^{-2}+8x_1(\bar{q}i+i\bar{q}+2i\bar{r})(|q|^2+2Re(q)\bar{r}+\bar{r}^2)^{-3} \\
&
-4(\bar{q}^2+2\bar{q}\bar{r}+\bar{r}^2)(|q|^2+2Re(q)\bar{r}+\bar{r}^2)^{-3}+24x_1^2(\bar{q}^2+2\bar{q}\bar{r}+\bar{r}^2)\\& \times (|q|^2+2Re(q)\bar{r}+\bar{r}^2)^{-4}.
\end{split}
\]

Similarly we calculate $\displaystyle \frac{d^2}{dx_2^2}K_{\mathbb{H}^+}(q,r)$ and $\displaystyle \frac{d^2}{dx_3^2}K_{\mathbb{H}^+}(q,r)$. Then, with some computations, we get $$K_{BF}^{\mathbb{H}^+}(q,r)=-\displaystyle\frac{4}{\pi}\left[\left(|q|^2+2Re(q)\bar{r}+\bar{r}^2 \right)^{-2}+2(\bar{q}^2+2\bar{q} \bar{r}+\bar{r}^2)\left(|q|^2+2Re(q)\bar{r}+\bar{r}^2 \right)^{-3}\right].$$
Finally, by replacing the function $P(q,r)$ in the previous formula we obtain $$K_{BF}^{\mathbb{H}^+}(q,r)=-4\left[P(q,r)+2K_{\mathbb{H}^+}(q,r)\right] P(q,r).$$

\end{proof}

 \begin{prop}
 The Bergman-Fueter kernel $K_{BF}^{\mathbb{H}^+}(q,r)$ is Fueter regular in $q$ and slice anti-regular in $r$ on $\mathbb{H}^+$.
  \end{prop}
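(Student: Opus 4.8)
The plan is to establish the two regularity claims separately, following the pattern already used for the Fock-Fueter kernel. First I would handle the Fueter regularity in $q$: by definition $K_{BF}^{\mathbb{H}^+}(q,r)=\Delta_q K_{\mathbb{H}^+}(q,r)=\tau_q K_{\mathbb{H}^+}(q,r)$, where the Laplacian acts with respect to the variable $q$. For every fixed $r$ the slice Bergman kernel $K_{\mathbb{H}^+}(\cdot,r)$ is slice hyperholomorphic in $q$ — it is the reproducing kernel of $\mathcal{A}_{Slice}(\mathbb{H}^+)$, and this is also transparent from its explicit formula. Hence the Fueter mapping theorem applies directly and gives $\tau_q K_{\mathbb{H}^+}(\cdot,r)\in\mathcal{R}(\mathbb{H}^+)$, that is, the kernel is Fueter regular in $q$.

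For the slice anti-regularity in $r$ I would argue that the property is inherited from $K_{\mathbb{H}^+}$ under the action of $\Delta_q$. The slice Bergman kernel $K_{\mathbb{H}^+}(q,r)$ is slice anti-regular in its second variable (a general feature of slice hyperholomorphic reproducing kernels, and also visible from its expression as a function of $\overline{r}$ and $\overline{r}^2$). The decisive point is that $\Delta_q$ differentiates only the $q$-coordinates, so it commutes with the slice Cauchy-Riemann operator acting on the $r$-coordinates. Writing $\overline{\partial_I}^{(r)}$ for the latter, one then has
$$\overline{\partial_I}^{(r)}K_{BF}^{\mathbb{H}^+}(q,r)=\overline{\partial_I}^{(r)}\Delta_q K_{\mathbb{H}^+}(q,r)=\Delta_q\,\overline{\partial_I}^{(r)}K_{\mathbb{H}^+}(q,r)=0,$$
so the anti-regularity passes to $K_{BF}^{\mathbb{H}^+}$.

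Alternatively, and more in the spirit of the Fock-Fueter proof, I would expand $K_{BF}^{\mathbb{H}^+}(q,r)$ as a power series in $\overline{r}$ whose coefficients are Fueter regular functions of $q$, and then invoke the series expansion theorem for (anti-)slice hyperholomorphic functions to conclude anti-regularity in $r$ term by term. Either route reduces the statement to the already established structure of the kernel.

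I do not expect a genuine obstacle in either part, since both rely only on tools already in place: the Fueter mapping theorem for the $q$-regularity, and the elementary commutation of $\Delta_q$ with the $r$-Cauchy-Riemann operator (they act on disjoint variables) for the $r$-anti-regularity. The only point demanding a little care is to record cleanly, before differentiating in $q$, that $K_{\mathbb{H}^+}$ is slice anti-regular in $r$.
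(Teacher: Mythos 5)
Your proof is correct, and the first half (Fueter regularity in $q$ via the Fueter mapping theorem applied to the slice regular kernel $K_{\mathbb{H}^+}(\cdot,r)$) is exactly the paper's argument. For the anti-regularity in $r$ you diverge: you observe that $\Delta_q$ is a real-coefficient operator acting only on the $q$-coordinates, hence commutes with the slice Cauchy--Riemann operator $\overline{\partial_I}^{(r)}$ on each slice, so the anti-regularity of $K_{\mathbb{H}^+}$ in $r$ is transported directly to $K_{BF}^{\mathbb{H}^+}=\Delta_q K_{\mathbb{H}^+}$. The paper instead reads the property off the explicit closed form $K_{BF}^{\mathbb{H}^+}=-4\bigl[P(q,r)+2K_{\mathbb{H}^+}(q,r)\bigr]P(q,r)$, noting that $P^{-1}(q,r)=|q|^2+2\mathrm{Re}(q)\overline{r}+\overline{r}^2$ is anti-slice regular in $r$ \emph{with real coefficients}, so that $P$ is too and pointwise products with it preserve anti-slice regularity -- a step that is needed precisely because products of slice regular functions are not slice regular in general. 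Your commutation argument is more structural and avoids both the explicit formula and that intrinsic-coefficient subtlety; it would apply verbatim to any of the domains in the paper. The paper's route has the minor virtue of doubling as a consistency check on the computed formula. One caveat: your ``alternative'' via a power series in $\overline{r}$, which mirrors the Fock and unit-ball cases, does not transfer cleanly to $\mathbb{H}^+$, since the half-space kernel has no globally convergent expansion in powers of $\overline{r}$ about the origin; stick with the commutation argument as the main route.
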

 \begin{proof}
 Note that on the one hand the Fueter mapping theorem implies that $K_{BF}^{\mathbb{H}^+}(q,r)$ is Fueter regular in $q$ since $K_{\mathbb{H}^+}$ is slice regular in $q$. On the other hand, the function $P^{-1}(q,r)$ is an anti-slice regular function  with real coefficients with respect to $r$ and so is the function $P(q,r)$. Finally, the result follows since $K_{\mathbb{H}^+}$ is also anti-slice regular in $r$.
  \end{proof}
Concerning the Fueter-Bergman kernel of the quaternionic half unit ball $\mathbb{B}^+$ we have the following:
\begin{thm}
 For all $(q,r)\in \mathbb{B}^+\times \mathbb{B}^+,$ the following formula holds $$K_{BF}^{\mathbb{B}^+}(q,r)=K_{BF}^{\mathbb{B}}(q,r)+K_{BF}^{\mathbb{H}^+}(q,r).$$ Furthermore, the Bergman-Fueter kernel $K_{BF}^{\mathbb{B}^+}$ is Fueter regular in $q$ and slice anti-regular in $r$ on $\mathbb{B}^+$.
 \end{thm}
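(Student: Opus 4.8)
The plan is to reduce everything to the additive decomposition of the slice Bergman kernel already established in Theorem \ref{KB+}, together with the linearity of the Laplacian. Since by definition $K_{BF}^{\mathbb{B}^+}(q,r)=\Delta K_{\mathbb{B}^+}(q,r)$ with the Laplacian taken in the variable $q$, and since Theorem \ref{KB+} gives $K_{\mathbb{B}^+}(q,r)=K_{\mathbb{B}}(q,r)+K_{\mathbb{H}^+}(q,r)$ for all $(q,r)\in\mathbb{B}^+\times\mathbb{B}^+$, I would simply apply $\Delta$ to both sides. Because $\Delta$ is $\R$-linear and acts only on the $q$ variable, this yields
$$K_{BF}^{\mathbb{B}^+}(q,r)=\Delta K_{\mathbb{B}}(q,r)+\Delta K_{\mathbb{H}^+}(q,r)=K_{BF}^{\mathbb{B}}(q,r)+K_{BF}^{\mathbb{H}^+}(q,r),$$
which is the asserted formula. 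The only point to record is that the containments $\mathbb{B}^+\subset\mathbb{B}$ and $\mathbb{B}^+\subset\mathbb{H}^+$ guarantee that both kernels on the right, and hence their Laplacians, are well defined on $\mathbb{B}^+\times\mathbb{B}^+$.

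For the regularity in $q$, I would invoke the Fueter mapping theorem directly: the slice Bergman kernel $K_{\mathbb{B}^+}(\cdot,r)$ is slice regular in $q$ for each fixed $r$, so its image under $\tau_q=\Delta$ is Fueter regular in $q$. Alternatively, the same conclusion follows from the additive formula, since $K_{BF}^{\mathbb{B}}=\tau_q K_{\mathbb{B}}$ and $K_{BF}^{\mathbb{H}^+}=\tau_q K_{\mathbb{H}^+}$ are each Fueter regular in $q$ by the Fueter mapping theorem, and the space $\mathcal{R}(\mathbb{B}^+)$ of Fueter regular functions is a right $\Hq$-module closed under sums.

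The slice anti-regularity in $r$ I would likewise obtain termwise from the decomposition. Proposition \ref{sekb} exhibits $K_{BF}^{\mathbb{B}}(q,r)$ as a convergent power series in $\bar{r}$, namely $-2\sum_{k\geq 0}(k+1)(k+2)(k+3)Q_k(q)\bar{r}^{k+2}$, so by the series expansion theorem for slice hyperholomorphic functions it is slice anti-regular in $r$; the kernel $K_{BF}^{\mathbb{H}^+}(q,r)$ is slice anti-regular in $r$ by the Proposition established just above. Since slice anti-regularity is preserved under finite sums on the common domain $\mathbb{B}^+$, the sum $K_{BF}^{\mathbb{B}^+}$ inherits this property. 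I do not expect a genuine obstacle here: the whole statement is a formal consequence of the linearity of $\Delta$ and of the regularity results already proved for the two constituent kernels; the only care needed is the bookkeeping that ensures every object is restricted to, and the relevant properties hold on, the half-ball $\mathbb{B}^+$.
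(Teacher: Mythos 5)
Your argument is correct and is essentially the paper's own proof: the paper likewise obtains the additive formula by applying the definition $K_{BF}^{\mathbb{B}^+}=\Delta K_{\mathbb{B}^+}$ to the decomposition of Theorem \ref{KB+}, and deduces the regularity in $q$ and anti-regularity in $r$ from the corresponding properties of $K_{BF}^{\mathbb{B}}$ and $K_{BF}^{\mathbb{H}^+}$ on the common domain $\mathbb{B}^+$. You simply spell out the linearity of $\Delta$ and the termwise regularity checks that the paper leaves implicit.
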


\begin{proof}
For the first statement, we only need to use the result obtained in Theorem \ref{KB+} combined with the definition of the Fueter-Bergman kernel. Then, since $\mathbb{B}^+$ is contained in both of $\mathbb{B}$ and $\mathbb{H}^+$, we have that $K_{BF}^{\mathbb{B}^+}(q,r)$ is Fueter regular in $q$ and slice anti-regular in $r$ as the sum of $K_{BF}^{\mathbb{B}}(q,r)$ and $K_{BF}^{\mathbb{H}^+}(q,r)$.
\end{proof}
\noindent{\bf Acknowledgements} \\ \\
Kamal Diki acknowledges the support of the project INdAM Doctoral Programme in Mathematics and/or Applications Cofunded by Marie Sklodowska-Curie Actions, acronym: INdAM-DP-COFUND-2015, grant number: 713485.


\end{document}